\theoremstyle{plain}
\newtheorem{theorem}{Theorem}[section]
\newtheorem{lemma}[theorem]{Lemma}     
\newtheorem{proposition}[theorem]{Proposition}
\theoremstyle{definition}
\newtheorem{definition}[theorem]{Definition}
\theoremstyle{remark}
\newtheorem{remark}[theorem]{Remark}
\newtheorem{example}[theorem]{Example}
\DeclareMathOperator{\Pic}{Pic}
\DeclareMathOperator{\Hom}{Hom}
\DeclareMathOperator{\height}{ht}
\DeclareMathOperator{\supp}{supp}
 \newcommand{\calL}{\mathcal L}
\newcommand{\calO}{\mathcal O} 
\newcommand{\calX}{\mathcal X}
\newcommand{\mC}{\mathbb C} \newcommand{\mN}{\mathbb N}
\newcommand{\mP}{\mathbb P} \newcommand{\mQ}{\mathbb Q}
 \newcommand{\mZ}{\mathbb Z}
\newcommand{\gog}{\mathfrak g}
\newcommand{\gok}{\mathfrak k}
\newcommand{\gop}{\mathfrak p}
\newcommand{\sfA}{\mathsf A} \newcommand{\sfB}{\mathsf B}
\newcommand{\sfC}{\mathsf C} \newcommand{\sfD}{\mathsf D}
\newcommand{\gra}{\alpha} \newcommand{\grb}{\beta}    \newcommand{\grg}{\gamma}
\newcommand{\grd}{\delta} \newcommand{\grl}{\lambda}  \newcommand{\grs}{\sigma}
 \newcommand{\grf}{\varphi}\newcommand{\gro}{\omega}
\newcommand{\grG}{\Gamma} \newcommand{\grD}{\Delta}  
 \newcommand{\grS}{\Sigma}
\newcommand{\lra}        {\longrightarrow}
\newcommand{\isocan}     {\simeq}
\newcommand{\vuoto}      {\varnothing}
\renewcommand{\geq}      {\geqslant}
\renewcommand{\leq}      {\leqslant}
\newcommand{\senza}      {\smallsetminus}
\newcommand{\ol}         {\overline}
\newcommand{\wt}         {\widetilde}
\newsavebox{\kdwzero}
\newsavebox{\kdwone}
\newsavebox{\kdwtwo}
\savebox{\kdwzero}{\put(-240,240){0}}
\savebox{\kdwone}{\put(-240,240){1}}
\savebox{\kdwtwo}{\put(-240,240){2}}
\title[Spherical nilpotent orbits in complex symmetric pairs]{Regular functions on spherical nilpotent\\orbits in complex symmetric pairs:\\classical non-Hermitian cases}
\email{bravi@mat.uniroma1.it}
\curraddr{\textsc{Dipartimento di Matematica\\ Sapienza Universit\`a di Roma\\ 
Piazzale Aldo Moro 5\\ 00185 Roma, Italy}}
\email{rocco.chirivi@unisalento.it}
\curraddr{\textsc{Dipartimento di Matematica e Fisica "Ennio De Giorgi"\\
Via per Arnesano\\ 73047 Monteroni di Lecce (LE), Italy}}
\email{jacopo.gandini@sns.it}
\curraddr{\textsc{Scuola Normale Superiore\\
Piazza dei Cavalieri 7\\ 56126 Pisa, Italy}}
\author{Paolo Bravi, Rocco Chiriv\`\i, Jacopo Gandini}
\begin{document}

\begin{abstract}
Given a classical semisimple complex algebraic group $G$ and a symmetric pair $(G,K)$ of non-Hermitian type, we study the closures of the spherical nilpotent $K$-orbits in the isotropy representation of $K$. For all such orbit closures, we study the normality and we describe the $K$-module structure of the ring of regular functions of the normalizations.
\end{abstract}

\maketitle

\section*{Introduction}
Let $G$ be a connected semisimple complex algebraic group, and let $K$ be the fixed point subgroup of an algebraic involution $\theta$ of $G$. Then $K$ is a reductive group, which is connected if $G$ is simply-connected.

The Lie algebra $\mathfrak g$ of $G$ splits into the sum of eigenspaces of $\theta$, 
\[\mathfrak g=\mathfrak k \oplus \mathfrak p,\]
where the Lie algebra $\mathfrak k$ of $K$ is the eigenspace of eigenvalue 1, and $\mathfrak p$ is the eigenspace of eigenvalue $-1$. The adjoint action of $G$ on $\mathfrak g$, once restricted to $K$, leaves $\mathfrak k$ and $\mathfrak p$ stable. 

Therefore $\mathfrak p$ provides an interesting representation of $K$, called the isotropy representation, where one may want to study the geometry of the $K$-orbits. With this aim, one looks at the so-called nilpotent cone $\mathcal N_\mathfrak p \subset \mathfrak p$, which consists of the elements whose $K$-orbit closure contains the origin. In this case, $\mathcal N_\mathfrak p$ actually consists of the nilpotent elements of $\mathfrak g$ which belong to $\mathfrak p$. By a fundamental result of Kostant and Rallis \cite{KR}, as in the case of the adjoint action of $G$ on $\mathfrak g$, there are finitely many nilpotent $K$-orbits in $\mathfrak p$.

Provided $K$ is connected, we restrict our attention to the spherical nilpotent $K$-orbits in $\mathfrak p$. Here spherical means with an open orbit for a Borel subgroup of $K$, or equivalently with a ring of regular functions which affords a multiplicity-free representation of $K$. The classification of these orbits is known and due to King \cite{Ki04}.

In the present paper, we begin a systematic study of the closures of the spherical nilpotent $K$-orbits in $\mathfrak p$. In particular, we analyze their normality, and describe the $K$-module structure of the coordinate rings of their normalizations. This is done by making use of the technical machinery of spherical varieties, which is recalled in Section~1.


Here we will deal with the case where $(G,K)$ is a classical symmetric pair with $K$ semisimple, the other cases will be treated in forthcoming papers. The semisimplicity of $K$ is equivalent to the fact that $\mathfrak p$ is a simple $K$-module, in which case $G/K$ is also called a symmetric space of non-Hermitian type.

Let $G_{\mathbb R}$ be a real form of $G$ with Lie algebra $\mathfrak g_\mathbb R$ and Cartan decomposition $\mathfrak g_\mathbb R = \mathfrak k_\mathbb R +\mathfrak p_\mathbb R$,  so that $\theta$ is induced by the corresponding Cartan involution of $G_{\mathbb R}$. Then $K$ is the complexification of a maximal compact subgroup $K_{\mathbb R} \subset G_{\mathbb R}$, and the Kostant-Sekiguchi-\makebox[0pt]{\rule{3pt}{0pt}\rule[4pt]{3pt}{0.8pt}}Dokovi\'c correspondence \cite{D87,S87} establishes a bijection between the set of the nilpotent $G_{\mathbb R}$-orbits in $\mathfrak g_{\mathbb R}$ and the set of the nilpotent $K$-orbits in $\mathfrak p$. Let us briefly recall how it works, more details and references can be found in \cite{CoMG}.


Every non-zero nilpotent element $e\in\mathfrak g_\mathbb R$ lies in an $\mathfrak{sl}(2)$-triple $\{h,e,f\}\subset\mathfrak g_\mathbb R$. Every $\mathfrak{sl}(2)$-triple $\{h,e,f\}\subset\mathfrak g_\mathbb R$ is conjugate to a \textit{Cayley triple} $\{h',e',f'\}\subset\mathfrak g_\mathbb R$, that is, an $\mathfrak{sl}(2)$-triple with $\theta(h')=-h'$, $\theta(e')=-f'$ and $\theta(f')=-e'$. To a Cayley triple in $\mathfrak g_\mathbb R$ one can associate its \textit{Cayley transform}
\[\{h,e,f\}\mapsto \{i(e-f), \tfrac12 (e+f+ih), \tfrac12 (e+f-ih)\}:\]
this is a normal triple in $\mathfrak g$, that is, an $\mathfrak{sl}(2)$-triple $\{h',e',f'\}$ with $h'\in\mathfrak k$ and $e',f'\in\mathfrak p$. By \cite{KR}, any non-zero nilpotent element $e\in\mathfrak p$ lies in a normal triple $\{h,e,f\}\subset\mathfrak g$, and any two normal triples with the same nilpositive element $e$ are conjugated under $K$. Then the desired bijective correspondence is constructed as follows: let $\mathcal O \subset \mathfrak g_\mathbb R$ be an adjoint nilpotent orbit, choose an element $e \in \mathcal O$ belonging to a Cayley triple $\{h,e,f\}$, consider its Cayley transform $\{h',e',f'\}$ and let $\mathcal O' = Ke'$: then $\mathcal O' \subset \mathfrak p$ is the nilpotent $K$-orbit corresponding to $\mathcal O$. 

Among the nice geometrical properties of the Kostant-Sekiguchi-\makebox[0pt]{\rule{3pt}{0pt}\rule[4pt]{3pt}{0.8pt}}Dokovi\'c correspondence, we just recall here one result concerning sphericality: the spherical nilpotent $K$-orbits in $\mathfrak p$ correspond to the adjoint nilpotent $G_\mathbb R$-orbits in $\mathfrak g_\mathbb R$ which are multiplicity free as Hamiltonian $K_\mathbb R$-spaces \cite{Ki02}.


In accordance with the philosophy of the orbit method (see e.g.\ \cite{AHV}), the unitary representations of $G_{\mathbb R}$ should be parametrized by the (co-)adjoint orbits of $G_{\mathbb R}$. In particular one is interested in the so-called unipotent representations of $G_{\mathbb R}$, namely those which should be attached to nilpotent orbits.
The $K$-module structure of the ring of regular functions on a nilpotent $K$-orbit in $\mathfrak p$ (which we compute in our spherical cases) should give information on the corresponding unitary representation of $G_\mathbb R$. Unitary representations that should be attached to the spherical nilpotent $K$-orbits are studied in \cite{HL} (when $G$ is a classical group) and \cite{Sab} (when $G$ is the special linear group). When $G$ is the symplectic group, for particular spherical nilpotent $K$-orbits, such representations are constructed in \cite{Wo} and \cite{Ya}.

The normality and the $K$-module structure of the coordinate ring of the closure of a spherical nilpotent $K$-orbit in $\mathfrak p$ have been studied in several particular cases, with different methods, by Nishiyama \cite{Ni1}, \cite{Ni2}, by Nishiyama, Ochiai and Zhu \cite{NOZ}, and by Binegar \cite{Bi}.


In Appendix \ref{A} we report the list of the spherical nilpotent $K$-orbits in $\mathfrak p$ for all symmetric pairs $(\mathfrak g,\mathfrak k)$ of classical non-Hermitian type.

In the classical cases, the adjoint nilpotent orbits in real simple algebras are classified in terms of signed partitions, as explained in \cite[Chapter~9]{CoMG}. In the list, every orbit is labelled with its corresponding signed partition.

For every orbit we provide an explicit description of a representative $e\in\mathfrak p$, as element of a normal triple $\{h,e,f\}$, and the centralizer of $e$, which we denote by $K_e$. All these data can be directly computed using King's paper on the classification of the spherical nilpotent $K$-orbits \cite{Ki04} (but we point out a missing case therein, see Remark~\ref{remark:missingcase}).

The first datum which is somewhat new in this work is the Luna spherical system associated with $\mathrm N_K(K_e)$, the normalizer of $K_e$ in $K$, which is a wonderful subgroup of $K$. It is equal to $K_{[e]}$, the stabilizer of the line through $e$,  and notice that $K_{[e]}/K_e\cong\mathbb C^\times$.

The Luna spherical systems are used to deduce the normality or non-normality of the $K$-orbits, and to compute the corresponding $K$-modules of regular functions.

Appendix \ref{B} consists of two sets of tables, where we summarize our results on the spherical nilpotent $K$-orbits in $\mathfrak p$. Given such an orbit $\mathcal O = Ke$, in the first set (Tables~2--11) we describe the normality of its closure $\overline{\mathcal O}$, and if $\widetilde{\mathcal O} \lra \overline{\mathcal O}$ denotes the normalization, we describe the $K$-module structure of $\mathbb C[\widetilde{\mathcal O}]$ by giving a set of generators of its weight semigroup $\grG(\widetilde{\mathcal O})$ (that is, the set of the highest weights occurring in $\mathbb C[\widetilde{\mathcal O}]$). The second set (Tables~12--20) contains the Luna spherical systems of $\mathrm N_K(K_e)$.

In Section~1 we compute the Luna spherical systems. In Section~2 we study the multiplication of sections of globally generated line bundles on the corresponding wonderful varieties, which turns out to be always surjective in all cases except one. In Section~3 we deduce our results on normality and semigroups. 
  
\subsection*{Notation}

Simple roots of irreducible root systems are denoted by $\alpha_1,\alpha_2,\ldots$ and enumerated as in Bourbaki, when belonging to different irreducible components they are denoted by $\alpha_1,\alpha_2,\ldots$, $\alpha'_1,\alpha'_2,\ldots$, $\alpha''_1,\alpha''_2,\ldots$, and so on.  For the fundamental weights we adopt the same convention, they are denoted by $\omega_1,\omega_2,\ldots$, $\omega'_1,\omega'_2,\ldots$, $\omega''_1,\omega''_2,\ldots$, and so on. In the tables for the orthogonal cases at the end of the paper we use a variation of the fundamental weights $\varpi_1,\varpi_2,\ldots$ which is explained in Appendix~\ref{B}. 

By $V(\lambda)$ we denote the simple module of highest weight $\lambda$, the acting group will be clear from the context.

\section{Spherical systems}

In this section we compute the Luna spherical systems given in the tables at the end of the paper, in Appendix~\ref{B}. 

First, let us briefly explain what a Luna spherical system is, see e.g.\ \cite{BL} for a plain introduction.

\subsection{Luna spherical systems} 

Recall that a subgroup $H$ of $K$ is called wonderful if the homogeneous space $K/H$ admits an open equivariant embedding in a wonderful $K$-variety. 
A $K$-variety is called wonderful if it is smooth, complete, with an open $K$-orbit whose complement is union of $D_1,\ldots,D_r$ smooth prime $K$-stable divisors with non-empty transversal crossings such that two points $x,x'$ lie in the same $K$-orbit if and only if
\[\{i : x\in D_i\}=\{i : x'\in D_i\}.\]
The wonderful embedding of $K/H$ is unique up to equivariant isomorphism, and is a projective spherical $K$-variety. The number $r$ of the prime $K$-stable divisors is called the rank of $X$.

Let us fix, inside $K$, a maximal torus $T$ and a Borel subgroup $B$ containing $T$. 
This choice yields a root system $R$ and a set of simple roots $S$ in $R$. Let us also denote by $(\,,\,)$ the scalar product in the Euclidean space spanned by $R$, by $\alpha^\vee$ the coroot associated with $\alpha$, and by $\langle\,,\,\rangle$ the usual Cartan pairing 
\[\langle\alpha^\vee,\lambda\rangle=2\frac{(\alpha,\lambda)}{(\alpha,\alpha)}.\]  

For any spherical $K$-variety $X$, the set of colors, which is denoted by $\Delta_X$, is the set of prime $B$-stable non-$K$-stable divisors of $X$. 
It is a finite set. In our case, if $X$ is the wonderful embedding of $K/H$, the colors of $K/H$ are just the irreducible components of the complement of the open $B$-orbit, and the colors of $X$ are just the closures of the colors of $K/H$, so that the two sets $\Delta_X$ and $\Delta_{K/H}$ are naturally identified.

For any spherical $K$-variety $X$ one can also define another finite set, the set of spherical roots, usually denoted by $\Sigma_X$. Here we recall its definition only in the wonderful case. Suppose $X$ is the wonderful embedding of $K/H$. By definition $X$ contains a unique closed $K$-orbit, therefore every Borel subgroup of $K$ fixes in $X$ a unique point. Let us call $z$ the point fixed by $B^-$, the opposite of the Borel subgroup $B$. For all $K$-stable prime divisors $D_i$, let $\sigma_i$ be $T$-eigenvalue occurring in the normal space of $D_i$ at $z$
\[\frac{\mathrm T_zX}{\mathrm T_zD_i}.\]
Then the set of spherical roots is the set $\Sigma_X=\{\sigma_1,\ldots,\sigma_r\}$, also denoted by $\Sigma_{K/H}$. The spherical roots are linearly independent and the corresponding reflections
\[\gamma\mapsto\gamma-2\frac{(\sigma_i,\gamma)}{(\sigma_i,\sigma_i)}\sigma_i\]
generate a finite group of orthogonal transformations which is called the little Weyl group of $X$.
In our case, in which the center of $K$ acts trivially, the spherical roots are elements of $\mathbb NS$, that is, linear combinations with non-negative integer coefficients of simple roots.

The Picard group of a wonderful variety $X$ is freely generated by the equivalence classes of the colors of $X$. 
Expressing the classes of the $K$-stable divisors in terms of the basis given by the classes of colors
\[[D_i]=\sum_{D\in\Delta_{K/H}}c_{K/H}(D,\sigma_i)[D]\]
we get a $\mathbb Z$-bilinear pairing, which is also called Cartan pairing,
\[c_{K/H}\colon\mathbb Z\Delta_{K/H}\times\mathbb Z\Sigma_{K/H}\to\mathbb Z.\] 
It is known to satisfy quite strong restrictions, as follows.

For any simple root $\alpha\in S$, the set of colors moved by $\alpha$, which is denoted by $\Delta_{K/H}(\alpha)$, is the set of colors that are not stable under the action of the minimal parabolic subgroup $P_{\{\alpha\}}$. Any simple root $\alpha$ moves at most two colors, and more precisely there are exactly four cases:
\begin{itemize}
\item[Case {\bf p})] $\alpha$ moves no colors;
\item[Case {\bf a})] $\alpha$ moves two colors, this happens if and only if $\alpha\in\Sigma_{K/H}$, and in this case we have 
\begin{enumerate}
\item $\Delta_{K/H}(\alpha)=\{D\in\Delta_{K/H}:c_{K/H}(D,\alpha)=1\}$,
\item $c_{K/H}(D,\sigma)\leq1$ for all $D\in\Delta_{K/H}(\alpha)$ and $\sigma\in\Sigma_{K/H}$, 
\item $\sum_{D\in\Delta_{K/H}(\alpha)}c_{K/H}(D,\sigma)=\langle\alpha^\vee,\sigma\rangle$ for all $\sigma\in\Sigma_{K/H}$;
\end{enumerate}
\item[Case {\bf 2a})] $\alpha$ moves one color and $2\alpha\in\Sigma_{K/H}$, in this case if $D\in\Delta_{K/H}(\alpha)$ 
we have $c_{K/H}(D,\sigma)=\frac12\langle\alpha^\vee,\sigma\rangle$ for all $\sigma\in\Sigma_{K/H}$;  
\item[Case {\bf b})] $\alpha$ moves one color and $2\alpha\not\in\Sigma_{K/H}$, in this case if $D\in\Delta_{K/H}(\alpha)$
we have $c_{K/H}(D,\sigma)=\langle\alpha^\vee,\sigma\rangle$ for all $\sigma\in\Sigma_{K/H}$.
\end{itemize}

The set of simple roots moving no colors is denoted by $S^\mathrm p_{K/H}$. 

The set of colors $\Delta_{K/H}$ is a disjoint union of subsets $\Delta_{K/H}^\mathrm a$, $\Delta_{K/H}^\mathrm{2a}$, $\Delta_{K/H}^\mathrm b$ 
which consist of colors moved by simple roots of type (a), (2a), (b), respectively. The set $\Delta_{K/H}^\mathrm a$ is also denoted by $\mathrm A_{K/H}$.
\begin{itemize}
\item[Case {\bf a})] A color in $\mathrm A_{K/H}$ may be moved by several simple roots.
\item[Case {\bf 2a})] A color in $\Delta_{K/H}^\mathrm{2a}$ is moved by a unique simple root.
\item[Case {\bf b})] A color in $\Delta_{K/H}^\mathrm b$ may be moved by at most two simple roots, 
in this case two simple roots $\alpha$ and $\beta$ move the same color if and only if $\alpha$ and $\beta$ are orthogonal and $\alpha+\beta\in\Sigma_{K/H}$.
\end{itemize}

Notice that the full Cartan pairing $c_{K/H}\colon\mathbb Z\Delta_{K/H}\times\mathbb Z\Sigma_{K/H}\to\mathbb Z$ is determined by its restriction to
$\mathrm A_{K/H}\times\Sigma_{K/H}$. 

If $H$ is a wonderful subgroup of $K$, the triple $(S^\mathrm p_{K/H},\Sigma_{K/H},\mathrm A_{K/H})$, 
endowed with the map $c_{K/H}\colon \mathrm A_{K/H}\times\Sigma_{K/H}\to\mathbb Z$, 
is called the spherical system of $H$. 

\subsection{Luna diagrams} 

In Appendix \ref{B}, we present the spherical systems of the wonderful subgroups $H=\mathrm N_K(K_e)$ of $K$ by providing the sets of spherical roots $\Sigma_{K/H}$ and the Luna diagrams. The Luna diagram of a spherical system consists of the Dynkin diagram of $K$ decorated with some extra symbols from which one can read off all the data of the spherical system. Let us briefly explain how it works, here we only explain how to read off the missing data (the set $S^\mathrm p_{K/H}$ and the map $c_{K/H}\colon\mathrm A_{K/H}\times\Sigma_{K/H}$), see e.g.\ \cite{BL} for a complete description. 

Every circle (shadowed or not) represents a color. Circles corresponding to the same color are joined by a line. 
The colors moved by a simple root are close to the corresponding vertex of the Dynkin diagram: 
\begin{itemize}
\item[Case {\bf p})] no circle is placed in correspondence of the vertex,
\item[Case {\bf a})] two circles are placed one above and one below the vertex,
\item[Case {\bf 2a})] one circle is placed below the vertex,
\item[Case {\bf b})] one circle is placed around the vertex. 
\end{itemize}
Therefore, the set $S^\mathrm p$ is given by the vertices with no circles. 
It is worth saying that in general $S^\mathrm p$ is included in $\{\alpha\in S: \langle\alpha^\vee,\sigma\rangle=0\ \forall\,\sigma\in\Sigma\}$.   

To read off the map $c\colon\mathrm A\times\Sigma\to\mathbb Z$ one has to know that an arrow 
(it looks more like a pointer but it has a source and a target) 
starting from a circle $D$ above a vertex $\alpha$ and pointing towards a spherical root $\sigma$ non-orthogonal to $\alpha$
means that $c(D,\sigma)=-1$. 
Vice versa, the Luna diagram is organized in order that the colors $D$ corresponding to circles that lie above the vertices 
have $c(D,\sigma)\geq-1$ for all $\sigma\in\Sigma$, 
so if the there is no arrow starting from a circle $D$ above a vertex $\alpha$ and pointing towards a spherical root $\sigma$ non-orthogonal to $\alpha$
(with $D\not\in\Delta(\sigma)$) this means that $c(D,\sigma)=0$. These together with the properties of the Cartan pairing for colors of type (a), explained above, allows to recover the map $c\colon\mathrm A\times\Sigma\to\mathbb Z$.

The two colors moved by $\alpha\in S\cap\Sigma$ will be denoted by $D_\alpha^+$ and $D_\alpha^-$, the former refers to the circle placed above the vertex while the latter refers to the circle placed below. The color moved by a simple root $\alpha\not\in\Sigma$ will be denoted by $D_\alpha$.

As an example we show in detail how to recover the map $c\colon\mathrm A\times\Sigma\to\mathbb Z$ for the first case of the list where a non-empty set $\mathrm A_{K/H}$ occurs, the case \ref{sss:CCacy} with $q>2$.
The group $K$ is of type $\mathsf C_p\times\mathsf C_q$, with $p$ and $q$ greater than 2,
the set of spherical roots is 
\[\Sigma=\{\alpha_1,\alpha_2,\alpha'_1,\alpha'_2,\ \alpha'_2+2(\alpha'_3+\ldots+\alpha'_{q-1})+\alpha'_q\}\]
and the Luna diagram is as follows.
\[\begin{picture}(24600,2850)(-300,-1350)
\multiput(0,0)(13500,0){2}{
\multiput(0,0)(1800,0){3}{\usebox{\edge}}
\put(5400,0){\usebox{\susp}}
\put(9000,0){\usebox{\leftbiedge}}
\multiput(0,0)(1800,0){2}{\usebox{\aone}}
}
\put(3600,0){\usebox{\wcircle}}
\put(17100,0){\usebox{\gcircle}}
\multiput(0,-900)(15300,0){2}{\line(0,-1){450}}
\put(0,-1350){\line(1,0){15300}}
\multiput(0,900)(13500,0){2}{\line(0,1){600}}
\put(0,1500){\line(1,0){13500}}
\multiput(1800,900)(13500,0){2}{\line(0,1){300}}
\put(1800,1200){\line(1,0){11600}}
\put(13600,1200){\line(1,0){1700}}
\put(1800,600){\usebox{\tow}}
\put(13500,600){\usebox{\toe}}
\end{picture}\]
Here the set $S^\mathrm p$ is given by the simple roots $\alpha_i$ for all $4\leq i\leq p$ and $\alpha'_i$ for all $4\leq i\leq q$.
The elements of $\mathrm A$, i.e.\ the colors of type (a), are five: 
\[D_{\alpha_2}^-,\quad 
D_{\alpha_2}^+=D_{\alpha'_2}^+,\quad 
D_{\alpha_1}^-=D_{\alpha'_2}^-,\quad 
D_{\alpha_1}^+=D_{\alpha'_1}^+,\quad 
D_{\alpha'_1}^-.\] 
We know that for all colors $D$ of type (a) $c(D,\sigma)=1$ if $\sigma\in S$ and $D\in\Delta(\sigma)$, and $c(D,\sigma)\leq0$ otherwise. 
Therefore, let us show how to determine $c(D_{\alpha_2}^-,\sigma)$ for all $\sigma\in\Sigma$. First $D_{\alpha_2}^-\in\Delta(\alpha_2)$ then $c(D_{\alpha_2}^-,\alpha_2)=1$.
Since there is an arrow from $D_{\alpha_2}^+$ to $\alpha_1$ $c(D_{\alpha_2}^+,\alpha_1)=-1$, furthermore $c(D_{\alpha_2}^-,\alpha_1)+c(D_{\alpha_2}^+,\alpha_1)=\langle\alpha_2^\vee,\alpha_1\rangle=-1$, thus we have $c(D_{\alpha_2}^-,\alpha_1)=0$. The other spherical roots $\sigma$ are orthogonal to $\alpha_2$, so $c(D_{\alpha_2}^-,\sigma)+c(D_{\alpha_2}^+,\sigma)=0$,
if $c(D_{\alpha_2}^-,\sigma)$ is $<0$ then $c(D_{\alpha_2}^+,\sigma)$ must be $>0$ but this happens only if $D_{\alpha_2}^+\in\Delta(\sigma)$. Therefore, $c(D_{\alpha_2}^-,\alpha'_2)=-1$ while it is zero on the other two spherical roots $c(D_{\alpha_2}^-,\alpha'_1)=c(D_{\alpha_2}^-,\alpha'_2+2(\alpha'_3+\ldots+\alpha'_{q-1})+\alpha'_q)=0$. The entire map $c\colon\mathrm A\times\Sigma\to\mathbb Z$ is as follows.
\[\begin{array}{r|rrrrr}
 & \alpha_1 & \alpha_2 & \alpha'_1 & \alpha'_2 & \sigma_5 \\
\hline
D_{\alpha_2}^- &  0 &  1 &  0 & -1 &  0 \\
D_{\alpha_2}^+ & -1 &  1 &  0 &  1 &  0 \\
D_{\alpha_1}^- &  1 & -1 & -1 &  1 &  0 \\
D_{\alpha_1}^+ &  1 &  0 &  1 & -1 &  0 \\
D_{\alpha'_1}^- & -1 &  0 &  1 &  0 & -1 
\end{array}\]

\subsection{Operations on spherical systems}

Here we briefly recall the definition and the essential properties of some combinatorial operations on spherical systems which correspond to geometric operations on wonderful varieties, see e.g.\ \cite{BL} for some more details and references.

\subsubsection{Subsystems}

All (irreducible) $K$-subvarieties of a wonderful $K$-variety $X$ are wonderful, they are exactly the $K$-orbit closures of $X$, 
and are in correspondence with the subsets of $\Sigma_X$. 
If $D_1,\ldots,D_r$ are the $K$-stable prime divisors of $X$, 
recall that the spherical roots $\sigma_1,\ldots,\sigma_r$ are $T$-eigenvalues occurring respectively in the normal spaces of $D_i$ at $z$,
$\mathrm T_zX/\mathrm T_zD_i$. Therefore, every $K$-subvariety $X'$ of $X$ is the intersection of some $K$-stable prime divisors
\[X'=\bigcap_{i\in I}D_i\]
for some $I\subset\{1,\ldots,r\}$. Its spherical system is thus given by 
\begin{itemize}
\item $S^\mathrm p_{X'}=S^\mathrm p_X$, 
\item $\Sigma_{X'}=\{\sigma_i:i\not\in I\}$,
\item $\mathrm A_{X'}=\bigcup_{\alpha\in S\cap\Sigma_{X'}}\Delta_X(\alpha)$  
with the map $c_X$ restricted to $\mathbb Z\mathrm A_{X'}\times\mathbb Z\Sigma_{X'}$.
\end{itemize}

\subsubsection{Quotients}

Let $X_1$ and $X_2$ be the wonderful embeddings of $K/H_1$ and $K/H_2$, respectively.
If $H_1$ is included in $H_2$ with connected quotient $H_2/H_1$, 
there exists a surjective equivariant morphism from $X_1$ to $X_2$ with connected fibers.

In terms of sperical systems this is equivalent to an operation called quotient, as follows.

A subset $\Delta'$ of $\Delta_{X_1}$ is called distinguished if there exists a linear combination with positive coefficients
\[D'\in\sum_{D\in\Delta'}n_DD\]
such that $c_{X_1}(D',\sigma)\geq0$ for all $\sigma\in\Sigma_{X_1}$.

If $\Delta'$ is distinguished, the monoid 
\[(\mathbb N\Sigma_{X_1})/\Delta'=\{\sigma\in\mathbb N\Sigma_{X_1}:c_{X_1}(D,\sigma)=0\ \forall\,D\in\Delta'\}\] is known to be free \cite{Bra}.
Therefore, we can consider the following triple, which is called the quotient of the spherical system of $X_1$ by $\Delta'$:
\begin{itemize}
\item $S^\mathrm p_{X_1}/\Delta'=\{\alpha\in S: \Delta_{X_1}(\alpha)\subset\Delta'\}$,
\item $\Sigma_{X_1}/\Delta'$, the basis of $(\mathbb N\Sigma_{X_1})/\Delta'$,
\item $\mathrm A_{X_1}/\Delta'=\bigcup_{\alpha\in S\cap(\Sigma_{X_1}/\Delta')}\Delta_{X_1}(\alpha)$ 
endowed with the map $c_{X_1}$ restricted to $\mathbb Z(\mathrm A_{X_1}/\Delta')\times\mathbb Z(\Sigma_{X_1}/\Delta')$.
\end{itemize}

If $X_1$ and $X_2$ are wonderful $K$-varieties with a surjective equivariant morphism with connected fibers $\varphi\colon X_1\to X_2$,
then $\Delta'_{\varphi}=\{D\in\Delta_{X_1}: \varphi(D)=X_2\}$ is distinguished and the spherical system of $X_2$ is equal to the quotient of the spherical system of $X_1$ by $\Delta'_{\varphi}$.

If $X_1$ is a wonderful $K$-variety, every distinguished subset $\Delta'$ of $\Delta_{X_1}$ corresponds in this way to a surjective equivariant 
morphism with connected fibers onto a wonderful variety whose spherical system is equal to the quotient of the spherical system of $X_1$ by $\Delta'$.   

\subsubsection{Parabolic inductions}

Let $Q$ be a parabolic subgroup of $K$, with Levi decomposition $Q=L\,Q^\mathrm u$. 
A wonderful $K$-variety $X$ is said to be obtained by parabolic induction from the wonderful $L$-variety $Y$ if
\[X\cong K\times_Q Y,\]
where $Q^\mathrm u$ acts trivially on $Y$. 

Further, since $Y$ is a wonderful $L$-variety, the radical of $L$ acts trivially on $Y$, as well.   

Clearly, if the wonderful $K$-variety $X$ is obtained by parabolic induction from the wonderful embedding of $L/M$,
then $X$ is the wonderful embedding of $K/(M\,Q^\mathrm u)$. 

In terms of spherical systems this corresponds to the following situation. 

Assume that $Q$ contains $B^-$ and $L$ contains $T$, denote by $S_L$ the subset of $S$ generating the root subsystem of $L$.

The wonderful $K$-variety $X$ is obtained by parabolic induction from a wonderful $L$-variety $Y$ if and only if
\[S^\mathrm p_X\cup\{\mathrm{supp}\,\sigma:\ \forall\,\sigma\in\Sigma_X\}\subset S_L.\]
In this case, the spherical system of $Y$,
after the above inclusion, is equal to the triple $(S^\mathrm p_X,\Sigma_X,\mathrm A_X)$.

In plain words, the spherical system of $X$ is obtained from the spherical system of $Y$ 
by letting the extra simple roots in $S\smallsetminus S_L$ move one extra color each so that they are all of type (b).

\subsubsection{Localizations}

Let $Q$ be a parabolic subgroup of $K$, containing $B^-$, and let $Q=L\,Q^\mathrm u$ be its Levi decomposition, with $L$ containing $T$.
Denote by $L^\mathrm r$ the radical of $L$, and by $S_L$ the subset of $S$ generating the root subsystem of $L$.

Let $X$ be a wonderful $K$-variety. 
Consider the subset of $X$ of points fixed by $L^\mathrm r$ 
and take its connected component which contains $z$, the unique point fixed by $B^-$.
It is a wonderful $L$-variety $Y$ called $L$-localization of $X$. 
The spherical system of $Y$ is obtained from the spherical system of $X$ as follows:
\begin{itemize}
\item $S^\mathrm p_Y=S^\mathrm p_X\cap S_L$,
\item $\Sigma_Y=\{\sigma\in\Sigma_X:\mathrm{supp}\,\sigma\subset S_L\}$,
\item $\mathrm A_Y=\bigcup_{\alpha\in S_L\cap\Sigma_X}\Delta_X(\alpha)$
with the map $c_{X}$ restricted to $\mathbb Z\mathrm A_{Y}\times\mathbb Z\Sigma_{Y}$.
\end{itemize}
In this case the spherical system of $Y$ is said to be obtained from the spherical system of $X$ by localization in $S_L$. 

\subsection{Luna's classification of wonderful varieties}

Here we recall the statement of Luna's theorem of the classification of wonderful varieties, \cite{luna_typeA, CF, BP16}.

In our case the center of $K$ always acts trivially, so here we assume for convenience that $K$ is a semisimple complex algebraic group of adjoint type. Let $T$, $B$ and $S$ as above.

Every spherical root of any wonderful $K$-variety is the spherical root of a wonderful $K$-variety of rank 1, and the wonderful varieties of rank 1 are well-known. In particular, the set $\Sigma(K)$ of the spherical roots of all the wonderful $K$-varieties is finite and is described by the following. 

\begin{theorem}
Every spherical root $\sigma$ of any wonderful $K$-variety, for any semisimple complex algebraic group $K$ of adjoint type belongs to Table \ref{tab:sphroots}.
\end{theorem}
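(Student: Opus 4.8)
The plan is to reduce the statement to the classification of wonderful $K$-varieties of rank $1$, which is exactly the content of the sentence preceding the theorem. Concretely, fix a wonderful $K$-variety $X$ with spherical roots $\Sigma_X=\{\sigma_1,\ldots,\sigma_r\}$ and select one of them, say $\sigma=\sigma_j$. Using the subsystem operation recalled above, I would pass to the $K$-orbit closure
\[X'=\bigcap_{i\neq j}D_i,\]
where $D_1,\ldots,D_r$ are the $K$-stable prime divisors of $X$. By the description of the spherical system of a subsystem, $X'$ is again wonderful, with the same $S^\mathrm p$ but with $\Sigma_{X'}=\{\sigma\}$; hence $X'$ has rank $1$ and retains $\sigma$ as its unique spherical root. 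Thus every spherical root occurring on some wonderful $K$-variety already occurs on a wonderful $K$-variety of rank $1$, and it suffices to determine which elements of $\mathbb NS$ arise in this way.

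Next I would trim the acting group so that the combinatorics become irreducible. Localizing $X'$ in $S_L=\supp\sigma$ produces a wonderful $L$-variety $Y$ whose spherical roots are exactly those $\sigma'\in\Sigma_{X'}$ with $\supp\sigma'\subset S_L$; since $\Sigma_{X'}=\{\sigma\}$ this yields a rank-$1$ wonderful $L$-variety whose single spherical root $\sigma$ has full support on $S_L$. In this way the problem decouples over the irreducible components of $\supp\sigma$, and one is reduced to listing, for each irreducible root system (together with the few small reducible configurations such as $\mathsf A_1\times\mathsf A_1$ that support a single spherical root like $\alpha+\alpha'$), the elements $\sigma\in\mathbb NS$ admissible as the unique full-support spherical root of a rank-$1$ wonderful variety.

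For this last, decisive step I would invoke the classical classification of wonderful varieties of rank $1$ (equivalently, of rank-one spherical homogeneous spaces), which enumerates the admissible $\sigma$ for each root system; reading off the resulting finite list gives precisely the entries of Table~\ref{tab:sphroots}. The combinatorial restrictions already visible from the Cartan-pairing conditions of cases $\mathbf p$, $\mathbf a$, $\mathbf{2a}$, $\mathbf b$—namely that $\sigma$ is a non-negative integral combination of simple roots, that $\langle\alpha^\vee,\sigma\rangle\leq2$, and the type constraints on the colours moved by the roots in $\supp\sigma$—cut the candidates down to a finite list, so the enumeration itself is routine once these are in place.

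The main obstacle is precisely this realizability question inside the rank-$1$ classification: the combinatorial constraints produce a list of candidate spherical roots, but establishing that each candidate genuinely occurs—and that the near-misses do not—is not formal and rests on the geometric classification of rank-one wonderful varieties (one must exhibit, for each admissible $\sigma$, an actual wonderful completion, typically of a suitable symmetric or reductive spherical space, and exclude the remaining cases). By contrast, the two reductions via subsystems and localization are direct consequences of the operations recalled in the previous subsection and carry no real difficulty.
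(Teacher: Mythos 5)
Your proposal is correct and follows exactly the route the paper itself indicates: the paper offers no proof beyond the remark preceding the theorem that every spherical root is the spherical root of a rank-one wonderful variety and that these are well known, and your reduction via subsystems and localization is just a careful spelling-out of that remark, with the substance residing (as you rightly flag) in the classical classification of rank-one wonderful varieties, which is cited rather than reproved.
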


\begin{table}\caption{spherical roots}\label{tab:sphroots}
\[\begin{array}{ll}
\mbox{type of support} & \mbox{spherical root}  \\
\hline
\mathsf A_1 & \alpha \\
\mathsf A_1 & 2\alpha \\
\mathsf A_1\times \mathsf A_1 & \alpha+\alpha' \\
\mathsf A_m & \alpha_1+\ldots+\alpha_m \\
\mathsf A_3 & \alpha_1+2\alpha_2+\alpha_3 \\
\mathsf B_m & \alpha_1+\ldots+\alpha_m \\
\mathsf B_m & 2(\alpha_1+\ldots+\alpha_m) \\
\mathsf B_3 & \alpha_1+2\alpha_2+3\alpha_3 \\
\mathsf C_m & \alpha_1+2(\alpha_2+\ldots+\alpha_{m-1})+\alpha_m \\
\mathsf D_m & 2(\alpha_1+\ldots+\alpha_{m-2})+\alpha_{m-1}+\alpha_m \\ 
\mathsf F_4 & \alpha_1+2\alpha_2+3\alpha_3+2\alpha_4 \\
\mathsf G_2 & 2\alpha_1+\alpha_2 \\
\mathsf G_2 & 4\alpha_1+2\alpha_2 \\
\mathsf G_2 & \alpha_1+\alpha_2
\end{array}\]
\end{table}

There is an abstract notion of Luna spherical system, the following. 

\begin{definition}\label{def:sphsystem}
A triple $(S^\mathrm p, \Sigma, \mathrm A)$, where $S^\mathrm p$ is a subset of $S$, $\Sigma$ is a subset of $\Sigma(K)$ without proportional elements and $\mathrm A$ is a finite set endowed with a map $c\colon \mathrm A\times\Sigma\to\mathbb Z$, is called a \textit{spherical $K$-system} if the following axioms hold.
\begin{itemize}
\item[$\mathrm A1)$] For all $D\in\mathrm A$, $c(D,\sigma)\leq1$ for all $\sigma\in\Sigma$, and $c(D,\sigma)=1$ only if $\sigma\in S$.
\item[$\mathrm A2)$] For all $\alpha\in S\cap\Sigma$, $\{D\in\mathrm A:c(D,\alpha)=1\}$ has cardinality 2 and 
for all $\sigma\in\Sigma$
\[\sum_{D\,:\,c(D,\alpha)=1}c(D,\sigma)=\langle\alpha^\vee,\sigma\rangle.\]
\item[$\mathrm A3$)] For all $D\in\mathrm A$ there exists $\alpha\in S\cap\Sigma$ with $c(D,\alpha)=1$.
\item[$\Sigma1$)] For all $\alpha\in S$ such that $2\alpha\in\Sigma$, $\frac12\langle\alpha^\vee,\sigma\rangle\in\mathbb Z_{\leq0}$ for all $\sigma\in\Sigma\smallsetminus\{2\alpha\}$.
\item[$\Sigma2$)] For all $\alpha$ and $\beta$ in $S$ such that $\alpha$ and $\beta$ are orthogonal and $\alpha+\beta\in\Sigma$, $\langle\alpha^\vee,\sigma\rangle=\langle\beta^\vee,\sigma\rangle$ for all $\sigma\in\Sigma$.
\item[$\mathrm S$)] For all $\sigma\in\Sigma$,
\begin{itemize}
\item if $\sigma=\alpha_1+\ldots+\alpha_m$ with $\mathrm{supp}\,\sigma$ of type $\mathsf B_m$,
\[\{\alpha_2,\ldots,\alpha_{m-1}\}\subset S^\mathrm p\subset \{\alpha\in S:\langle\alpha^\vee,\sigma\rangle=0\},\]
\item if $\sigma=\alpha_1+2(\alpha_2+\ldots+\alpha_{m-1})+\alpha_m$ with $\mathrm{supp}\,\sigma$ of type $\mathsf C_m$,
\[\{\alpha_3,\ldots,\alpha_{m}\}\subset S^\mathrm p\subset \{\alpha\in S:\langle\alpha^\vee,\sigma\rangle=0\},\]
\item otherwise
\[\{\alpha\in\mathrm{supp}\,\sigma:\langle\alpha^\vee,\sigma\rangle=0\}\subset S^\mathrm p\subset \{\alpha\in S:\langle\alpha^\vee,\sigma\rangle=0\}.\]
\end{itemize}
\end{itemize}
\end{definition}

The following is known as Luna's theorem of classification of wonderful varieties.

\begin{theorem}\label{thm:Luna}
The map which associates to a wonderful $K$-variety $X$ its spherical system $(S^\mathrm p_X,\Sigma_X,\mathrm A_X)$ is a bijection between the set of wonderful $K$-varieties up to equivariant isomorphism and the set of spherical $K$-systems. 
\end{theorem}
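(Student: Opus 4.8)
The plan is to establish the asserted bijection by treating three things in turn: that the map is well defined (every wonderful $K$-variety yields a triple satisfying the axioms of Definition~\ref{def:sphsystem}), that it is injective, and that it is surjective. The first point is essentially the content of the discussion preceding the statement: the classification of simple roots into cases \textbf{p}, \textbf{a}, \textbf{2a}, \textbf{b} and the listed properties of the Cartan pairing $c_{K/H}$ are exactly axioms $\mathrm A1)$--$\mathrm A3)$, while $\Sigma1)$, $\Sigma2)$ and $\mathrm S)$ record the compatibility between spherical roots and $S^\mathrm p$ that holds for genuine wonderful varieties. So I would dispatch well-definedness quickly by reference to that analysis and concentrate on the two substantive directions.

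For injectivity I would show that the spherical system recovers the open orbit $K/H$ up to $K$-isomorphism; the claim then follows because, as recalled above, the wonderful embedding of a given $K/H$ is unique up to equivariant isomorphism. To recover $K/H$, one observes that $(S^\mathrm p_X,\Sigma_X,\mathrm A_X)$ together with $c_X$ reconstructs the full combinatorial invariant of the homogeneous space: its lattice of $B$-weights, its valuation cone (spanned by the duals of the spherical roots), and the images of the colors therein. By the uniqueness results for spherical homogeneous spaces, these data determine $K/H$ up to isomorphism, so two wonderful varieties with equal spherical systems have isomorphic open orbits and hence are themselves isomorphic.

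The hard direction is surjectivity, namely that every abstract spherical $K$-system is \emph{realized} by a wonderful variety, and I would attack it by reduction to a finite, type-by-type list of \emph{primitive} systems. The combinatorial operations recalled above all carry geometric meaning: a quotient by a distinguished set of colors corresponds to a surjective equivariant morphism with connected fibers, parabolic induction to the fibration $K\times_Q Y$, and localization to passing to the connected $L^\mathrm r$-fixed component through the point $z$. The strategy is to prove that every abstract system is obtained by these operations from strictly smaller realizable systems unless it belongs to an explicit primitive list, to check that each operation preserves geometric realizability, and to construct a wonderful variety for each primitive system directly — either as a known rank-one wonderful variety (whose spherical roots are precisely those of Table~\ref{tab:sphroots}), or as the closure of a suitable orbit in a projective space $\mP(V)$ of a chosen simple $K$-module, verifying smoothness and the wonderful axioms by hand.

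The main obstacle is this surjectivity step: one must show that the reduction via the four operations is exhaustive — that every system not produced by a nontrivial operation lies in the primitive list — and then realize each primitive system geometrically, which is a genuinely heavy, type-dependent case analysis. This is precisely the content distributed over \cite{luna_typeA, CF, BP16}, comprising Luna's original treatment of type $\mathsf A$, its extension, and the completion of the program for arbitrary semisimple $K$. I would not reproduce that classification, but invoke these references once the theorem has been reduced to the verification that the operations are geometric and that the list of primitive systems is complete.
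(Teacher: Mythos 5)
The paper offers no proof of this statement: it is introduced verbatim as ``Luna's theorem of classification of wonderful varieties'' and is simply cited to \cite{luna_typeA, CF, BP16}, so there is no argument in the text to compare yours against. Your outline is a fair description of the architecture of the proof as it exists in that literature, and you correctly end by deferring the substantive work (exhaustiveness of the reduction to primitive systems and their geometric realization) to the same references the paper cites.

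Two caveats are worth recording. First, your injectivity step is stated as if it were routine, but the assertion that the weight lattice, valuation cone and colors determine a spherical homogeneous space up to isomorphism is itself a deep theorem (Losev's uniqueness theorem), not an immediate consequence of anything recalled in the paper; for a wonderful variety one does at least get the weight lattice for free as $\mZ\Sigma_X$, but the uniqueness input still has to be invoked explicitly. Second, the three cited references do not all follow the reduction-to-primitive-cases strategy you describe: Luna's paper and the Bravi--Pezzini completion do, whereas Cupit-Foutou's proof proceeds by an essentially different route through invariant Hilbert schemes, so presenting the primitive-case reduction as ``the'' content of all three is slightly inaccurate.
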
 

\subsection{The spherical systems of the list} 

Here we show that the spherical systems given in the tables of Appendix~\ref{B} are indeed the spherical sytems associated with $\mathrm N_K(K_e)$, the normalizers of the centralizers of the representatives $e$ given in Appendix~\ref{A}.

For all $K$, every spherical system given in the tables satisfies the axioms of Definition~\ref{def:sphsystem}, 
so by Theorem~\ref{thm:Luna} it is equal to the spherical system associated with a (uniquely determined up to conjugation) wonderful subgroup of $K$. Here we compute this wonderful subgroup for any spherical system of Appendix~\ref{B}.

\subsubsection{Parabolic inductions and trivial factors}	\label{sss:parabolic induction}

In all the spherical systems of Appendix~\ref{B} the set $(\supp\Sigma)\cup S^\mathrm p$ is properly contained in $S$, 
therefore the corresponding wonderful $K$-varieties $X$ can be obtained by parabolic induction from wonderful $L$-varieties $Y$, where $L$ is properly contained in $K$. We set $S_L=(\supp\Sigma)\cup S^\mathrm p$.

Furthermore, in general $\supp\Sigma$ and $S^\mathrm
p\smallsetminus\supp\Sigma$ are orthogonal, so that $L$ is a direct product
$L_1\times L_2$, where $S_{L_1}=\supp\Sigma$ and $S_{L_2}=S^\mathrm
p\smallsetminus\supp\Sigma$, with $L_2$ acting trivially on $Y$.  In many
cases $S^\mathrm p\smallsetminus\supp\Sigma$ is non-empty.

Notice that the above decomposition $L=L_1\times L_2$ is not uniquely determined, but here the center of $L$ acts trivially on $Y$, so we do not care of which part of the center of $L$ is contained in the two factors $L_1$ and $L_2$. 

In the following we will compute, in all our cases, the wonderful subgroups associated with the spherical systems obtained by localization in $S_{L_1}=\supp\Sigma$.

\subsubsection{Trivial cases} \label{sss:trivial cases}

In the cases \ref{sss:symm1} ($r=1$), \ref{sss:symm2} ($r=1$), \ref{sss:Dao} ($r=1$), \ref{sss:symm4.1} ($r=1$), \ref{sss:trvial1}, \ref{sss:trvial2}, \ref{sss:BDaa} ($r=1$), \ref{sss:symm8} ($r=1$) and \ref{sss:DDaa} ($r=1$) the set $\Sigma$ is empty, so the spherical system obtained by localization in $\supp\Sigma$ is trivial. More explicitly, the parabolic subgroups $Q$ of $K$ given in Appendix~\ref{A} are the wonderful subgroups associated with the given spherical $K$-systems. 

\subsubsection{Symmetric cases} \label{sss:symmetric cases}

In the cases \ref{sss:symm1}, \ref{sss:symm2}, \ref{sss:Dao}, \ref{sss:symm4.1}, \ref{sss:CCaac} ($q=1$), \ref{sss:CCaac2} ($p=1$), \ref{sss:BDaa}, \ref{sss:BDady} ($r=0$), \ref{sss:BDbay} ($r=0$),  \ref{sss:symm8}, \ref{sss:BBaby} ($r=0$), \ref{sss:BBbay} ($r=0$), \ref{sss:DDaa}, \ref{sss:DDady} ($r=0$) and \ref{sss:DDday} ($r=0$) the spherical system obtained by localization in $\supp\Sigma$ is the spherical system of a symmetric subgroup $\mathrm N_{L_1}(L_1^\theta)$ of $L_1$,  where $L_1^\theta$ is the fixed point subgroup of an involution $\theta$ of $L_1$.

The wonderful symmetric subgroups and their spherical systems are well-known, see e.g.\ \cite{BP15}.      
More precisely, 
\begin{itemize}
\item in the case \ref{sss:symm1} we get the case 6 of \cite{BP15}; 
\item in the cases \ref{sss:symm2}, \ref{sss:Dao}, \ref{sss:BDbay} ($r=0,\ p=1$), \ref{sss:BBaby} ($r=0,\ q=1$) and \ref{sss:BBbay} ($r=0,\ p=1$) we get the case 5 of \cite{BP15};
\item in the cases \ref{sss:symm4.1}, \ref{sss:CCaac} ($q=1$), \ref{sss:CCaac2} ($p=1$), \ref{sss:BDaa},  \ref{sss:BDady} ($r=0,\ q=2$), \ref{sss:symm8}, \ref{sss:DDaa}, \ref{sss:DDady} ($r=0,\ q=2$) and 
\ref{sss:DDday} ($r=0,\ p=2$) we get the case 2 of \cite{BP15};
\item in the cases \ref{sss:BDbay} ($r=0,\ p>1$), \ref{sss:BBaby} ($r=0,\ q>1$) and \ref{sss:BBbay} ($r=0,\ p>1$)  we get the case 9 of \cite{BP15};
\item in the cases \ref{sss:BDady} ($r=0,\ q>2$), \ref{sss:DDady} ($r=0,\ q>2$) and \ref{sss:DDday} ($r=0,\ p>2$)  we get the case 15 of \cite{BP15}.
\end{itemize}

\subsubsection{Other reductive cases} \label{sss:reductive}

In the cases \ref{sss:CCaac} ($q>1$), \ref{sss:CCaac2} ($p>1$), \ref{sss:CCabx} and \ref{sss:CCbax} the spherical system obtained by localization in $\supp\Sigma$ is the spherical system of a wonderful reductive (but not symmetric) subgroup of $L_1$. More precisely,
\begin{itemize}
\item in the cases \ref{sss:CCaac} ($q>1$) and \ref{sss:CCaac2} ($p>1$) we get the case 42 of \cite{BP15};
\item in the cases \ref{sss:CCabx} and \ref{sss:CCbax} we get the case 46 ($p=5$) of \cite{BP15}.
\end{itemize}

\subsubsection{Morphisms of type $\mathcal L$} \label{sss:quotients}

Notice that in all the above cases the Levi subgroup $L$ such that $S_L=(\supp\Sigma)\cup S^\mathrm p$ is equal to $K_h$, the centralizer of $h$ given in the list of Appendix~\ref{A}. In the remaining cases this is no longer true, but we have the following situation. 

In the remaining cases, 
\ref{sss:CCacy}, \ref{sss:CCcay},  
\ref{sss:BDady} ($r>0$),  
\ref{sss:BDbay} ($r>0$),  
\ref{sss:BBaby} ($r>0$), \ref{sss:BBbay} ($r>0$),  
\ref{sss:DDady} ($r>0$) and  
\ref{sss:DDday} ($r>0$),  
the given spherical $K$-system $(S^\mathrm p,\Sigma,\mathrm A)$ admits a distinguished set of colors $\Delta'$ such that the corresponding quotient 
\[(S^\mathrm p/\Delta',\Sigma/\Delta',\mathrm A/\Delta')\] 
is the spherical system of a wonderful $K$-variety which is obtained by parabolic induction from a wonderful $K_h$-variety. Indeed, $S_{K_h}= (\supp(\Sigma/\Delta'))\cup (S^\mathrm p/\Delta')$.  

Such distinguished set of colors $\Delta'$ is minimal, that is, does not contain any proper non-empty distinguished subset. Moreover, the corresponding quotient has higher defect, which means the following.

The defect of a spherical system is defined as the non-negative integer given by the difference between the number of colors and the number of spherical roots.

In all our cases, we have
\begin{equation}\label{eqn:defect}
\mathrm{card}(\Delta\smallsetminus\Delta')-\mathrm{card}(\Sigma/\Delta')>\mathrm{card}\,\Delta-\mathrm{card}\,\Sigma.
\end{equation}  
Therefore, the set $\Delta'$ corresponds to a minimal surjective equivariant morphism with connected fibers of type $\mathcal L$ in the sense of \cite[Proposition~2.3.5]{BL}. In particular, the minimal quotients of higher defect have been studied in \cite[Section~5.3]{BP}. Let us recall their description.

Let $H_1$ be the wonderful subgroup associated with the spherical $K$-system $(S^\mathrm p,\Sigma,\mathrm A)$,
let $\Delta'$ be a distinguished subset satisfying the condition \eqref{eqn:defect} and let $H_2$ be the wonderful subgroup of $K$ associated with the quotient of $(S^\mathrm p,\Sigma,\mathrm A)$ by $\Delta'$. We can assume $H_1\subset H_2$. Recall that the quotient $H_2/H_1$ is connected. 

Under the condition \eqref{eqn:defect} we have that $H_1^\mathrm u$ is properly contained in $H_2^\mathrm u$. Take Levi decompositions $H_1=L_{H_1}H_1^\mathrm u$ and $H_2=L_{H_2}H_2^\mathrm u$ with $L_{H_1}\subset L_{H_2}$, then $\mathrm{Lie}\,H_2^\mathrm u/\mathrm{Lie}\,H_1^\mathrm u$ is a simple $L_{H_1}$-module and $L_{H_1}$ and $L_{H_2}$ differ only by their connected center. 

The defect of a spherical system is equal to the dimension of the connected center of the associated wonderful subgroup, so the codimension of $L_{H_1}$ in $L_{H_2}$ is equal to
\[
d=\mathrm{card}(\Delta\smallsetminus\Delta')-\mathrm{card}(\Sigma/\Delta')-(\mathrm{card}\,\Delta-\mathrm{card}\,\Sigma).
\]

The quotient $\mathrm{Lie}\,H_2^\mathrm u/\mathrm{Lie}\,H_1^\mathrm u$ can be described as follows. There exist $d+1$ $L_{H_2}$-submodules of $\mathrm{Lie}\,H_2^\mathrm u$, $W_0,\ldots,W_d$, isomorphic as $L_{H_1}$-modules but not as $L_{H_2}$-modules. Denoting by $V$ the $L_{H_2}$-complement of $W_0\oplus \ldots\oplus W_d$ in $\mathrm{Lie}\,H_2^\mathrm u$, as $L_{H_1}$-module, 
\[\mathrm{Lie}\,H_1^\mathrm u=W\oplus V,\]
where $W$ is a co-simple $L_{H_1}$-submodule of $W_0\oplus \ldots\oplus W_d$ which projects non-trivially on every summand $W_0,\ldots,W_d$. 

As said above, in our cases we always have $H_2\subset Q$, with $Q=K_h\,Q^\mathrm u$ given in the list of Appendix~\ref{A}, $L_{H_2}\subset K_h$ and $H_2^\mathrm u=Q^\mathrm u$. 

One can say something more about the inclusion of the $W_0,\ldots,W_d$ in $\mathrm{Lie}\,Q^\mathrm u$. One has to consider the set $S_{\Delta'}$, whose general definition involves the notion of external negative color (see \cite[Section~2.3.5]{BL} and \cite[Section~5.2]{BP}). Without going into technical details, in our cases it holds
\[S_{\Delta'}=(\supp\Sigma) \smallsetminus (\supp(\Sigma/\Delta')).\] 
Moreover, $\mathrm{card}\,S_{\Delta'}=d+1$, say $S_{\Delta'}=\{\beta_0,\ldots,\beta_d\}$.
Assuming $Q$ contains $B^-$, we have that $W_0,\ldots,W_d$ are respectively included in the simple $L$-submodules $V(-\beta_0),\ldots,V(-\beta_d)$ containing the root spaces of $-\beta_0,\ldots,-\beta_d$. 

In our cases the integer $d+1$, the cardinality of $S_{\Delta'}$, is always equal to $2$ or $3$. 

In the following, for all the remaining cases, we describe the quotient of $(S^\mathrm p,\Sigma,\mathrm A)$ by $\Delta'$, and $L_{H_2}$ in $K_h$. The knowledge of $S_{\Delta'}$ will be enough to uniquely determine the modules $W_0,\ldots,W_d$. 

\begin{remark}
Actually, the results contained in \cite{BP} allow to reduce the computation of the wonderful subgroup associated with a spherical system to the computation of the wonderful subgroups associated with somewhat smaller spherical systems. In particular, Section~5.3 therein allows to reduce the computation of the wonderful subgroup associated with a spherical system with a quotient of higher defect to the computation of the wonderful subgroups associated with some spherical subsystems. Moreover, many of the spherical systems under consideration have a tail, see Section~6 therein, and these cases can also be reduced to some smaller cases. Similar general considerations could be done for the cases obtained by \lq\lq collapsing\rq\rq\ the tails. We prefer to avoid as far as possible the technicalities and give a direct explicit description of our wonderful subgroups even if they are somewhat already known. 
\end{remark}

\subsubsection{Type $\mathsf B$}	\label{sss:typeB}

\paragraph{\textbf{a) Tail case.}}
Localizing the spherical systems of the cases 
\ref{sss:BDbay} ($0<r<p$), \ref{sss:BBaby} ($0<r<q$) and \ref{sss:BBbay} ($0<r<p$)
in $\supp\Sigma$ we obtain the following spherical system,
which we label as $\mathsf{a^y}(s,s)+\mathsf b'(t)$, 
for a group of semisimple type $\mathsf A_s\times\mathsf B_{s+t}$ with $t\geq1$.
\begin{itemize}
\item[]$S^\mathrm p=\{\alpha'_{s+2},\ldots,\alpha'_{s+t}\}$.
\item[]$\Sigma=\{\alpha_1,\ldots,\alpha_s,\ \alpha'_1,\ldots,\alpha'_s,\ 2(\alpha'_{s+1}+\ldots+\alpha'_{s+t})\}$.
\item[]$\mathrm A=\{D_1,\ldots,D_{2s+1}\}$ 
with $\Delta=\mathrm A\cup\{D_{2s+2}\}$ and full Cartan pairing as follows:
\begin{itemize}
\item[]$\alpha_1=D_1+D_2-D_3$,
\item[]$\alpha_i=-D_{2i-2}+D_{2i-1}+D_{2i}-D_{2i+1}$ for $2\leq i\leq s$,
\item[]$\alpha'_i=-D_{2i-1}+D_{2i}+D_{2i+1}-D_{2i+2}$ for $1\leq i\leq s$,
\item[]$2(\alpha'_{s+1}+\ldots+\alpha'_{s+t})=-2D_{2s+1}+2D_{2s+2}$.
\end{itemize}
\end{itemize}

If $t=1$ the Luna diagram is as follows,
\[\begin{picture}(15900,3600)(-300,-1800)
\multiput(0,0)(8100,0){2}{
\put(0,0){\usebox{\edge}}
\put(1800,0){\usebox{\susp}}
\multiput(0,0)(1800,0){2}{\usebox{\aone}}
\put(5400,0){\usebox{\aone}}
}
\put(9900,0){\usebox{\susp}}
\put(13500,0){\usebox{\rightbiedge}}
\put(15300,0){\usebox{\aprime}}
\multiput(0,900)(8100,0){2}{\line(0,1){900}}
\put(0,1800){\line(1,0){8100}}
\multiput(1800,900)(8100,0){2}{\line(0,1){600}}
\put(1800,1500){\line(1,0){6200}}
\put(8200,1500){\line(1,0){1700}}
\multiput(5400,900)(8100,0){2}{\line(0,1){300}}
\put(5400,1200){\line(1,0){2600}}
\put(8200,1200){\line(1,0){1600}}
\put(10000,1200){\line(1,0){3500}}
\multiput(1800,-900)(6300,0){2}{\line(0,-1){900}}
\put(1800,-1800){\line(1,0){6300}}
\multiput(3600,-1500)(0,300){3}{\line(0,1){150}}
\put(3600,-1500){\line(1,0){4400}}
\put(8200,-1500){\line(1,0){1700}}
\put(9900,-1500){\line(0,1){600}}
\put(5400,-900){\line(0,-1){300}}
\put(5400,-1200){\line(1,0){2600}}
\put(8200,-1200){\line(1,0){1600}}
\put(10000,-1200){\line(1,0){1700}}
\multiput(11700,-1200)(0,300){2}{\line(0,1){150}}
\multiput(0,600)(1800,0){2}{\usebox{\toe}}
\multiput(9900,600)(3600,0){2}{\usebox{\tow}}
\end{picture}\]
while if $t>1$ it is as follows,
\[\begin{picture}(22800,3600)(-300,-1800)
\multiput(0,0)(8100,0){2}{
\put(0,0){\usebox{\edge}}
\put(1800,0){\usebox{\susp}}
\multiput(0,0)(1800,0){2}{\usebox{\aone}}
\put(5400,0){\usebox{\aone}}
}
\multiput(9900,0)(7200,0){2}{\usebox{\susp}}
\multiput(13500,0)(1800,0){2}{\usebox{\edge}}
\put(20700,0){\usebox{\rightbiedge}}
\put(15300,0){\usebox{\gcircletwo}}
\multiput(0,900)(8100,0){2}{\line(0,1){900}}
\put(0,1800){\line(1,0){8100}}
\multiput(1800,900)(8100,0){2}{\line(0,1){600}}
\put(1800,1500){\line(1,0){6200}}
\put(8200,1500){\line(1,0){1700}}
\multiput(5400,900)(8100,0){2}{\line(0,1){300}}
\put(5400,1200){\line(1,0){2600}}
\put(8200,1200){\line(1,0){1600}}
\put(10000,1200){\line(1,0){3500}}
\multiput(1800,-900)(6300,0){2}{\line(0,-1){900}}
\put(1800,-1800){\line(1,0){6300}}
\multiput(3600,-1500)(0,300){3}{\line(0,1){150}}
\put(3600,-1500){\line(1,0){4400}}
\put(8200,-1500){\line(1,0){1700}}
\put(9900,-1500){\line(0,1){600}}
\put(5400,-900){\line(0,-1){300}}
\put(5400,-1200){\line(1,0){2600}}
\put(8200,-1200){\line(1,0){1600}}
\put(10000,-1200){\line(1,0){1700}}
\multiput(11700,-1200)(0,300){2}{\line(0,1){150}}
\multiput(0,600)(1800,0){2}{\usebox{\toe}}
\multiput(9900,600)(3600,0){2}{\usebox{\tow}}
\end{picture}\]
but the combinatorics is the same, so from now on we just report the diagram for $t>1$.

Consider the quotient by $\Delta'=\{D_{2i}:1\leq i\leq s\}$.

$\Sigma/\Delta'=\{\alpha_2+\alpha'_1,\ldots,\alpha_s+\alpha'_{s-1},\ 2(\alpha'_{s+1}+\ldots+\alpha'_{s+t})\}$.
\[\begin{picture}(22800,2850)(-300,-1350)
\multiput(0,0)(8100,0){2}{
\put(0,0){\usebox{\edge}}
\put(1800,0){\usebox{\susp}}
\multiput(0,0)(1800,0){2}{\usebox{\wcircle}}
\put(5400,0){\usebox{\wcircle}}
}
\multiput(9900,0)(7200,0){2}{\usebox{\susp}}
\multiput(13500,0)(1800,0){2}{\usebox{\edge}}
\put(20700,0){\usebox{\rightbiedge}}
\put(15300,0){\usebox{\gcircletwo}}
\multiput(1800,-300)(6300,0){2}{\line(0,-1){1050}}
\put(1800,-1350){\line(1,0){6300}}
\multiput(3600,-1050)(0,300){3}{\line(0,1){150}}
\put(3600,-1050){\line(1,0){4400}}
\put(8200,-1050){\line(1,0){1700}}
\put(9900,-1050){\line(0,1){750}}
\put(5400,-300){\line(0,-1){450}}
\put(5400,-750){\line(1,0){2600}}
\put(8200,-750){\line(1,0){1600}}
\put(10000,-750){\line(1,0){1700}}
\multiput(11700,-750)(0,300){2}{\line(0,1){150}}
\end{picture}\]
It is a spherical system obtained by parabolic induction from the direct product of case 2 and the rank one case 9 (resp.\ the rank one case 4) if $t>1$ (resp.\ $t=1$), the labels referring to \cite{BP15}.

We have $S_{\Delta'}=\{\alpha_1,\alpha'_s\}$.

\paragraph{\textbf{b) Collapsed tail.}}
Localizing the spherical systems of the cases 
\ref{sss:BDbay} ($r=p$), \ref{sss:BBaby} ($r=q$) and \ref{sss:BBbay} ($r=p$)
in $\supp\Sigma$ we obtain the following spherical system,
which is labeled as $\mathsf{ab^y}(s,s)$ or S-6 in \cite{Bra}, 
for a group of semisimple type $\mathsf A_s\times\mathsf B_s$.
\begin{itemize}
\item[]$S^\mathrm p=\emptyset$.
\item[]$\Sigma=\{\alpha_1,\ldots,\alpha_s,\ \alpha'_1,\ldots,\alpha'_s\}$.
\item[]$\mathrm A=\{D_1,\ldots,D_{2s+1}\}=\Delta$ 
with Cartan pairing as follows:
\begin{itemize}
\item[]$\alpha_1=D_1+D_2-D_3$,
\item[]$\alpha_i=-D_{2i-2}+D_{2i-1}+D_{2i}-D_{2i+1}$ for $2\leq i\leq s$,
\item[]$\alpha'_i=-D_{2i-1}+D_{2i}+D_{2i+1}-D_{2i+2}$ for $1\leq i\leq s-2$,
\item[]$\alpha'_{s-1}=-D_{2s-3}+D_{2s-2}+D_{2s-1}-D_{2s}-D_{2s+1}$,
\item[]$\alpha'_s=-D_{2s-1}+D_{2s}+D_{2s+1}$.
\end{itemize}
\end{itemize}

The Luna diagram is as follows.
\[\begin{picture}(17700,4200)(-300,-2100)
\put(0,0){\usebox{\edge}}
\put(1800,0){\usebox{\susp}}
\put(5400,0){\usebox{\edge}}
\put(9900,0){\usebox{\edge}}
\put(11700,0){\usebox{\susp}}
\put(15300,0){\usebox{\rightbiedge}}
\multiput(0,0)(9900,0){2}{\multiput(0,0)(5400,0){2}{\multiput(0,0)(1800,0){2}{\usebox{\aone}}}}
\multiput(0,900)(9900,0){2}{\line(0,1){1200}}
\put(0,2100){\line(1,0){9900}}
\multiput(1800,900)(9900,0){2}{\line(0,1){900}}
\put(1800,1800){\line(1,0){8000}}
\put(10000,1800){\line(1,0){1700}}
\multiput(5400,900)(9900,0){2}{\line(0,1){600}}
\put(5400,1500){\line(1,0){4400}}
\put(10000,1500){\line(1,0){1600}}
\put(11800,1500){\line(1,0){3500}}
\multiput(7200,900)(9900,0){2}{\line(0,1){300}}
\put(7200,1200){\line(1,0){2600}}
\put(10000,1200){\line(1,0){1600}}
\put(11800,1200){\line(1,0){3400}}
\put(15400,1200){\line(1,0){1700}}
\multiput(1800,-900)(8100,0){2}{\line(0,-1){1200}}
\put(1800,-2100){\line(1,0){8100}}
\multiput(3600,-1800)(0,300){3}{\line(0,1){150}}
\put(3600,-1800){\line(1,0){6200}}
\put(10000,-1800){\line(1,0){1700}}
\put(11700,-1800){\line(0,1){900}}
\put(5400,-900){\line(0,-1){600}}
\put(5400,-1500){\line(1,0){4400}}
\put(10000,-1500){\line(1,0){1600}}
\put(11800,-1500){\line(1,0){1700}}
\multiput(13500,-1500)(0,300){2}{\line(0,1){150}}
\multiput(7200,-900)(8100,0){2}{\line(0,-1){300}}
\put(7200,-1200){\line(1,0){2600}}
\put(10000,-1200){\line(1,0){1600}}
\put(11800,-1200){\line(1,0){1600}}
\put(13600,-1200){\line(1,0){1700}}
\multiput(0,600)(1800,0){2}{\usebox{\toe}}
\put(5400,600){\usebox{\toe}}
\put(11700,600){\usebox{\tow}}
\multiput(15300,600)(1800,0){2}{\usebox{\tow}}
\end{picture}\]

Consider the quotient by $\Delta'=\{D_{2i}:1\leq i\leq s\}$.

$\Sigma/\Delta'=\{\alpha_2+\alpha'_1,\ldots,\alpha_s+\alpha'_{s-1}\}$.
\[\begin{picture}(17700,2550)(-300,-1650)
\put(0,0){\usebox{\edge}}
\put(1800,0){\usebox{\susp}}
\put(5400,0){\usebox{\edge}}
\put(9900,0){\usebox{\edge}}
\put(11700,0){\usebox{\susp}}
\put(15300,0){\usebox{\rightbiedge}}
\multiput(0,0)(9900,0){2}{\multiput(0,0)(5400,0){2}{\multiput(0,0)(1800,0){2}{\usebox{\wcircle}}}}
\multiput(1800,-300)(8100,0){2}{\line(0,-1){1350}}
\put(1800,-1650){\line(1,0){8100}}
\multiput(3600,-1350)(0,300){3}{\line(0,1){150}}
\put(3600,-1350){\line(1,0){6200}}
\put(10000,-1350){\line(1,0){1700}}
\put(11700,-1350){\line(0,1){1050}}
\put(5400,-300){\line(0,-1){750}}
\put(5400,-1050){\line(1,0){4400}}
\put(10000,-1050){\line(1,0){1600}}
\put(11800,-1050){\line(1,0){1700}}
\multiput(13500,-1050)(0,300){2}{\line(0,1){150}}
\multiput(7200,-300)(8100,0){2}{\line(0,-1){450}}
\put(7200,-750){\line(1,0){2600}}
\put(10000,-750){\line(1,0){1600}}
\put(11800,-750){\line(1,0){1600}}
\put(13600,-750){\line(1,0){1700}}
\end{picture}\]
It is a spherical system obtained by parabolic induction from the case 2 of \cite{BP15}. 
We have $S_{\Delta'}=\{\alpha_1,\alpha'_s\}$.

\subsubsection{Type $\mathsf C$} \label{sss:typeC}

\paragraph{\textbf{a) Tail case.}}
Localizing the spherical systems of the cases 
\ref{sss:CCacy} ($q>2$) and \ref{sss:CCcay} ($p>2$)
in $\supp\Sigma$ we obtain the following spherical system,
which we label as $\mathsf{a^y}(2,2)+\mathsf c(t)$, 
for a group of semisimple type $\mathsf A_2\times\mathsf C_{t+1}$ with $t\geq2$.
\begin{itemize}
\item[]$S^\mathrm p=\{\alpha'_4,\ldots,\alpha'_{t+1}\}$.
\item[]$\Sigma=\{\alpha_1,\alpha_2,\ \alpha'_1,\alpha'_2,\ \alpha'_2+2(\alpha'_3+\ldots+\alpha'_{t})+\alpha'_{t+1}\}$.
\item[]$\mathrm A=\{D_1,\ldots,D_5\}$ 
with $\Delta=\mathrm A\cup\{D_6\}$ and full Cartan pairing as follows:
\begin{itemize}
\item[]$\alpha_1=-D_2+D_3+D_4-D_5$,
\item[]$\alpha_2=D_1+D_2-D_3$,
\item[]$\alpha'_1=-D_3+D_4+D_5$,
\item[]$\alpha'_2=-D_1+D_2+D_3-D_4-D_6$,
\item[]$\sigma_5=-D_5+D_6$.
\end{itemize}
\end{itemize}

The Luna diagram is as follows.
\[\begin{picture}(15600,2850)(-300,-1350)
\put(0,0){\usebox{\edge}}
\multiput(0,0)(1800,0){2}{\usebox{\aone}}
\put(4500,0){
\multiput(0,0)(1800,0){3}{\usebox{\edge}}
\put(5400,0){\usebox{\susp}}
\put(9000,0){\usebox{\leftbiedge}}
\multiput(0,0)(1800,0){2}{\usebox{\aone}}
}
\put(8100,0){\usebox{\gcircle}}
\multiput(0,-900)(6300,0){2}{\line(0,-1){450}}
\put(0,-1350){\line(1,0){6300}}
\multiput(0,900)(4500,0){2}{\line(0,1){600}}
\put(0,1500){\line(1,0){4500}}
\multiput(1800,900)(4500,0){2}{\line(0,1){300}}
\put(1800,1200){\line(1,0){2600}}
\put(4600,1200){\line(1,0){1700}}
\put(1800,600){\usebox{\tow}}
\put(4500,600){\usebox{\toe}}
\end{picture}\]

Consider the quotient by $\Delta'=\{D_2,D_4\}$.

$\Sigma/\Delta'=\{\alpha_1+\alpha'_2,\ \alpha'_2+2(\alpha'_3+\ldots+\alpha'_{t})+\alpha'_{t+1}\}$.
\[\begin{picture}(15600,1650)(-300,-750)
\put(0,0){\usebox{\edge}}
\multiput(0,0)(1800,0){2}{\usebox{\wcircle}}
\put(4500,0){
\multiput(0,0)(1800,0){3}{\usebox{\edge}}
\put(5400,0){\usebox{\susp}}
\put(9000,0){\usebox{\leftbiedge}}
\multiput(0,0)(1800,0){2}{\usebox{\wcircle}}
}
\put(8100,0){\usebox{\gcircle}}
\multiput(0,-300)(6300,0){2}{\line(0,-1){450}}
\put(0,-750){\line(1,0){6300}}
\end{picture}\]
It is a spherical system obtained by parabolic induction from the case 42 of \cite{BP15},
already considered in Section~\ref{sss:reductive}. 
We have $S_{\Delta'}=\{\alpha_2,\alpha'_1\}$.

\paragraph{\textbf{b) Collapsed tail.}}
Localizing the spherical systems of the cases 
\ref{sss:CCacy} ($q=2$) and \ref{sss:CCcay} ($p=2$)
in $\supp\Sigma$ we obtain the spherical system $\mathsf{ab^y}(2,2)$ 
for a group of semisimple type $\mathsf A_2\times\mathsf B_2$, 
a particular case of the spherical system obtained above in Section~\ref{sss:typeB}. 

\subsubsection{Type $\mathsf D$} \label{sss:typeD}

\paragraph{\textbf{a) Tail case.}}
Localizing the spherical systems of the cases 
\ref{sss:BDady} ($0<r<q-1$), \ref{sss:DDady} ($0<r<q-1$) and \ref{sss:DDday} ($0<r<p-1$)
in $\supp\Sigma$ we obtain the following spherical system 
for a group of semisimple type $\mathsf A_s\times\mathsf D_{s+t}$ with $t\geq2$.
\begin{itemize}
\item[]$S^\mathrm p=\{\alpha'_{s+2},\ldots,\alpha'_{s+t}\}$.
\item[]$\Sigma=\{\alpha_1,\ldots,\alpha_s,\ \alpha'_1,\ldots,\alpha'_s,\ 2(\alpha'_{s+1}+\ldots+\alpha'_{s+t-2})+\alpha'_{s+t-1}+\alpha'_{s+t}\}$.
\item[]$\mathrm A=\{D_1,\ldots,D_{2s+1}\}$ 
with $\Delta=\mathrm A\cup\{D_{2s+2}\}$ and full Cartan pairing as follows:
\begin{itemize}
\item[]$\alpha_1=D_1+D_2-D_3$,
\item[]$\alpha_i=-D_{2i-2}+D_{2i-1}+D_{2i}-D_{2i+1}$ for $2\leq i\leq s$,
\item[]$\alpha'_i=-D_{2i-1}+D_{2i}+D_{2i+1}-D_{2i+2}$ for $1\leq i\leq s$,
\item[]$\sigma_{2s+1}=-2D_{2s+1}+2D_{2s+2}$.
\end{itemize}
\end{itemize}

It is the case 60 of \cite{BP5}, labeled as $\mathsf{a^y}(s,s)+\mathsf d(t)$.

\paragraph{\textbf{b) Collapsed tail.}}
Localizing the spherical systems of the cases 
\ref{sss:BDady} ($r=q-1$), \ref{sss:DDady} ($r=q-1$) and \ref{sss:DDday} ($r=p-1$)
in $\supp\Sigma$ we obtain the following spherical system 
for a group of semisimple type $\mathsf A_s\times\mathsf D_{s+1}$.
\begin{itemize}
\item[]$S^\mathrm p=\emptyset$.
\item[]$\Sigma=\{\alpha_1,\ldots,\alpha_s,\ \alpha'_1,\ldots,\alpha'_s,\alpha'_{s+1}\}$.
\item[]$\mathrm A=\{D_1,\ldots,D_{2s+2}\}=\Delta$ 
with Cartan pairing as follows:
\begin{itemize}
\item[]$\alpha_1=D_1+D_2-D_3$,
\item[]$\alpha_i=-D_{2i-2}+D_{2i-1}+D_{2i}-D_{2i+1}$ for $2\leq i\leq s-1$,
\item[]$\alpha_s=-D_{2s-2}+D_{2s-1}+D_{2s}-D_{2s+1}-D_{2s+2}$,
\item[]$\alpha'_i=-D_{2i-1}+D_{2i}+D_{2i+1}-D_{2i+2}$ for $1\leq i\leq s-1$,
\item[]$\alpha'_s=-D_{2s-1}+D_{2s}+D_{2s+1}-D_{2s+2}$,
\item[]$\alpha'_{s+1}=-D_{2s-1}+D_{2s}-D_{2s+1}+D_{2s+2}$.
\end{itemize}
\end{itemize}

It is the case 40 of \cite{BP5}, labeled as $\mathsf{ad^y}(s,s+1)$
or S-10 in \cite{Bra}, and considered also in \cite[Section~5]{BGM}
as the spherical system of the comodel wonderful variety of cotype
$\mathsf D_{2(s+1)}$.

\section{Projective normality}

This section is devoted to prove the following result, that we need in
order to study the singularities of closures of spherical nilpotent $K$-orbits
in $\gop$.

\begin{theorem}	\label{teo: projnorm}
Let $(\mathfrak g,\mathfrak k)$ be a classical symmetric pair of non-Hermitian type, let $\calO \subset \gop$ be a spherical nilpotent $K$-orbit. If $(\mathfrak g,\mathfrak k) = (\mathfrak{sp}(2p+2q), \mathfrak{sp}(2p)+\mathfrak{sp}(2q))$, assume that the signed partition of $\mathcal O$ is neither $(+3^4,+1^{2p-8})$ nor $(-3^4,-1^{2q-8})$ (Cases \ref{sss:CCabx} and \ref{sss:CCbax} in Appendix \ref{A}). Let $X$ be the wonderful $K$-variety associated to $\calO$, then the multiplication of sections
$$
	m_{\calL,\calL'} \colon \grG(X, \calL) \otimes \grG(X, \calL') \lra \grG(X, \calL \otimes \calL') 
$$ is surjective for all globally generated line bundles $\calL, \calL' \in \Pic(X)$.
\end{theorem}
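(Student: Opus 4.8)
The plan is to convert the surjectivity of each $m_{\calL,\calL'}$ into a purely combinatorial additivity statement about the weight sets of the sections, and then verify it case by case using the reductions of Section~1. Since $X$ is wonderful, $\Pic(X)=\mathbb Z\Delta_X$ and the globally generated line bundles are exactly the nonnegative combinations of the colors, $\calL_\lambda=\calO(\sum_{D}n_D D)$ with all $n_D\geq0$. Each $\grG(X,\calL_\lambda)$ is a multiplicity-free $K$-module, and its set of highest weights is
\[
\Lambda(\calL_\lambda)=\Big\{\lambda^\ast-\sum_i a_i\sigma_i \ :\ a_i\in\mathbb N,\ \ n_D-\sum_i a_i\,c_X(D,\sigma_i)\geq0 \ \ \forall D\in\Delta_X\Big\},
\]
where $\lambda^\ast$ is the dominant weight of the canonical section and $\sigma_i$ runs over $\Sigma_X$; the two conditions record that the corresponding $B$-eigensection has nonnegative order along the $K$-stable divisors and along the colors. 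If $s_\mu,s_{\mu'}$ are highest weight sections of $\calL_\lambda,\calL_{\lambda'}$ of weights $\mu\in\Lambda(\calL_\lambda)$, $\mu'\in\Lambda(\calL_{\lambda'})$, then their product is a nonzero highest weight section of weight $\mu+\mu'$, so the image of $m_{\calL_\lambda,\calL_{\lambda'}}$ contains $V(\mu+\mu')$ for every such pair. Hence $m_{\calL,\calL'}$ is surjective as soon as the weight sets are Minkowski-additive,
\[
\Lambda(\calL\otimes\calL')=\Lambda(\calL)+\Lambda(\calL'),
\]
i.e.\ the integer polytopes cut out by the inequalities $\sum_i a_i c_X(D,\sigma_i)\leq n_D$ have the integer decomposition property. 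This reduces the whole theorem to a combinatorial check.

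First I would reduce to the localizations computed in Section~1. Write $X$ by parabolic induction from the localization $Y$ in $S_L=(\supp\Sigma)\cup S^\mathrm p$: the simple roots $\beta\in S\smallsetminus S_L$ move one color $D_\beta$ each, all of type (b), with $c_X(D_\beta,\sigma)=\langle\beta^\vee,\sigma\rangle$. Since $\beta\notin\supp\sigma$ for every $\sigma\in\Sigma_X$, one has $c_X(D_\beta,\sigma)\leq0$, so the inequality attached to $D_\beta$ is automatically satisfied for $a_i,n_{D_\beta}\geq0$; these colors also do not interact with the core colors. Consequently $\Lambda(\calL_\lambda)=\theta_\lambda+\Lambda_Y(\calL_\lambda|_Y)$, where $\theta_\lambda=\sum_{\beta\in S\smallsetminus S_L}n_{D_\beta}\,\omega_\beta$ depends linearly on $\lambda$. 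Therefore the additivity for $X$ is equivalent to the additivity for the localization in $\supp\Sigma$, and it suffices to treat the finite list of cores of Section~1: the symmetric cases, the reductive case~42 of \cite{BP15}, and the tail and collapsed-tail families of types $\mathsf B$, $\mathsf C$, $\mathsf D$ (case~46 being attached only to the excluded pairs).

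For the symmetric cores (cases 2, 5, 6, 9, 15 of \cite{BP15}) the required additivity is exactly the projective normality of the corresponding wonderful symmetric varieties, a known result (Chiriv\`\i--Maffei), which I would cite rather than rederive. For the reductive case~42 and for each tail family I would instead verify the integer decomposition property directly from the explicit Cartan pairings recorded in Section~1 (the relations $\alpha_1=D_1+D_2-D_3$, $\alpha_i=-D_{2i-2}+D_{2i-1}+D_{2i}-D_{2i+1}$, $\alpha'_i=-D_{2i-1}+D_{2i}+D_{2i+1}-D_{2i+2}$, and their type-$\mathsf C$/$\mathsf D$ variants), proceeding by induction on the rank $s$ and on the tail length $t$. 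The base of the induction is a rank one or rank two spherical system, where the polytopes are intervals or planar polygons whose decomposition is immediate; the inductive step uses the subsystem and quotient compatibilities of Section~1 to pass from the tail case to the collapsed tail and to reduce $(s,t)$ to $(s-1,t)$ and $(s,t-1)$.

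The main obstacle is the explicit combinatorics of these tail families, where the near-unimodular shape of the pairing matrices must be exploited to guarantee that the Minkowski sum of lattice polytopes acquires no extra interior lattice points. The delicate point is that this additivity holds in every type-$\mathsf C$ and type-$\mathsf D$ tail but fails precisely for the type-$\mathsf C$ reductive core, case~46 ($p=5$) of \cite{BP15}, which underlies Cases~\ref{sss:CCabx} and~\ref{sss:CCbax}: there a target highest weight cannot be split as $\mu+\mu'$ with $\mu,\mu'$ admissible, and checking that it is not recovered from lower Cartan components either shows the multiplication is genuinely non-surjective. This is exactly why those two cases are excluded, and isolating this single failure while establishing additivity everywhere else is where I expect most of the work to go; should additivity unexpectedly fail in a non-excluded core, the fallback would be to produce the missing components as lower constituents of the tensor products, but I expect the clean additivity to suffice in all the non-excluded cases.
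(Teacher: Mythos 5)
Your reduction to parabolic induction and localization is sound and matches the paper's first step, but the core of your strategy fails: the Minkowski additivity $\Lambda(\calL\otimes\calL')=\Lambda(\calL)+\Lambda(\calL')$ that you propose to verify is \emph{false} in essentially every nontrivial case covered by the theorem, not just in the two excluded ones. The obstruction is the existence of nontrivial \emph{low triples} $(D,E,F)$ with $\grg=D+E-F\in\mN\grS\smallsetminus\{0\}$: for such a triple the component $s^{\grg}V_F\subset\grG(X,\calL_{D+E})$ cannot be written as a product of two highest weight sections, since $F=D'+E'$ with $D'\leq_\grS D$, $E'\leq_\grS E$ would force $D'=D$, $E'=E$ and hence $\grg=0$. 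A concrete instance in the family $\mathsf a^{\mathsf y}(2,2)+\mathsf c(t)$: for the low triple $(D_2,D_3,D_1+D_4+D_5)$ with $\grg=\grs_2+\grs_5$ one has $\Lambda(\calL_{D_2})=\{\gro_{D_2},\gro_{D_1}\}$ and $\Lambda(\calL_{D_3})=\{\gro_{D_3}\}$, and the weight $\gro_{D_1}+\gro_{D_4}+\gro_{D_5}=\gro_1+\gro_2+2\gro_1'$ occurring in $\grG(X,\calL_{D_2+D_3})$ is not a sum of elements of these two sets. Yet $m_{D_2,D_3}$ \emph{is} surjective: the missing component arises as a lower Cartan component of $V_{D_2}\otimes V_{D_3}$, and proving this requires exhibiting an explicit equivariant projection $\pi$ and checking $\pi(h_2\otimes h_3)\neq0$. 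What you relegate to an ``unlikely fallback'' is in fact the entire content of the argument for the three basic families; the integer decomposition property you hope to establish does not hold, so your induction on $(s,t)$ has no true statement to propagate.

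Two further points. First, even granting the fallback, you give no mechanism for carrying it out: one needs (i) a bound on the height of positive parts of covering differences to reduce to finitely many \emph{fundamental} low triples, (ii) a classification of those triples, and (iii) for each one an explicit invariant vector $h_D\in V_D^*$ and a nonzero projection of $h_D\otimes h_E$ onto the $V_F^*$-isotypic component --- none of which follows from the near-unimodularity of the Cartan pairing alone. Second, your diagnosis of the excluded cases is also off: there the relevant component ($V_{D_1+D_2+D_6}\not\subset V_{D_3}^{\otimes2}$) fails to appear even among the lower Cartan components, which is a strictly stronger failure than non-additivity, since non-additivity is ubiquitous and compatible with surjectivity.
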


We point out that multiplication is not surjective if $(\mathfrak g,\mathfrak k) = (\mathfrak{sp}(2p+2q), \mathfrak{sp}(2p)+\mathfrak{sp}(2q))$ and $\calO$ is the spherical nilpotent orbit corresponding to the signed partitions $(+3^4,+1^{2p-8})$ or $(-3^4,-1^{2q-8})$), see Example~\ref{example:counterexample} below. These cases will be treated separately in Section~\ref{ss:symplecticcases} with an {\it ad hoc} argument.

Let us briefly recall here some generalities about the multiplication of sections of line bundles on a wonderful variety, for more details and references see \cite{BGM}.

Let $X$ be a wonderful $K$-variety with set of spherical roots $\grS$
and set of colors $\grD$. The classes of colors form a free basis for
the Picard group of $X$, and for the semigroup of globally generated
line bundles. Therefore the Picard group of $X$ is identified with $\mZ \grD$, and the semigroup of globally generated line bundles is identified with $\mN\grD$. Given $E,F \in \mN \grD$ we will also write $m_{E,F}$ meaning $m_{\calL_E, \calL_F}$.

Given $D \in \mZ \grD$ we denote by $\calL_D \in
\Pic(X)$ the corresponding line bundle, and we fix $s_D \in
\grG(X,\calL_D)$ a section whose associated divisor is $D$. 
Recall that every line
bundle on $X$ has a unique
$K$--linearization. 
Then $s_D$
is a highest weight vector, and we denote by $V_D \subset
\grG(X,\calL_D)$ the $K$-submodule generated by $s_D$. Since $X$ is
a spherical variety, $\grG(X,\calL_D)$ is a multiplicity-free
$K$-module, hence $V_D$ is uniquely determined and $s_D$ is uniquely
determined up to a scalar factor.

By identifying $\grS$ with the set of $K$-stable prime divisors of $X$,
every $\grs \in \mZ \grS$ determines a line bundle $\calL_{\grs} \in
\Pic(X)$, and the map $\mZ \grS \lra \Pic(X)$ is injective. The line
bundle $\calL_\grs$ is effective if and only if $\grs \in \mN\grS$,
and for all $\grs \in \mZ \grS$ we fix a section $s^\grs \in
\grG(X,\calL_\grs)$ whose associated divisor is $\grs$. Such a section
is a highest weight vector of weight $0$, and is uniquely determined
up to a scalar factor.

By identifying $\Pic(X)$ with $\mZ \grD$, we regard $\mZ \grS$ as a
sublattice of $\mZ \grD$. This defines a partial order $\leq_\grS$ on
$\mZ \grD$ as follows: if $D,E \in \mZ \grD$, then $D
\leq_\grS E$ if and only if $E-D \in \mN\grS$. This allows to describe
the space of global sections of $\calL_E$ as follows 
$$
\grG(X, \calL_E) = \bigoplus_{F \in \mN \grD \; : \; F \leq_\grS E} s^{E-F} V_F
$$

In particular, if $E \in \mN \grD$, we have that $\grG(X, \calL_D)$ is
an irreducible $K$-module if and only if $E$ is minuscule in $\mN\grD$
w.r.t.\ $\leq_\grS$ or zero, that is, if $F \in \mN \grD$ and $F \leq_\grS E$
then it must be $F = E$.

To any line bundle $\calL_E$ on $X$, we attach two characters $\xi_E$ and $\gro_E$ as follows. Let $H$ be the stabilizer of a point $x_0$ in the open orbit of $X$, fix a maximal torus $T$ and a Borel subgroup $B$ such that $T\subset B$, and let
$y_0$ be the point fixed by the opposite Borel of $B$. Then we denote $\xi_E \in \Hom(H,\mathbb C^\times)$ the character given by the action of $H$ over the fiber $\calL_{E,x_0}$, and by $\omega_E \in \Hom(T,\mathbb C^\times)$ the character given by the action of $T$ over the fiber $\calL_{E,y_0}$.

If $E\in \mN\Delta$ then the set of sections $V_E\subset\Gamma(X,\calL_E)$
does not vanish on the closed orbit of $X$, so it defines a regular map
$\phi_E\colon X\lra \mP(V_E^*)$. We choose a non-zero element $h_E \in
V_E^*$ in the line $\phi_E(x_0)$. 
Notice that $V_E$ is the irreducible
module of highest weight $\omega_E$ and that $h_E$ is determined by
the condition $g\cdot h_E = \xi_E(g)h_E$
for all $g\in H$.

For $D \in \grD$, the weight $\omega_D$ is combinatorially described as follows:
if $D \in \grD^{\mathrm{2a}}$ and $\gra \in S$ is such that $D \in \grD(\gra)$, then $\gro_D = 2 \gro_\gra$, otherwise $\gro_D = \sum\gro_\gra$ for all $\gra \in S$ such that $D \in \grD(\gra)$.

\subsection{General reductions}

By making use of quotients and parabolic inductions, it is possible to reduce the study of the multiplication maps. We recall such reductions from \cite{BGM}.

\begin{lemma}[{\cite[Corollary 1.4]{BGM}}]	\label{lem: projnorm quotients}
Let $X$ be a wonderful variety with set of colors $\grD$, let $X'$ be a quotient of $X$ by a distinguished subset $\grD_0 \subset \grD$ with set of colors $\grD'$ and identify $\grD'$ with $\grD \senza \grD_0$. If $D \in \mN \grD$ and $\supp(D) \cap \grD_0 = \vuoto$ and if $\calL_D \in \Pic(X)$ and $\calL'_D \in \Pic(X')$ are the line bundles corresponding to $D$ regarded as an element in $\mN \grD$ and in $\mN\grD'$, then $\grG(X, \calL_D) = \grG(X', \calL'_D)$.

In particular, if $m_{D,E}$ is surjective for all $D,E \in \mN\grD$, then  $m_{D', E'}$ is surjective for all $D', E' \in \mN \grD'$.
\end{lemma}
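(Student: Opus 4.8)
The plan is to prove the equality of section spaces geometrically, using the quotient morphism, and then to deduce the surjectivity statement formally. By the geometric interpretation of the quotient operation recalled above, the distinguished subset $\grD_0$ corresponds to a surjective $K$-equivariant morphism with connected fibers $\varphi\colon X \lra X'$, under which the set of colors of $X'$ is identified with $\grD' = \grD\senza\grD_0$. Since $X$ and $X'$ are wonderful, hence smooth and complete, and $\varphi$ is proper, surjective and has connected fibers onto the normal variety $X'$, one has $\varphi_*\calO_X = \calO_{X'}$.

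The next step is to pin down the effect of $\varphi$ on Picard groups. The non-contracted colors, namely those in $\grD'$, are exactly the proper transforms of the colors of $X'$, each with multiplicity one; hence $\varphi^*$ identifies $\Pic(X') = \mZ\grD'$ with the sublattice of $\Pic(X) = \mZ\grD$ spanned by $\grD'$, carrying the color $D$ of $X'$ to the color $D$ of $X$. In particular $\varphi^*\calL'_D = \calL_D$ for every $D \in \mN\grD'$. Granting this, the first assertion follows by pushing forward and applying the projection formula together with $\varphi_*\calO_X = \calO_{X'}$:
\[
\grG(X,\calL_D) = \grG(X,\varphi^*\calL'_D) = \grG(X',\calL'_D\otimes\varphi_*\calO_X) = \grG(X',\calL'_D),
\]
the identification being realized by the pullback $\varphi^*$, which is $K$-equivariant and therefore an isomorphism of $K$-modules. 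I expect this Picard-group compatibility to be the main obstacle: it is the only point that really uses the fine structure of the quotient morphism, whereas everything else is formal.

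For the final assertion I would transport multiplication along $\varphi^*$. Given $D',E'\in\mN\grD'$ their sum again lies in $\mN\grD'$, so by the first part the three pullbacks $\grG(X',\calL'_{D'})\to\grG(X,\calL_{D'})$, $\grG(X',\calL'_{E'})\to\grG(X,\calL_{E'})$ and $\grG(X',\calL'_{D'+E'})\to\grG(X,\calL_{D'+E'})$ are isomorphisms of $K$-modules. Since pullback of sections is multiplicative, these isomorphisms intertwine the map $m_{D',E'}$ on $X'$ with the map $m_{D',E'}$ on $X$; the latter is surjective by hypothesis (it is the instance of $m_{D,E}$ with $D=D'$ and $E=E'$ regarded in $\mN\grD$), so the former is surjective as well.

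As a combinatorial cross-check, consistent with the decomposition $\grG(X,\calL_E)=\bigoplus_{F\leq_\grS E}s^{E-F}V_F$ recalled above, one can see directly that the indexing sets agree. If $D\in\mN\grD'$ and $F\in\mN\grD$ satisfy $\sigma:=D-F\in\mN\grS$, then reading off the coefficient of each $D_0\in\grD_0$ in the identity $D=F+\sigma$ in $\mZ\grD$ gives $c(D_0,\sigma)=-a_{D_0}\leq0$, where $a_{D_0}\geq0$ is the multiplicity of $D_0$ in $F$. Pairing with a positive combination of $\grD_0$ witnessing that $\grD_0$ is distinguished forces $a_{D_0}=0$ and $c(D_0,\sigma)=0$ for all $D_0\in\grD_0$; thus $F\in\mN\grD'$ and $\sigma\in(\mN\grS)/\grD_0=\mN\grS_{X'}$. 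Hence the same summands occur on both sides, the matching of highest weights $\omega_F$ being automatic from the $K$-equivariance of $\varphi^*$ --- which is precisely why the geometric route is the cleaner one.
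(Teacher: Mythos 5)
The paper does not prove this lemma itself — it is quoted verbatim from \cite[Corollary~1.4]{BGM} — and your argument is correct and follows essentially the same standard route as that reference: $\varphi_*\calO_X=\calO_{X'}$ plus the identification of $\Pic(X')$ with $\mZ\grD'\subset\mZ\grD$ under $\varphi^*$, with the multiplicativity of $\varphi^*$ giving the statement about $m_{D',E'}$ for free. The only step you assert rather than prove is that $\varphi^*\calO_{X'}(D')=\calO_X(D)$ with multiplicity one; this is true and can be closed quickly: the orbit analysis shows $\varphi^{-1}(D')$ is irreducible with support $D$ (no $K$-stable divisor or contracted color can lie over a color), so $\varphi^*\calO_{X'}(D')=\calO_X(mD)$, and comparing the $B$-weight of $\varphi^*s_{D'}$ (which is $\gro_{D'}$) with that of $s_D^{\,m}$ (which is $m\gro_D$), together with the fact that $D$ and $D'$ are moved by the same simple roots and $\gro_D\neq 0$, forces $m=1$. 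Your combinatorial cross-check is a correct independent verification that the two section spaces have the same isotypic decomposition, and it is exactly the computation one needs anyway to see that $\leq_{\grS'}$ on $\mN\grD'$ is the restriction of $\leq_\grS$.
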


\begin{lemma}[{\cite[Proposition 1.6]{BGM}}] \label{lem: projnorm parabolic induction}
Let $X$ be a wonderful variety and suppose that $X$ is the parabolic
induction of a wonderful variety $X'$. Then for all $\calL,\calL'$ in
$\Pic(X)$ the multiplication $m_{\calL, \calL'}$ is surjective if and
only if the multiplication $m_{\calL|_{X'}, \calL'|_{X'}}$ is surjective.
\end{lemma}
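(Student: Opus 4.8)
The plan is to realize $X'$ as a fiber of the natural fibration $\pi\colon X=K\times_Q Y\to K/Q$ (here $X'=Y$ and $Q=L\,Q^{\mathrm u}\supset B^-$) and to reduce the whole statement to the behaviour of the restriction map $\rho\colon\grG(X,\calL)\to\grG(X',\calL|_{X'})$. First I would arrange $z\in X'$, the fiber over the $B^-$-fixed point $eQ$. The colors of $X$ split as $\grD_X=\grD_{X'}\sqcup\{D_\alpha:\alpha\in S\senza S_L\}$: the $D\in\grD_{X'}$ are the $B$-stable divisors dominating $K/Q$, while the $D_\alpha$ are the pullbacks $\pi^{-1}(\,\cdot\,)$ of the colors of the flag variety $K/Q$. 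Consequently $\calL_{D_\alpha}=\pi^*\calM_\alpha$ for a line bundle on $K/Q$, so it restricts trivially to the fiber $X'$, while $\calL_D|_{X'}$ is the color line bundle $\calL_D$ on $X'$ for $D\in\grD_{X'}$. Thus, identifying $\Pic(X)=\mZ\grD_X$ and $\Pic(X')=\mZ\grD_{X'}$, the restriction $\calL_E\mapsto\calL_E|_{X'}$ is the projection $E\mapsto E|_{X'}$ killing the new colors, and $\grG(X',\calL_E|_{X'})=\grG(X',\calL_{E|_{X'}})$.

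The crux is the claim that $\rho\colon\grG(X,\calL_E)\to\grG(X',\calL_{E|_{X'}})$ is surjective and identifies the isotypic components of the two multiplicity-free modules. I would read this off the decomposition $\grG(X,\calL_E)=\bigoplus_{F\,:\,E-F\in\mN\grS}s^{E-F}V_F$ (and its analogue for $X'$, recalling $\grS_{X'}=\grS_X=\grS$): the spherical-root section $s^{E-F}$ restricts to the corresponding section $s^{(E-F)|_{X'}}$ on $X'$, and the $K$-module $V_F$ (generated by $s_F$, whose restriction is a nonzero multiple of $s_{F|_{X'}}$) restricts onto the $L$-module $V_{F|_{X'}}$, so $\rho$ carries the summand indexed by $F$ onto the one indexed by $F|_{X'}$. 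That $F\mapsto F|_{X'}$ is a bijection of index sets rests on two facts about parabolic induction. First, for $D\in\grD_{X'}$ the moving roots lie in $S_L$, whence $c_X(D,\grs)=c_{X'}(D,\grs)$; so the projection to $\mZ\grD_{X'}$ intertwines the two embeddings of $\mZ\grS$, and injectivity of $\mZ\grS\hookrightarrow\mZ\grD_{X'}$ forces $F\mapsto F|_{X'}$ to be injective. Second, for $\alpha\in S\senza S_L$ one has $c_X(D_\alpha,\grs)=\langle\alpha^\vee,\grs\rangle\leq0$ since $\alpha\notin\supp\grs$; hence, given $F'$ with $E|_{X'}-F'\in\mN\grS$, the lift $F=E-\sum_\grs n_\grs\grs$ (where $\sum_\grs n_\grs\,\grs|_{X'}=E|_{X'}-F'$, $n_\grs\geq0$) has new part $\geq E^{\mathrm{new}}\geq0$ and old part $F'\geq0$, so $F\in\mN\grD_X$ and $F|_{X'}=F'$, giving surjectivity.

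Granting the claim, both implications are formal. Restriction of sections is multiplicative, so the square with top $m^X_{\calL,\calL'}$, bottom $m^{X'}_{\calL|_{X'},\calL'|_{X'}}$, left $\rho\otimes\rho$ and right $\rho$ commutes; as $\rho\otimes\rho$ is surjective we get $\rho(\operatorname{im}m^X)=\operatorname{im}m^{X'}$. If $m^X$ is surjective, then $\operatorname{im}m^{X'}=\rho(\grG(X,\calL\otimes\calL'))$ is everything (since $\rho$ is onto), so $m^{X'}$ is surjective. Conversely, if $m^{X'}$ is surjective then $\rho(\operatorname{im}m^X)$ is the whole of $\grG(X',\calL|_{X'}\otimes\calL'|_{X'})$; now $\operatorname{im}m^X$ is a $K$-submodule of the multiplicity-free module $\grG(X,\calL\otimes\calL')=\bigoplus_G s^{\,\cdots}V_G$, and since $\rho$ sends the distinct components $V_G$ into distinct components $V_{G|_{X'}}$ and hits all of them, $\operatorname{im}m^X$ has nonzero projection onto every $V_G$. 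A $K$-submodule of a multiplicity-free module with nonzero projection onto each isotypic component is the whole module, so $m^X$ is surjective.

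The main obstacle is the claim of the second paragraph — the precise control of $\rho$ on isotypic components — and it is exactly here that the two structural features of parabolic induction enter: the new colors $D_\alpha$ being pullbacks from $K/Q$ (which makes $\calL_E\mapsto\calL_E|_{X'}$ the clean projection) and the sign $\langle\alpha^\vee,\grs\rangle\leq0$ for $\alpha\notin\supp\grs$ (which keeps the lifted indices $F$ effective, hence $\rho$ surjective). The converse implication is where multiplicity-freeness of the section ring of a spherical variety is indispensable: it upgrades \emph{surjective after restriction to the fiber} to \emph{surjective on $X$}, something that would fail for a general fibration.
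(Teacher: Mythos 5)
The paper itself offers no proof of this lemma --- it is imported verbatim from \cite[Proposition~1.6]{BGM} --- so your argument can only be measured against that reference's method, which it reconstructs in substance: the fibration $\pi\colon K\times_Q Y\to K/Q$, the splitting of the colors of $X$ into fiber-type colors and pullbacks from $K/Q$, and the component-by-component analysis of the multiplicity-free section spaces are exactly the structural inputs there. Your combinatorial bookkeeping is correct: the compatibility $c_X(D,\grs)=c_{X'}(D,\grs)$ for $D\in\grD_{X'}$, the injectivity of $F\mapsto F|_{X'}$ via the injectivity of $\mZ\grS\to\mZ\grD_{X'}$, the lifting of indices using $c_X(D_\gra,\grs)=\langle\gra^\vee,\grs\rangle\leq 0$ for $\gra\notin\supp\grs$, and the formal deduction of both implications once the behaviour of $\rho$ on isotypic components is known.

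Two points need attention, one of them substantive. First, the crux you isolate --- that $\rho$ carries $s^{E-F}V_F$ \emph{onto} $s^{(E-F)|_{X'}}V_{F|_{X'}}$ and into nothing else --- is asserted rather than proved. The two facts you cite (restriction of $s^{E-F}$ and of $s_F$) only show that the image contains the highest weight line of the target component; since $\rho$ is merely $L$-equivariant and $V_F$ is highly reducible as an $L$-module, a priori $\rho(V_F)$ could spill into components $V_{F''}$ with $F''<_\grS F|_{X'}$. Your forward implication survives (surjectivity of $\rho$ suffices), but your converse genuinely depends on exactness: if the image of one component could cover several target components, a proper sum of components in $\mathrm{Im}(m^X)$ could already map onto everything. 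The missing argument is short and uses precisely the parabolic-induction geometry: since $B^-\subset Q$, the irreducible module $V_F$ equals $U(\mathfrak q)\,s_F$; the fiber $X'$ is $Q$-stable with $Q^{\mathrm u}$ acting trivially on it, hence trivially on $\grG(X',\calL_{F|_{X'}})$ (an action of $Q^{\mathrm u}$ on a line bundle covering the trivial action on the complete variety $X'$ is by characters of a unipotent group, hence trivial); therefore $\rho(V_F)=U(\gol)\,\rho(s_F)=V_{F|_{X'}}$ exactly. Second, your lift in the surjectivity-of-indices step uses that the ``new part'' of $E$ is $\geq 0$, i.e.\ effectivity of $E$ on the colors pulled back from $K/Q$; so what you actually prove is the lemma for globally generated (or at least new-effective) classes. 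That is in fact the correct scope: as literally stated for all of $\Pic(X)$ the equivalence can fail --- take $Y$ a point, so that $X=K/Q$ is a flag variety, and $\calL=\calL_{-D}$, $\calL'=\calL_{2D}$ for a color $D$: on $X$ the multiplication is the zero map into the nonzero space $\grG(X,\calL_D)$, while on the fiber everything is trivially surjective. Since in this paper the lemma is only ever applied to globally generated bundles, nothing downstream is affected, but you should make the hypothesis explicit rather than inherit the loose phrasing of the statement.
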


We now explain how to reduce the study of the multiplications with
respect to wonderful subvarieties.

\begin{lemma}	\label{lem: projnorm localizzazioni}
Let $X$ be a wonderful variety and let $X' \subset X$ be a
wonderful subvariety. If $m_{\calL, \calL'}$ is surjective for all
globally generated $\calL, \calL' \in \Pic(X)$, then $m_{\calL,
  \calL'}$ is surjective for all globally generated $\calL, \calL' \in
\Pic(X')$.
\end{lemma}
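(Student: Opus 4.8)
The plan is to induct on the corank $k=\mathrm{card}(\grS_X\senza\grS_{X'})$, reducing to the case $k=1$ in which $X'$ is a single $K$-stable prime divisor of $X$. Writing $X'=\bigcap_{i\in I}D_i$ as in the subsystems operation, one obtains a chain $X=X_0\supset X_1\supset\cdots\supset X_k=X'$ in which each $X_{j+1}$ is a $K$-stable prime divisor of the wonderful variety $X_j$, corresponding to removing one more spherical root. By induction it then suffices to prove the corank-one statement: if $D\subset X$ is a $K$-stable prime divisor and all multiplications on $X$ are surjective, then all multiplications on $D$ are surjective. So fix such a $D$, let $\grs\in\grS_X$ be the corresponding spherical root, so that $\grS_D=\grS_X\senza\{\grs\}$, and let $i\colon D\incluso X$ be the inclusion.

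First I would set up the transfer of surjectivity along restriction. Let $\calM,\calM'\in\Pic(D)$ be globally generated and suppose they extend, i.e.\ $\calM=i^*\calL$ and $\calM'=i^*\calL'$ with $\calL,\calL'\in\Pic(X)$ globally generated. The restriction maps fit into a commutative square whose top row is $m_{\calL,\calL'}$ (surjective by hypothesis) and whose bottom row is $m_{\calM,\calM'}$; hence if the right-hand vertical restriction $\grG(X,\calL\otimes\calL')\lra\grG(D,\calM\otimes\calM')$ is surjective, then $m_{\calM,\calM'}$ is surjective too. To control this restriction I would use the description $\grG(X,\calL_E)=\bigoplus_{F\leq_\grS E}s^{E-F}V_F$. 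Since $s^\grs$ vanishes exactly along $D$, the summand $s^{E-F}V_F$ restricts to zero precisely when the $\grs$-component of $E-F$ is positive and restricts onto $V_F|_D$ otherwise; thus the image of restriction is the sub-sum over those $F$ with $E-F\in\mN\grS_D$, and $\grG(X,\calL_E)\lra\grG(D,\calL_E|_D)$ is surjective onto the submodule of $\grG(D,\calL_E|_D)$ indexed by the colors of $X$ that survive to $D$.

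The difficulty is then to identify that submodule with all of $\grG(D,\cdot)$, which is governed by the restriction $\Pic(X)\lra\Pic(D)$ on colors. Going through the subsystems operation, each color of $X$ restricts to an effective combination of colors of $D$, and two phenomena affect the colors attached to $\grs$: \emph{splitting} (type $\mathsf A_1\times\mathsf A_1$, $\grs=\gra+\grb$), where a single color of $X$ shared by two orthogonal simple roots restricts to a sum of two distinct colors of $D$; and \emph{halving} (type $\mathsf A_1$, $\grs=2\gra$), where the surviving color keeps its support but its attached weight $\gro$ is halved, so that the restriction of the corresponding generator of $\Pic(X)$ is the square of a generator of $\Pic(D)$. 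In both cases $\Pic(X)\lra\Pic(D)$ fails to be surjective onto the semigroup of globally generated bundles, so there are globally generated $\calM$ on $D$ that do not extend to $X$, and the square argument does not reach them. For every other spherical root the colors are either unchanged or merge, the restriction map is surjective onto globally generated bundles, and the first two steps already conclude.

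The main obstacle is therefore the treatment of these ``new'' globally generated line bundles on $D$ produced by splitting or halving. Here the plan is to isolate the corresponding directions: localizing $D$ (again via the subsystems operation) to the rank-one wonderful subvariety carrying the new color yields one of the rank-one models of Table~\ref{tab:sphroots}, namely a projective space or a quadric, whose projective normality, and hence surjectivity of multiplication, is classical. I would then show that in the relevant multi-degrees the coordinate ring of $D$ factors as a product of such a rank-one contribution with the directions already handled by restriction, so that $m_{\calM,\calM'}$ on $D$ follows by combining the rank-one case with the transfer established above. Verifying this factorization, that the new-color directions are genuinely transverse and reduce to the rank-one models, is the step I expect to require the most care; it is naturally organized according to the finitely many types of spherical roots in Table~\ref{tab:sphroots}.
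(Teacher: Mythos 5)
Your setup — transferring surjectivity through the restriction of sections, and then isolating the failure of $\Pic(X)\lra\Pic(X')$ to hit every globally generated class as the real obstruction — matches the structure of the paper's argument (which, incidentally, treats an arbitrary wonderful subvariety directly, so your induction on corank is harmless but unnecessary). The gap is in your last step. The globally generated bundles on $X'$ that do not extend to $X$ are exactly where the lemma has content, and your plan for them is to localize to a rank-one subvariety and to ``show that in the relevant multi-degrees the coordinate ring of $D$ factors as a product of such a rank-one contribution with the directions already handled by restriction.'' No such factorization holds in general: the multiplication $V_E\otimes V_F\lra\grG(X',\calL_{E+F})$ mixes all colors, and the new colors produced by your splitting phenomenon need not even pair non-positively with all remaining spherical roots (only the halving case lands automatically in the ``harmless'' colors), so they are not confined to a rank-one direction that could be split off. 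As stated, this step is not an argument but a restatement of what must be proved, and I do not see how to complete it along these lines.

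The mechanism the paper uses instead is a padding trick. Set $\grD'_0=\{D\in\grD'\,:\,c(D,\grs)\leq 0\ \forall\grs\in\grS'\}$; every element of $\mN\grD'_0$ is minuscule for $\leq_{\grS'}$, so its space of sections is a single irreducible module. The combinatorics of the restriction map $\rho\colon\mN\grD\lra\mN\grD'$ (this is the content of \cite[\S 1.13]{Ga}, and is the one genuinely combinatorial input) guarantees that every $E\in\mN\grD'$ becomes extendable after adding some $E'\in\mN\grD'_0$, i.e.\ $E+E'\in\rho(\mN\grD)$. Since $E',F'\in\mN\grD'_0$ one has $\grG(X',\calL_{E+E'+F+F'})=\grG(X',\calL_{E+F})\,V_{E'+F'}$ and $\mathrm{Im}(m_{E+E',F+F'})=\mathrm{Im}(m_{E,F})\,V_{E'+F'}$, so surjectivity of $m_{E,F}$ is read off from that of $m_{E+E',F+F'}$, which is covered by your commutative square. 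Two smaller remarks: the restriction $\grG(X,\calL_E)\lra\grG(X',\calL_{\rho(E)})$ is actually surjective (a standard fact for nef line bundles on complete spherical varieties), not merely surjective onto the sub-sum you describe, though this does not by itself rescue your argument since the bundles in question are not of the form $\calL_{\rho(E)}$; and the rank-one wonderful varieties attached to the spherical roots of $\grS'$ are not in general projective spaces or quadrics.
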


\begin{proof}
Denote by $\grS$ and $\grD$ the set of spherical roots and the set of
colors of $X$, and by $\grS'$ and $\grD'$ those of $X'$. The
restriction of line bundles induces a map $\rho \colon \mN\grD \lra \mN
\grD'$, and the restriction of sections $\grG(X, \calL_D) \lra
\grG(X', \calL_{\rho(D)})$ is surjective for all $D \in
\mN\grD$. Given $E,F \in \mN \grD$, the surjectivity
of $m_{\rho(E), \rho(F)}$ follows then from the surjectivity of $m_{E,F}$.

Set
$$
	\grD'_0 = \{D \in \grD' \; : \; c(D,\grs) \leq 0  \quad \forall \grs \in \grS'\}.
$$ 
Notice that every $D \in \mN \grD'_0$ is minuscule
w.r.t. $\leq_{\grS'}$ or zero, namely $\grG(X', \calL_D) = V_D$ for all $D \in
\mN\grD'$. Indeed if $D \in \mN \grD'_0$ and $D-\grs \in \mN \grD$ for
some $\grs \in \mN\grS$, then it follows that $-\grs \in \mN\grD$,
hence both $\grs$ and $-\grs$ define effective divisors on $X'$. On
the other hand the cone of effective divisors of $X'$ contains no line
since $X'$ is complete, therefore it must be $\grs = 0$.

Let $D \in \grD$, reasoning as in \cite[\S 1.13]{Ga} by the
combinatorial description of $\rho$ it follows that for all $D \in
\grD$ there exists $D' \in (\grD' \smallsetminus \grD'_0) \cup \{0\}$
such that $\rho(D) - D' \in \mN \grD'_0$, and conversely for all $D'
\in \grD' \smallsetminus \grD'_0$ there exists $D \in \grD$ with
$\rho(D) - D' \in \mN \grD'_0$.

Let now $E, F \in \mN\grD'$, then by the previous discussion there exist
$E', F' \in \mN\grD'_0$ such that $E+E', F+F' \in \rho(\mN\grD)$. On
the other hand since $E', F' \in \mN \grD'_0$ we have $\grG(X,
\calL_{E+E'+F+F'}) = \grG(X, \calL_{E+F}) V_{E'+F'}$ and
$$
	\mathrm{Im}(m_{E+E', F+F'}) = \mathrm{Im}(m_{E,F}) V_{E'} V_{F'} = \mathrm{Im}(m_{E,F}) V_{E'+F'}.
$$
Therefore the surjectivity of $m_{E,F}$ follows from that of $m_{E+E', F+F'}$.
\end{proof}

A strategy to prove the surjectivity of the multiplication map was described in \cite{CM_projective-normality} for wonderful symmetric varieties and in \cite{BGM} for
general wonderful varieties. Such a strategy reduces the proof of the surjectivity of the multiplication maps for all pair of globally generated line bundles to a finite number of computations, which arise in correspondence to the so-called \textit{fundamental low tiples}. 

Recall from \cite{BGM} that a triple $(D,E,F) \in (\mathbb{N}\Delta)^3$ with $F \leq_\Sigma D+E$ is called a \textit{low triple} if, for all $D',E' \in \mN\Delta$ such that
$D' \leq_\Sigma D$, $E' \leq_\Sigma E$ and $F \leq_\Sigma D' + E'$, it holds $D' = D$ and $E' = E$. The triple $(D,E,F)$ is called a \emph{fundamental triple} if $D,E \in \grD$.

To determine the low triples is useful the notion of covering
difference.  Let $E, F \in \mN \grD$ with $E <_\grS F$ and
suppose that $E$ is maximal in $\mN \grD$ with this property:
then we say that $F$ covers $E$ and we call $F - E$ a
\emph{covering difference} in $\mN \grD$.

For all $E = \sum_{D \in \grD} k_D D \in \mZ\grD$, define its
\emph{positive part} $E^+= \sum_{k_D > 0} k_D D $, its \emph{negative
  part} $E^-= E^+-E$ and its \emph{height} $\height(E) = \sum_{D \in
  \grD} k_D$.  Notice that $\grg \in \mN \grS$ is a covering
difference in $\mN \grD$ if and only $\grg^+$ covers $\grg^-$.

As noticed in \cite[Section~2.1, Remark]{BGM}, the covering differences in $\mN\grD$ are finitely many, therefore there is always a bound for the height of the positive part of a covering difference. In all the examples we know (included those we will deal with in the present paper) this bound can be taken to be 2. 

Let $(D,E,F)$ be a low triple and suppose that $m_{D,E}$ is surjective, then it is a straightforward consequence of the definition that $s^{D+E-F} V_F \subset V_D V_E$. On the other hand we have the following.

\begin{lemma}[{\cite[Lemma 2.3]{BGM}}]\label{lem:riduzionetriple}
Let $X$ be a wonderful variety and let $n$ be such that $\height(\grg^+) \leq n$
for every covering difference $\grg$. If $s^{D+E-F} V_F \subset V_D V_E$
for all low triples $(D,E,F)$ with $\height(D+E)\leq n$, then the multiplication maps $m_{D,E}$ are surjective for all $D,E \in \mN\grD$.
\end{lemma}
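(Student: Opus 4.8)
The plan is to prove that $m_{D,E}$ is surjective by checking that each isotypic summand of the target
\[
\grG(X,\calL_{D+E})=\bigoplus_{F\in\mN\grD\,:\,F\leq_\grS D+E}s^{D+E-F}V_F
\]
lies in the image. Two standing facts will be used throughout. First, since the divisor of a product of sections is the sum of the divisors, $s^\grs s^{\grs'}=s^{\grs+\grs'}$ and $s_Ds_E=s_{D+E}$ up to nonzero scalars; in particular the Cartan product already gives $V_{D+E}\subset V_DV_E$, so the summand with $F=D+E$ is free, for any height. Second, because multiplication of sections is $K$-equivariant, $V_DV_E$ is a $K$-submodule of $\grG(X,\calL_{D+E})$, and $s^{D+E-F}V_F$ is the irreducible $K$-module generated by its highest weight vector $s^{D+E-F}s_F$; hence it suffices to decide, for each $F$, whether $s^{D+E-F}s_F$ lies in $V_DV_E$. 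Granting the containment $s^{D+E-F}V_F\subset V_DV_E$ for all low triples $(D,E,F)$, the whole statement will follow, so the problem is to establish this containment for low triples.

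First I would reduce the general surjectivity statement to low triples. Fix $F\leq_\grS D+E$. The set $\{G\in\mN\grD:G\leq_\grS D+E\}$ is finite, because $\grG(X,\calL_{D+E})$ is finite dimensional, so $\leq_\grS$ is well founded on it; starting from $(D,E)$ and repeatedly replacing it by a strictly $\leq_\grS$-smaller pair $(D',E')$ with $D'\leq_\grS D$, $E'\leq_\grS E$ and $F\leq_\grS D'+E'$, the descent terminates at a low triple $(D'',E'',F)$. Writing $(D+E)-(D''+E'')=(D-D'')+(E-E'')\in\mN\grS$ and factoring $s^{D+E-F}=s^{D-D''}s^{E-E''}s^{(D''+E'')-F}$, the low triple containment $s^{(D''+E'')-F}V_F\subset V_{D''}V_{E''}$ yields
\[
s^{D+E-F}V_F\subset\bigl(s^{D-D''}V_{D''}\bigr)\bigl(s^{E-E''}V_{E''}\bigr)\subset\grG(X,\calL_D)\,\grG(X,\calL_E),
\]
since $s^{D-D''}V_{D''}$ and $s^{E-E''}V_{E''}$ are summands of $\grG(X,\calL_D)$ and $\grG(X,\calL_E)$. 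Summing over $F$ gives surjectivity of $m_{D,E}$.

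It then remains to prove $s^{D+E-F}V_F\subset V_DV_E$ for every low triple, and I would do this by induction on $\height(D+E)$. If $\height(D+E)\leq n$ the containment is the hypothesis, and if $\grg:=D+E-F=0$ it is the Cartan product noted above, so assume $\height(D+E)>n$ and $\grg\neq0$. I would pick a covering difference $\delta$ realising the top step of a saturated $\leq_\grS$-chain from $F$ to $D+E$, so that $G:=(D+E)-\delta\in\mN\grD$ satisfies $F\leq_\grS G$ and, by the hypothesis on $n$, $\height(\delta^+)\leq n$. A key structural observation is that $\delta^+$ must \emph{straddle} the two factors: were $\delta^+\leq D$ coordinatewise in $\grD$, then $D-\delta\in\mN\grD$ with $D-\delta\leq_\grS D$ and $(D-\delta)+E=G\geq_\grS F$, contradicting lowness of $(D,E,F)$; so $\delta^+=D_0+E_0$ with $0\neq D_0\leq D$ and $0\neq E_0\leq E$ coordinatewise. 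The step then aims to recombine the highest weight vector attached to $G$ — which is governed by a low triple of height $\height(G)<\height(D+E)$ and is available by induction — with the weight-zero section $s^\delta$, distributing $\delta^+$ between the two factors according to the straddling $D_0,E_0$, so as to land $s^{D+E-F}s_F$ inside the \emph{irreducible} product $V_DV_E$. The associativity relation, namely that surjectivity of $m_{A,B}$ and of $m_{A+B,C}$ forces surjectivity of $m_{A,B+C}$ (the surjective composite $\grG(X,\calL_A)\otimes\grG(X,\calL_B)\otimes\grG(X,\calL_C)\to\grG(X,\calL_{A+B+C})$ factoring through $m_{A,B+C}$), is the natural bookkeeping device organising this recombination.

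The main obstacle is exactly this inductive step. Its difficulty is built into the definition: a low triple cannot be simplified within $\leq_\grS$, so the height of $D+E$ can only be lowered coordinatewise, by peeling off the positive part of a covering difference, and by the straddling observation this peeling unavoidably involves both factors at once. Two things must then be controlled simultaneously, and this is the delicate representation-theoretic heart of the argument: one must verify that the redistribution of $\delta^+$ produces honest sections whose product is the required highest weight vector, and — the genuinely subtle point — that this product lands in the top irreducible piece $V_DV_E$ rather than merely in the larger image $\grG(X,\calL_D)\,\grG(X,\calL_E)$. The bound $\height(\delta^+)\leq n$ is precisely what keeps the increments removed at each step uniformly small, so that every covering difference that can occur at the top of an interval has already been accounted for in the base range $\height\leq n$ or reduces to a strictly smaller low triple, closing the induction.
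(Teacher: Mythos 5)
Your opening reductions are correct and match the standard framework (note that this paper does not prove the lemma at all: it imports it from \cite[Lemma 2.3]{BGM}, so the benchmark is the proof there). Surjectivity of $m_{D,E}$ is indeed equivalent to $s^{D+E-F}V_F$ lying in the image for every $F \leq_\grS D+E$; descending to a low triple $(D'',E'',F)$ and factoring $s^{D+E-F}=s^{D-D''}s^{E-E''}s^{D''+E''-F}$ reduces everything to the containment for low triples; the Cartan product handles $F=D+E$; and your straddling observation for a covering difference over a low triple is right. But from there on you do not give a proof: the inductive step for low triples with $\height(D+E)>n$ --- which is the \emph{entire} content of the lemma, since the hypothesis only covers height $\leq n$ --- is described as an ``aim'' and explicitly flagged by you as ``the main obstacle'' and ``the delicate representation-theoretic heart''. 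A proof that defers its only nontrivial step is a gap, not a proof; and such triples genuinely exist (e.g.\ enlarging a low triple by a summand with $F''=D''+E''$ produces low triples of arbitrarily large height), so the case cannot be waved away.

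Moreover, the one mechanism you commit to is broken in two places. First, your induction on $\height(D+E)$ does not terminate: you assert that $G=D+E-\delta$ is governed by a low triple of height $\height(G)<\height(D+E)$, but $\height(G)=\height(D+E)-\height(\delta)$ and covering differences can have height $\leq 0$ --- in this very paper $\grs_5=-D_5+D_6$ and $\grs_2+\grs_4+\grs_5=-D_1+D_2$ are covering differences of height $0$ (Lemma~\ref{lemma:coveringXXX}), and spherical roots such as $\gra'_{s-1}=-D_{2s-3}+D_{2s-2}+D_{2s-1}-D_{2s}-D_{2s+1}$ in case $\mathsf{ab^y}(s,s)$ even have height $-1$, so $\height$ is not monotone along $\leq_\grS$ and your descent-to-low step can push the height back above $n$. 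Second, the proposed ``recombination'' of $s^\delta$ across the two tensor factors is impossible as stated: for any splitting $\delta^+=D_0+E_0$, $\delta^-=P+Q$, the elements $D_0-P$ and $E_0-Q$ do not lie in $\mZ\grS$, so $s^\delta$ admits no factorization as a product of sections attached to the two sides, and $s^\delta\,\grG(X,\calL_A)\,\grG(X,\calL_B)\not\subset\grG(X,\calL_D)\,\grG(X,\calL_E)$ in general. The missing idea is a different splitting: for \emph{any} triple with $\grg=D+E-F$ one has the coordinatewise inequalities $\grg^+\leq D+E$ and $\grg^-\leq F$, so choosing $D=D'+D''$, $E=E'+E''$ with $D'+E'=\grg^+$ forces exactly $F=\grg^-+(D''+E'')$; the triple thus splits into a core triple $(D',E',\grg^-)$ with sum $\grg^+$ --- of height $\leq n$ when $\grg$ is a covering difference --- and a pure Cartan factor $V_{D''+E''}\subset V_{D''}V_{E''}$, and the induction is organized so that $s^\grg$ is only ever multiplied against $V_{\grg^-}$, where the low-triple hypothesis applies, never distributed between the two sides. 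Note also that your ``genuinely subtle point'' is partly a red herring: at the inductive stage only membership in $\grG(X,\calL_D)\,\grG(X,\calL_E)$ is needed, the strong containment $s^{D+E-F}V_F\subset V_DV_E$ entering only as the given hypothesis at the base.
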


To verify that $s^{D+E-F} V_F \subset V_D V_E$ we will make use
of the following.

\begin{lemma}[{\cite[Lemma~19]{CLM}}] \label{lemma: supporto moltiplicazione}
Let $D,E,F \in \mN\grD$ be such that $D \leq_\grS E+F$. Then
$s^{E+F-D} V_D \subset V_E V_F$ if and only if the projection of $h_E
\otimes h_F\in V(\gro^*_E) \otimes V(\gro^*_F)$ onto the isotypic
component of highest weight $\gro^*_D$ is non-zero.
\end{lemma}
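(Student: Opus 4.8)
The plan is to reduce the statement, for every orbit $\calO$, to a surjectivity problem on a short list of \emph{building-block} wonderful varieties, and then to settle those blocks by the low-triple machinery of Lemmas~\ref{lem:riduzionetriple} and~\ref{lemma: supporto moltiplicazione}. Concretely, I would first exploit the structural analysis of Section~\ref{sss:parabolic induction}: for every $\calO$ the associated wonderful variety $X$ is obtained by parabolic induction from the $L$-variety whose spherical system is the localization of $(S^\mathrm p_X,\Sigma_X,\mathrm A_X)$ in $S_L=(\supp\Sigma)\cup S^\mathrm p$. By Lemma~\ref{lem: projnorm parabolic induction}, the surjectivity of all multiplication maps on $X$ is \emph{equivalent} to their surjectivity on this induced variety; and since the trivial factor $L_2$ (with $S_{L_2}=S^\mathrm p\smallsetminus\supp\Sigma$) acts trivially and contributes nothing to colors, spherical roots or sections, I can discard it and work with the localization $Y$ in $\supp\Sigma$ alone. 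Thus the theorem reduces to proving surjectivity of multiplication for each of the localizations computed in Sections~\ref{sss:trivial cases}--\ref{sss:typeD}, namely: the trivial (rank $0$) cases, the wonderful symmetric cases, the reductive cases $42$ and $46$ of \cite{BP15}, and the tail and collapsed-tail systems of types $\mathsf B$, $\mathsf C$, $\mathsf D$.

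Next I would dispatch the two easy families. In the trivial cases $X=K/Q$ is a flag variety, every globally generated $\calL_D$ has an irreducible space of sections $V_D$, and $m_{D,E}$ is the classical surjection onto the Cartan component $V_{D+E}$. For the symmetric localizations (cases $2,5,6,9,15$ of \cite{BP15}) the surjectivity of multiplication is the known projective normality of wonderful symmetric varieties, which I would invoke from \cite{CM_projective-normality}. It is precisely here that the exceptional orbits enter: the localization of \ref{sss:CCabx} and \ref{sss:CCbax} is the reductive case $46$ ($p=5$), for which surjectivity genuinely fails (Example~\ref{example:counterexample}), so these orbits are excluded from the statement; consequently the reductive case $42$ is the only reductive block for which surjectivity still has to be established.

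The heart of the argument is the remaining non-symmetric blocks: the reductive case $42$ and the systems $\mathsf{a^y}(s,s)+\mathsf b'(t)$, $\mathsf{a^y}(2,2)+\mathsf c(t)$, $\mathsf{a^y}(s,s)+\mathsf d(t)$, $\mathsf{ab^y}(s,s)$ and $\mathsf{ad^y}(s,s+1)$. For each of these the full Cartan pairing is given explicitly in Section~\ref{sss:quotients}, so I would first read off the covering differences in $\mN\grD$ directly from the relations $\grs\in\mN\grS$ written in the color basis, and check that each has positive part of height at most $2$. Lemma~\ref{lem:riduzionetriple} then reduces surjectivity to the finitely many fundamental low triples $(D,E,F)$ with $D,E\in\grD$ and $\height(D+E)\leq 2$, and for each such triple I would verify $s^{D+E-F}V_F\subset V_D V_E$ by the projection criterion of Lemma~\ref{lemma: supporto moltiplicazione}: compute the weights $\gro_D,\gro_E,\gro_F$ from the combinatorial rule recalled above, and show that the projection of $h_D\otimes h_E$ onto the $\gro_F^*$-isotypic component of $V(\gro_D^*)\otimes V(\gro_E^*)$ is non-zero, the vectors $h_D,h_E$ being pinned down by the characters $\xi_D,\xi_E$.

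\textbf{Main obstacle.} The tail systems form infinite families parametrized by $s$ and $t$, so the delicate point is to make the low-triple analysis \emph{uniform} in these parameters instead of an unbounded case list. I expect the covering differences and the fundamental low triples to depend on $s,t$ only through a bounded combinatorial pattern concentrated near the junction between the $\mathsf A_s$-part and the tail, so that the genuinely new projection computations take place in a fixed small rank; the subvariety reduction of Lemma~\ref{lem: projnorm localizzazioni} (passing to subsets of $\Sigma$) should let me trim each family down to such base cases, while the quotient reduction of Lemma~\ref{lem: projnorm quotients} should let me recognise the collapsed-tail comodel systems (in particular $\mathsf{ad^y}(s,s+1)$, the comodel variety of cotype $\mathsf D_{2(s+1)}$) as instances already controlled in \cite{BGM}. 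Carrying out the non-vanishing of the projections in Lemma~\ref{lemma: supporto moltiplicazione} for the surviving base triples — essentially deciding when the Cartan-type component persists in the relevant tensor products of classical-group modules — is the step I expect to be the real work.
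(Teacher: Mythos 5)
Your proposal does not prove the statement at hand: it is an outline of the proof of Theorem~\ref{teo: projnorm} (the surjectivity of all multiplication maps, via parabolic induction, localization, and the low-triple machinery), whereas the statement you were asked to prove is Lemma~\ref{lemma: supporto moltiplicazione} itself, the local criterion asserting that for $D,E,F \in \mN\grD$ with $D \leq_\grS E+F$ one has $s^{E+F-D}V_D \subset V_E V_F$ \emph{if and only if} the projection of $h_E \otimes h_F \in V(\gro_E^*)\otimes V(\gro_F^*)$ onto the isotypic component of highest weight $\gro_D^*$ is non-zero. Worse, your argument explicitly invokes this very lemma as a tool (``verify $s^{D+E-F}V_F \subset V_D V_E$ by the projection criterion of Lemma~\ref{lemma: supporto moltiplicazione}''), so as a proof of the statement it is circular. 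For the record, the paper does not reprove this lemma either: it imports it verbatim from \cite[Lemma~19]{CLM}, so the correct comparison point is the proof given there.

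A genuine proof would have to run along the following lines, none of which appear in your text. Since $X$ is spherical, $\grG(X,\calL_{E+F})$ is a multiplicity-free $K$-module, and $s^{E+F-D}V_D$ is exactly its isotypic component of highest weight $\gro_D$; hence the inclusion $s^{E+F-D}V_D \subset V_E V_F$ is equivalent to saying that this component lies in the image of the $K$-equivariant multiplication $m_{E,F}\colon V_E \otimes V_F \to \grG(X,\calL_{E+F})$, i.e.\ that the composition of $m_{E,F}$ with the projection onto that component is non-zero. One then dualizes and evaluates at the base point $x_0$ of the open orbit: the line $\phi_E(x_0)\otimes\phi_F(x_0) = \mC\, h_E \otimes h_F$ is the $H$-eigenline of character $\xi_E\xi_F = \xi_{E+F}$ in $V(\gro_E^*)\otimes V(\gro_F^*)$, and the pairing of $V_E V_F$ against it detects precisely which irreducible summands of $\grG(X,\calL_{E+F})$ survive in the image; by multiplicity-freeness the $\gro_D$-summand survives if and only if $h_E\otimes h_F$ has non-zero projection onto the $\gro_D^*$-isotypic component. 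Both implications of the ``if and only if'' require this evaluation argument, and in particular the characterization of $h_E$ by the condition $g\cdot h_E = \xi_E(g)h_E$ for $g \in H$; your proposal mentions these vectors only to feed them into the criterion, not to establish it.
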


\begin{example} \label{example:counterexample}
Let $\mathfrak g = \mathfrak{sp}(2p+2q)$ and $\mathfrak k= \mathfrak{sp}(2p) + \mathfrak{sp}(2q)$. If $p\geq4$ consider the spherical nilpotent $K$-orbit $\mathcal O$ defined by the signed partition $(+3^4,+1^{2p-8})$ (or similarly the one defined by $(-3^4,-1^{2q-8})$ if $q\geq4$). Let $X$ be the corresponding wonderful $K$-variety, then there are elements $D,E \in \mN\grD$ such that $m_{D,E}$ is not surjective.

Indeed, the spherical system of $X$ is the following:
\[\begin{picture}(17700,3000)(-300,-1500)
\multiput(0,0)(1800,0){2}{\usebox{\edge}}
\multiput(0,0)(1800,0){3}{\usebox{\aone}}
\put(3600,0){\usebox{\edge}}
\put(5400,0){\usebox{\wcircle}}
\put(5400,0){\usebox{\edge}}
\put(7200,0){\usebox{\susp}}
\put(10800,0){\usebox{\leftbiedge}}
\put(15300,0){\usebox{\leftbiedge}}
\multiput(15300,0)(1800,0){2}{\usebox{\aone}}
\multiput(0,-900)(15300,0){2}{\line(0,-1){600}}
\put(0,-1500){\line(1,0){15300}}
\multiput(1800,-900)(15300,0){2}{\line(0,-1){300}}
\put(1800,-1200){\line(1,0){13400}}
\put(15400,-1200){\line(1,0){1700}}
\multiput(0,900)(3600,0){2}{\line(0,1){600}}
\put(17100,900){\line(0,1){600}}
\put(0,1500){\line(1,0){17100}}
\put(3900,-600){\line(1,0){10050}}
\put(13950,-600){\line(0,1){1200}}
\put(13950,600){\line(1,0){1050}}
\put(0,600){\usebox{\toe}}
\put(3600,600){\usebox{\tow}}
\put(15300,600){\usebox{\toe}}
\put(17100,600){\usebox{\tow}}
\end{picture}
\]

%
%
%

Label the spherical roots and the colors of $X$ as follows:
$$\grs_1 = \gra_2, \qquad \grs_2 = \gra'_2, \qquad \grs_3 = \gra_1, \qquad \grs_4 = \gra'_1, \qquad \grs_5 = \gra_3$$
$$	D_1 = D_{\gra_2}^+, \quad D_2 = D_{\gra_2}^-, \quad D_3 = D_{\gra_1}^+, \quad D_4 = D_{\gra_1}^-, \quad D_5 = D_{\gra_3}^-, \quad D_6 = D_{\gra_4}$$
Then the Cartan pairing of $X$ is expressed as follows:
\begin{align*}
& \grs_1 = D_1 + D_2 -D_3\\
& \grs_2 = -D_1 +D_2 +D_3 -D_4 -D_5 \\
& \grs_3 = -D_2 +D_3 +D_4 -D_5 \\
& \grs_4 = -D_3 +D_4 +D_5\\
& \grs_5 = -D_2 +D_3 -D_4 +D_5 - D_6
\end{align*}

Consider the triple $(D_3, D_3, D_1+D_2+D_6)$: then $2D_3 -D_1-D_2 - D_6= \grs_2+ \grs_3 + \grs_4 +\grs_5$, and the triple is easily shown to be low. On the other hand if $V_{D_1+D_2+D_6} \subset V_{D_3}^2$, then it would be $V(2\gro_2+\gro_4+\gro_2') \subset V(\gro_1+\gro_3+\gro_2')^{\otimes 2}$, which is not the case. Therefore $m_{D_3, D_3}$ is not surjective.
\end{example}

\subsection{Basic cases}
We show in this section that in order to prove Theorem \ref{teo:
  projnorm}, we are reduced to the study of three special families of
wonderful varieties.

Following Section \ref{sss:parabolic induction}, 
by Lemma \ref{lem: projnorm parabolic induction}, 
the surjectivity of the multiplications on
$X$ is reduced to that one on a wonderful $L_1$-variety $Y$, where
$L_1$ is the Levi subgroup of $K$ corresponding to the set of simple
roots in $\supp \grS$. More precisely, $Y$ is the localization of $X$ at
the subset $\supp \grS\subset S$, and the wonderful varieties arising in this
way are described in Sections \ref{sss:trivial cases},
\ref{sss:symmetric cases}, \ref{sss:reductive} (only the cases \ref{sss:CCaac} and \ref{sss:CCaac2}), \ref{sss:typeB},
\ref{sss:typeC}, \ref{sss:typeD}.

Analyzing all the possible cases, we now show that to prove the
surjectivity of the multiplications for $Y$ we are reduced to the
following three families:

\begin{itemize}
\item[]$\mathsf{a}^\mathsf{y}(2, 2)+\mathsf{c}(t)$, $t\geq2$,
\[
\begin{picture}(16600,2850)(-300,-1350)
\multiput(0,0)(5500,0){2}{
\multiput(0,0)(1800,0){1}{\usebox{\edge}}
\multiput(0,0)(1800,0){2}{\usebox{\aone}}
}
\put(7300,0){\usebox{\edge}}
\put(9100,0){\usebox{\gcircle}}
\put(9100,0){\usebox{\edge}}
\put(10900,0){\usebox{\susp}}
\put(14500,0){\usebox{\leftbiedge}}

\multiput(0,-900)(7300,0){2}{\line(0,-1){450}}
\put(0,-1350){\line(1,0){7300}}
\multiput(0,900)(5500,0){2}{\line(0,1){600}}
\put(0,1500){\line(1,0){5500}}
\multiput(1800,900)(5500,0){2}{\line(0,1){300}}
\put(1800,1200){\line(1,0){3600}}
\put(5600,1200){\line(1,0){1700}}
\put(1800,600){\usebox{\tow}}
\put(5500,600){\usebox{\toe}}
\end{picture}\]

\item[]$\mathsf{a^y}(s,s) + \mathsf{b}'(t)$, $s,t\geq1$,
\[\begin{picture}(23800,3600)(-300,-1800)
\multiput(0,0)(9100,0){2}{
\put(0,0){\usebox{\edge}}
\multiput(0,0)(1800,0){2}{\usebox{\aone}}
\put(5400,0){\usebox{\aone}}
}
\put(1800,0){\usebox{\susp}}
\multiput(10900,0)(7200,0){2}{\usebox{\susp}}
\multiput(14500,0)(1800,0){2}{\usebox{\edge}}
\put(21700,0){\usebox{\rightbiedge}}
\put(16300,0){\usebox{\gcircletwo}}
\multiput(0,900)(9100,0){2}{\line(0,1){900}}
\put(0,1800){\line(1,0){9100}}
\multiput(1800,900)(9100,0){2}{\line(0,1){600}}
\put(1800,1500){\line(1,0){7200}}
\put(9200,1500){\line(1,0){1700}}
\multiput(5400,900)(9100,0){2}{\line(0,1){300}}
\put(5400,1200){\line(1,0){3600}}
\put(9200,1200){\line(1,0){1600}}
\put(11000,1200){\line(1,0){3500}}
\multiput(1800,-900)(7300,0){2}{\line(0,-1){900}}
\put(1800,-1800){\line(1,0){7300}}
\multiput(3600,-1500)(0,300){3}{\line(0,1){150}}
\put(3600,-1500){\line(1,0){5400}}
\put(9200,-1500){\line(1,0){1700}}
\put(10900,-1500){\line(0,1){600}}
\put(5400,-900){\line(0,-1){300}}
\put(5400,-1200){\line(1,0){3600}}
\put(9200,-1200){\line(1,0){1600}}
\put(11000,-1200){\line(1,0){1700}}
\multiput(12700,-1200)(0,300){2}{\line(0,1){150}}
\multiput(0,600)(1800,0){2}{\usebox{\toe}}
\multiput(10800,600)(3600,0){2}{\usebox{\tow}}
\end{picture}\]

\item[]$\mathsf{ab^y}(s,s)$, $s\geq2$.
\[\begin{picture}(17700,4200)(-300,-2100)
\put(0,0){\usebox{\edge}}
\put(1800,0){\usebox{\susp}}
\put(5400,0){\usebox{\edge}}
\put(9900,0){\usebox{\edge}}
\put(11700,0){\usebox{\susp}}
\put(15300,0){\usebox{\rightbiedge}}
\multiput(0,0)(9900,0){2}{\multiput(0,0)(5400,0){2}{\multiput(0,0)(1800,0){2}{\usebox{\aone}}}}
\multiput(0,900)(9900,0){2}{\line(0,1){1200}}
\put(0,2100){\line(1,0){9900}}
\multiput(1800,900)(9900,0){2}{\line(0,1){900}}
\put(1800,1800){\line(1,0){8000}}
\put(10000,1800){\line(1,0){1700}}
\multiput(5400,900)(9900,0){2}{\line(0,1){600}}
\put(5400,1500){\line(1,0){4400}}
\put(10000,1500){\line(1,0){1600}}
\put(11800,1500){\line(1,0){3500}}
\multiput(7200,900)(9900,0){2}{\line(0,1){300}}
\put(7200,1200){\line(1,0){2600}}
\put(10000,1200){\line(1,0){1600}}
\put(11800,1200){\line(1,0){3400}}
\put(15400,1200){\line(1,0){1700}}
\multiput(1800,-900)(8100,0){2}{\line(0,-1){1200}}
\put(1800,-2100){\line(1,0){8100}}
\multiput(3600,-1800)(0,300){3}{\line(0,1){150}}
\put(3600,-1800){\line(1,0){6200}}
\put(10000,-1800){\line(1,0){1700}}
\put(11700,-1800){\line(0,1){900}}
\put(5400,-900){\line(0,-1){600}}
\put(5400,-1500){\line(1,0){4400}}
\put(10000,-1500){\line(1,0){1600}}
\put(11800,-1500){\line(1,0){1700}}
\multiput(13500,-1500)(0,300){2}{\line(0,1){150}}
\multiput(7200,-900)(8100,0){2}{\line(0,-1){300}}
\put(7200,-1200){\line(1,0){2600}}
\put(10000,-1200){\line(1,0){1600}}
\put(11800,-1200){\line(1,0){1600}}
\put(13600,-1200){\line(1,0){1700}}
\multiput(0,600)(1800,0){2}{\usebox{\toe}}
\put(5400,600){\usebox{\toe}}
\put(11700,600){\usebox{\tow}}
\multiput(15300,600)(1800,0){2}{\usebox{\tow}}
\end{picture}
\]
\end{itemize}

In the cases of Section \ref{sss:trivial cases} the wonderful variety
$X$ is a flag variety, therefore the surjectivity of the
multiplication of globally generated line bundles holds trivially
since the space of sections of a globally generated line bundle on a
flag variety is an irreducible $K$-module.

In the cases of Section \ref{sss:symmetric cases} the wonderful
variety $Y$ is the wonderful compactification of an adjoint symmetric
variety, and the surjectivity of the multiplication of globally
generated line bundles holds thanks to \cite{CM_projective-normality}.

In the cases \ref{sss:CCaac} and \ref{sss:CCaac2} of Section~\ref{sss:reductive} (up to switching the two factors
of $K$) the surjectivity of the multiplications of $Y$ is reduced to
that one of the wonderful variety $Z$ with spherical system
$\mathsf{a}^\mathsf{y}(2,2)+\mathsf c(t)$ where $t \geq 2$. 
More precisely, start with
$Z$ and consider the set of colors $\{D_{\gra_1}^+, D_{\gra_2}^+\}$,
it is distinguished and the corresponding quotient is a
parabolic induction of $Y$. Therefore the surjectivity of the
multiplications of $Y$ follows from that of $Z$ thanks to 
Lemma~\ref{lem: projnorm quotients} and Lemma~\ref{lem: projnorm parabolic induction}.

In the cases of Section \ref{sss:typeB} (a) $Y$ is the wonderful
variety with spherical system $\mathsf a^\mathsf y(s,s)+\mathsf b'(t)$ 
where $s\geq0$ and $t \geq1$, but if $s=0$ it is just an adjoint symmetric variety. 
In the cases of Section~\ref{sss:typeB} (b) $Y$ is the
wonderful variety with spherical system $\mathsf{ab}^\mathsf y(s,s)$ where $s\geq2$.

In the cases of Section \ref{sss:typeC} (a) $Y$ is the wonderful
variety with spherical system $\mathsf a^\mathsf y(2,2)+\mathsf c(t)$ where $t \geq
2$, whereas in the cases of Section~\ref{sss:typeC} (b) $Y$ is the
wonderful variety with spherical system $\mathsf{ab}^\mathsf y(2,2)$.

In the cases of Section \ref{sss:typeD} (a) $Y$ is the wonderful
variety with spherical system $\mathsf a^\mathsf y(s,s)+\mathsf d(t)$ 
where $s\geq0$ and $t \geq2$. The surjectivity of the multiplications in this case can be
reduced to that of a comodel wonderful variety, which is known by
\cite[Theorem~5.2]{BGM}. Let indeed $Z$ be the comodel wonderful
variety of cotype $\sfD_{2(s+t)}$, this is the wonderful variety with
the following spherical system for a group of semisimple type $\sfA_{s+t-1} \times
\sfD_{s+t}$. 
\[\begin{picture}(17550,4500)(-300,-2100)
\put(0,0){\usebox{\edge}}
\put(1800,0){\usebox{\susp}}
\put(5400,0){\usebox{\edge}}
\put(600,0){
\put(9300,0){\usebox{\edge}}
\put(11100,0){\usebox{\susp}}
\put(14700,0){\usebox{\bifurc}}
}
\multiput(0,0)(1800,0){2}{\usebox{\aone}}
\multiput(5400,0)(1800,0){2}{\usebox{\aone}}
\put(600,0){
\multiput(9300,0)(1800,0){2}{\usebox{\aone}}
\put(14700,0){\usebox{\aone}}
\multiput(15900,-1200)(0,2400){2}{\usebox{\aone}}
}
\put(7200,-2100){\line(0,1){1200}}
\put(7200,-2100){\line(1,0){8100}}
\put(15300,-2100){\line(0,1){1200}}
\put(5400,-900){\line(0,-1){900}}
\put(5400,-1800){\line(1,0){1700}}
\put(7300,-1800){\line(1,0){6200}}
\multiput(13500,-1800)(0,300){3}{\line(0,1){150}}
\multiput(3600,-1500)(0,300){3}{\line(0,1){150}}
\put(3600,-1500){\line(1,0){1700}}
\put(5500,-1500){\line(1,0){1600}}
\put(7300,-1500){\line(1,0){4400}}
\put(11700,-1500){\line(0,1){600}}
\multiput(1800,-900)(8100,0){2}{\line(0,-1){300}}
\put(1800,-1200){\line(1,0){1700}}
\multiput(3700,-1200)(1800,0){2}{\line(1,0){1600}}
\put(7300,-1200){\line(1,0){2600}}
\put(7200,2400){\line(0,-1){1500}}
\put(7200,2400){\line(1,0){10050}}
\put(17250,2400){\line(0,-1){3000}}
\multiput(17250,-600)(0,2400){2}{\line(-1,0){450}}
\multiput(5400,2100)(9900,0){2}{\line(0,-1){1200}}
\put(5400,2100){\line(1,0){1700}}
\put(7300,2100){\line(1,0){8000}}
\multiput(1800,1500)(9900,0){2}{\line(0,-1){600}}
\put(1800,1500){\line(1,0){3500}}
\put(5500,1500){\line(1,0){1600}}
\put(7300,1500){\line(1,0){4400}}
\multiput(0,1200)(9900,0){2}{\line(0,-1){300}}
\put(0,1200){\line(1,0){1700}}
\put(1900,1200){\line(1,0){3400}}
\put(5500,1200){\line(1,0){1600}}
\put(7300,1200){\line(1,0){2600}}
\multiput(0,600)(1800,0){2}{\usebox{\toe}}
\put(5400,600){\usebox{\toe}}
\put(11700,600){\usebox{\tow}}
\put(15300,600){\usebox{\tow}}
\put(16500,-600){\usebox{\tonw}}
\put(16500,1800){\usebox{\tosw}}
\end{picture}\]
Consider the wonderful subvariety of $Z$ associated to
$\Sigma \smallsetminus \{\alpha_{s+1}, \ldots, \alpha_{s+t-1}\}$, then the set of
colors $\{D_{\alpha_{s+1}'}^-,$ $D_{\alpha_{s+2}'}^\pm, \ldots ,
D_{\alpha_{s+t}'}^\pm\}$ is distinguished, and the corresponding
quotient is a parabolic induction of $Y$.  Therefore the surjectivity
of the multiplications of $Y$ follows from that of $Z$ thanks to 
Lemma~\ref{lem: projnorm quotients} and Lemma~\ref{lem: projnorm parabolic induction}.

Finally, in the cases of Section \ref{sss:typeD} (b) $Y$ is the
comodel wonderful variety of cotype $\sfD_{2(s+1)}$, and the
surjectivity of the multiplications for this variety follows by
\cite[Theorem~5.2]{BGM}.


\subsection{Projective normality of $\mathsf{a}^\mathsf{y}(2, 2)+\mathsf{c}(t)$}\label{ss:ay22ct} 
Consider the wonderful variety $X$ for a semisimple group $G$ of type $\sfA_2
\times \sfC_{t+1}$ with $t\geq 2$ defined by the following spherical system.
\[
\begin{picture}(16600,2850)(-300,-1350)
\multiput(0,0)(5500,0){2}{
\multiput(0,0)(1800,0){1}{\usebox{\edge}}
\multiput(0,0)(1800,0){2}{\usebox{\aone}}
}
\put(7300,0){\usebox{\edge}}
\put(9100,0){\usebox{\gcircle}}
\put(9100,0){\usebox{\edge}}
\put(10900,0){\usebox{\susp}}
\put(14500,0){\usebox{\leftbiedge}}

\multiput(0,-900)(7300,0){2}{\line(0,-1){450}}
\put(0,-1350){\line(1,0){7300}}
\multiput(0,900)(5500,0){2}{\line(0,1){600}}
\put(0,1500){\line(1,0){5500}}
\multiput(1800,900)(5500,0){2}{\line(0,1){300}}
\put(1800,1200){\line(1,0){3600}}
\put(5600,1200){\line(1,0){1700}}
\put(1800,600){\usebox{\tow}}
\put(5500,600){\usebox{\toe}}
\end{picture}\]

The spherical system associated to this Luna diagram is described in Section~\ref{sss:typeC}.
For convenience we number the five spherical roots in the following way:
$$ \sigma_1 = \alpha_2, \quad \sigma_2 = \alpha'_2, \quad \sigma_3 =
\alpha_1, \quad \sigma_4 = \alpha'_1, \quad \sigma_5 = \alpha'_2 +
\sum_{i=3}^t 2\alpha'_i + \alpha'_{t+1}.$$ 
There are six colors that we label in the following way:
\begin{align*} D_1 &= D_{\alpha_2}^-,& \quad D_2 &= D_{\alpha_2}^+,& \quad D_3 &=
D_{\alpha_1}^-, \\ D_4 &= D_{\alpha_1}^+,& \quad D_5 &=
D_{\alpha'_1}^-,& \quad D_6 &= D_{\alpha'_3}.
\end{align*} 
The weights of these colors are the following:
\begin{align*}
\omega_{D_1}&=\omega_2, &\quad 
\omega_{D_2}&=\omega_2+\omega_2', &\quad 
\omega_{D_3}&=\omega_1+\omega_2', \\ 
\omega_{D_4}&=\omega_1+\omega_1', &\quad 
\omega_{D_5}&=\omega_1', &\quad 
\omega_{D_6}&=\omega_3'.
\end{align*}

Notice that the $G$-stable divisor of $X$ corresponding to $\grs_5$ is a parabolic induction of a comodel wonderful variety of cotype $\sfA_5$ (see \cite[Section~5]{BGM}). Therefore we can restrict our study to the covering differences and the low triples of $X$ which contain $\grs_5$.

\begin{lemma}	\label{lemma:coveringXXX}
Let $\grg \in \mN\grS$ be a covering difference in $\mN\grD$ with
$\grs_5 \in \supp_\grS \grg$, then either $\grg = \grs_5 = -D_5 + D_6$
or $\grg = \grs_2 + \grs_4 + \grs_5 = -D_1 + D_2$.
Every other covering difference $\grg \in \mN \grS$ verifies $\height(\grg^+) = 2$.
\end{lemma}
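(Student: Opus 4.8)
The plan is to work entirely in the color lattice, using the expressions of the spherical roots in colors recorded in Section~\ref{sss:typeC}, namely (after the renumbering of Section~\ref{ss:ay22ct})
\[
\grs_1 = D_1+D_2-D_3,\quad \grs_2 = -D_1+D_2+D_3-D_4-D_6,\quad \grs_3 = -D_2+D_3+D_4-D_5,
\]
\[
\grs_4 = -D_3+D_4+D_5,\quad \grs_5 = -D_5+D_6 .
\]
Writing $\grg=\sum_i n_i\grs_i$ and letting $c_j$ be the coefficient of $D_j$ in $\grg$, I would record the relevant linear forms $c_1=n_1-n_2$, $c_3=-n_1+n_2+n_3-n_4$, $c_5=n_4-n_3-n_5$ and $c_6=n_5-n_2$. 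The basic tool is the \emph{peeling principle}: $\grg$ cannot be a covering difference as soon as one produces $\tau\in\mN\grS\senza\{0\}$ with $\grg-\tau\in\mN\grS\senza\{0\}$ and $\grg^-+\tau\in\mN\grD$, since then $\grg^-+\tau$ is an element of $\mN\grD$ lying strictly between $\grg^-$ and $\grg^+$ for $\leq_\grS$; equivalently one may subtract such a $\tau$ from $\grg^+$.

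First I would peel single spherical roots to constrain the $n_i$. Assuming $\grg$ is a covering difference with $\grs_5\in\supp_\grS\grg$ (that is $n_5\geq1$) and $\grg\neq\grs_5$, peeling $\grs_5$ from the top forces $c_6\leq0$ (otherwise $\grg^+-\grs_5\in\mN\grD$) and peeling it from the bottom forces $c_5\geq0$ (otherwise $\grg^-+\grs_5\in\mN\grD$); hence $n_2\geq n_5\geq1$ and $n_4\geq n_3+n_5\geq1$. As now $n_4\geq1$, peeling $\grs_4$ from the bottom forces $c_3\geq0$, i.e.\ $n_2+n_3\geq n_1+n_4$. Combining the last two inequalities gives $n_2\geq n_1+n_5$, so that $c_1=n_1-n_2\leq-1$; in particular $D_1$ occurs in $\grg^-$, and $n_2,n_4,n_5\geq1$.

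The decisive, and I expect only delicate, point is that single roots no longer suffice to finish: one checks on $\grg=\grs_2+\grs_3+2\grs_4+\grs_5$ (which equals $-D_1+2D_4$) that every single-root peel fails although $\grg$ is not a covering difference. The remedy is to peel the composite root $\tau=\grs_2+\grs_4+\grs_5$, which in colors is exactly $-D_1+D_2$. By the previous step $\tau\leq_\grS\grg$ and $D_1$ lies in $\grg^-$, whence $\grg^-+\tau=\grg^--D_1+D_2\in\mN\grD$; if $\grg\neq\tau$ this is an intermediate element, contradicting that $\grg$ is a covering difference. Therefore $\grg=\grs_2+\grs_4+\grs_5=-D_1+D_2$. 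Together with $\grg=\grs_5=-D_5+D_6$ this shows that these are the only covering differences containing $\grs_5$, both with $\height(\grg^+)=1$; that both are genuine covering differences is the immediate verification that no proper nonzero subsum can be added to their negative part within $\mN\grD$.

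Finally, for a covering difference with $\grs_5\notin\supp_\grS\grg$ I would invoke the reduction set up just before the lemma: such a $\grg$ is a covering difference of the $G$-stable divisor of $X$ attached to $\grs_5$, which is a parabolic induction of the comodel wonderful variety of cotype $\sfA_5$, and by the comodel analysis of \cite[Section~5]{BGM} its positive part has height $2$. (Alternatively, this last family can be handled by the very same peeling method localized to $\grs_1,\ldots,\grs_4$.) This yields the uniform bound $\height(\grg^+)\leq2$ needed to apply Lemma~\ref{lem:riduzionetriple}, while isolating the two height-one covering differences singled out in the statement.
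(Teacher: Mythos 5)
Your proof is correct and takes essentially the same route as the paper's: the same expansion of $\grg$ in the color basis, the same inequalities $a_5\leq a_2$, $a_3+a_5\leq a_4$, $a_1+a_4\leq a_2+a_3$ obtained by testing $\grs_5$ and $\grs_4$ against the covering property, the same decisive use of the composite element $\grs_2+\grs_4+\grs_5=-D_1+D_2$, and the same reduction of the case $\grs_5\notin\supp_\grS\grg$ to the comodel wonderful variety of cotype $\sfA_5$ via \cite[Proposition~3.2]{BGM}. The only difference is presentational: the paper runs the last step as a contradiction ($a_2\leq a_1$ against $a_2\geq a_1+a_5$) where you peel $-D_1+D_2$ directly, and your example $\grs_2+\grs_3+2\grs_4+\grs_5$ is a helpful but inessential motivation.
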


\begin{proof}
Denote $\gamma = \sum a_i \sigma_i$, then we have
\begin{align} \label{gamma}
\gamma = (a_1 - a_2)D_1 + (a_1+a_2-a_3) D_2 + (-a_1 + a_2 +a_3 -a_4) D_3 + \notag \\
+(-a_2 + a_3 + a_4) D_4 + (-a_3 + a_4 - a_5) D_5 + (-a_2 + a_5) D_6 
\end{align}
Suppose that $a_5 \neq 0$. If $D_5 \in \supp(\grg^-)$ then $\gamma^- +
\sigma_5 \in \mN\grD$, and if $D_6 \in \supp(\grg^+)$ then $\gamma^+ -
\sigma_5 \in \mN\grD$. Therefore if $\grg \neq \grs_5$ it must be $D_5
\not \in \supp(\grg^-)$ and $D_6 \not \in \supp(\grg^+)$, namely $a_3
+ a_5 \leq a_4$ and $a_5 \leq a_2$. It follows that $a_2 > 0$ and $a_4
> 0$, suppose that $\grs \neq \grs_2 + \grs_4 + \grs_5 = -D_1 +
D_2$. Then $a_1 + a_4 \leq a_2 + a_3$ since $\grg^- + \grs_4 \not \in
\mN \grD$, and $a_2 \leq a_1$ since $\grg^- + \grs_2 + \grs_4 + \grs_5
\not \in \mN\grD$. Therefore we get $a_1 + (a_4 - a_3) \leq a_2 \leq
a_1$, which is absurd since $a_4 - a_3 \geq a_5 > 0$.

As already noticed, the $G$-stable divisor of $X$ corresponding to $\grs_5$ is a parabolic induction of a comodel wonderful variety of cotype $\sfA_5$. Therefore the covering differences $\grg$ with $\grs_5 \not \in \supp_\grS \grg$ coincide with those studied in \cite[Proposition~3.2]{BGM}, and they all satisfy $\height(\grg^+) = 2$.
\end{proof}

\begin{lemma}\label{lem:tripleXX}
Let $(D,E,F)$ be a low fundamental triple, denote $\grg = D+E-F$ and
suppose that $\grs_5 \in \supp_\grS \grg$. Then we have the following
possibilities:
\begin{itemize}
	\item[-] $(D_2,D_3, D_1+ D_4 + D_5)$, $\grg = \grs_2 + \grs_5$;
	\item[-] $(D_3,D_3, D_1 +2 D_5)$, $\grg = \grs_2 + \grs_3 + \grs_5$;
	\item[-] $(D_2,D_2, D_4 +D_5)$, $\grg = \grs_1 + \grs_2 + \grs_5$;
	\item[-] $(D_2,D_3,2 D_5)$, $\grg = \grs_1 + \grs_2 + \grs_3 + \grs_5$;
	\item[-] $(D_3,D_4, D_1+D_5)$, $\grg = \grs_2 + \grs_3 + \grs_4 + \grs_5$;
	\item[-] $(D_4, D_4, D_1)$, $\grg = \grs_2 + \grs_3 + 2\grs_4 + \grs_5$.
\end{itemize}
\end{lemma}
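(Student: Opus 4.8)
The plan is to reduce lowness of a fundamental triple to a condition on which colors occur among $D,E$ and on the coefficients $a_i$ of $\grg=D+E-F=\sum_i a_i\grs_i$, exploiting that very few colors can be decreased in the order $\leq_\grS$. The starting point is a consequence of Lemma~\ref{lemma:coveringXXX}: the only covering differences whose positive part has height $1$ are $\grs_5=-D_5+D_6$ and $\grs_2+\grs_4+\grs_5=-D_1+D_2$. Indeed every covering difference with $\grs_5\notin\supp_\grS\grg$ satisfies $\height(\grg^+)=2$, so a height-$1$ positive part forces $\grs_5\in\supp_\grS\grg$ and hence one of these two. Consequently the only colors arising as the positive part of a height-$1$ covering difference are $D_6$ and $D_2$.

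First I would establish a reduction principle: $(D,E,F)$ is low if and only if there is no $D''$ covered by $D$ with $F\leq_\grS D''+E$ and no $E''$ covered by $E$ with $F\leq_\grS D+E''$. The only delicate implication is that any reduction can be realized in one covering step: if $(D',E')\neq(D,E)$ witnesses reducibility, say with $D'\neq D$, then $F\leq_\grS D'+E'\leq_\grS D'+E$, and any $D''$ covered by $D$ with $D''\geq_\grS D'$ then satisfies $F\leq_\grS D''+E$. Because $D$ and $E$ are single colors, there is an element strictly below $D$ in $\leq_\grS$ only when $D$ is the positive part of a height-$1$ covering difference, i.e.\ only when $D\in\{D_2,D_6\}$ (and likewise for $E$); moreover decreasing $D_6$ to $D_5$ requires $\grg-\grs_5\in\mN\grS$, that is $a_5\geq1$, while decreasing $D_2$ to $D_1$ requires $\grg-(\grs_2+\grs_4+\grs_5)\in\mN\grS$, that is $a_2,a_4,a_5\geq1$.

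Imposing the hypothesis $\grs_5\in\supp_\grS\grg$, i.e.\ $a_5\geq1$, these criteria become rigid. If $D_6$ occurred among $D,E$ it could always be decreased, so lowness forces $D_6\notin\{D,E\}$; the $D_6$-coordinate of $F=D+E-\grg$ then equals $a_2-a_5\geq0$, whence $a_2\geq a_5\geq1$. Given $a_2,a_5\geq1$, the color $D_2$ is decreasable exactly when $a_4\geq1$, so lowness forces $a_4=0$ whenever $D_2\in\{D,E\}$. Conversely, under the two conditions $D_6\notin\{D,E\}$ and $a_4=0$ when $D_2\in\{D,E\}$, no one-step reduction exists, so the triple is low; thus these conditions characterise lowness under our hypothesis.

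It then remains to enumerate. For each unordered pair of colors $(D,E)$ with $D,E\neq D_6$, let $n_j$ denote the multiplicity of $D_j$ in $D+E$; the requirement $F=D+E-\grg\in\mN\grD$ is the linear system $c_j\leq n_j$ for the color-coordinates $c_j$ of $\grg$ read off from \eqref{gamma}, to be solved together with $a_i\geq0$, $a_5\geq1$, and $a_4=0$ when $D_2\in\{D,E\}$. A short check shows that every pair meeting $D_1$ or $D_5$, together with the pair $(D_2,D_4)$, forces $a_2=0$ and is therefore incompatible with $a_5\geq1$ (since $a_5\leq a_2$), while the five pairs $(D_2,D_2)$, $(D_2,D_3)$, $(D_3,D_3)$, $(D_3,D_4)$, $(D_4,D_4)$ produce precisely the six triples of the statement, the pair $(D_2,D_3)$ yielding the two possibilities $\grg=\grs_2+\grs_5$ and $\grg=\grs_1+\grs_2+\grs_3+\grs_5$. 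I expect the reduction principle, together with the identification of $D_2$ and $D_6$ as the only decreasable colors, to be the conceptual crux; once these are in place, solving the small linear systems pair by pair is routine.
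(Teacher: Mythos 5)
Your proposal is correct and follows essentially the same route as the paper: both rest on Lemma~\ref{lemma:coveringXXX} to single out $D_2$ and $D_6$ as the only colors that can be decreased in the order $\leq_\grS$ (whence $D_6\notin\supp(\grg^+)$ and $0<a_5\leq a_2$ for a low triple), and then finish by a finite case analysis using the coordinate formula \eqref{gamma}. The only organisational difference is that you state the one-step-covering criterion for lowness explicitly and enumerate over the fifteen unordered pairs $(D,E)$, whereas the paper interleaves the lowness checks with a case division on $a_4$ and on $\supp(\grg^+)$; the resulting lists agree.
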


\begin{proof}
By Lemma \ref{lemma:coveringXXX}, $\grs_5 = -D_5 + D_6$ and $\grs_2 + \grs_4 + \grs_5 = -D_1 + D_2$ are the unique covering differences $\grg$ with $\height(\grg^+) = 1$. Therefore $D_1, D_3, D_4, D_5$ are minuscule in $\mN\grD$.

Let $(D,E,F)$ be a fundamental triple with $\supp(F) \cap \supp(D+E) = \vuoto$, denote $\grg = D+E-F = \sum a_i \grs_i$ and suppose $a_5 > 0$. Notice that, if $(D,E,F)$ is a low triple, then $D_6 \not \in \supp(\grg^+)$: suppose indeed $D = D_6$, then $D_5 <_\grS D$ and $F \leq_\grS D_5 +E$. Therefore, if $(D,E,F)$ is a low triple, then (\ref{gamma}) implies $0 < a_5 \leq a_2$.

Suppose $a_4 = 0$. Then for every covering difference $\grs \leq \grg$ it holds $\height(\grs^+) = 2$, therefore $(D,E,F)$ is necessarily a low triple.

To classify such fundamental triples, suppose $D_2 \not \in \supp(\grg^+)$. Then $c(D_2,\grg) \leq 0$, hence $a_1 + a_2 \leq a_3$ and we get $2 \leq 2a_2 \leq a_2 + a_3 - a_1$. Being $\height(\grg^+) = 2$, it follows then $D = E = D_3$. Equivalently, we have the equality $c(D_3,\grg) = -a_1 + a_2 + a_3 = 2$, and the inequalities $c(D_2,\grg) \leq 0$, $c(D_4,\grg) \leq 0$ imply $2a_1-a_3 +2 \leq a_3 \leq a_1-a_3 +2$. It follows $a_1 = 0$ and $a_2 = a_3 = 1$, and the inequality $0 < a_5 \leq a_2$ imply $a_5 = 1$. Therefore $\grg = \grs_2+\grs_3+\grs_5$ and $F = D_1 +2 D_5$.

Similarly, suppose $a_4 = 0$ and $D_3 \not \in \supp(\grg^+)$. Then $c(D_3,\grg) \leq 0$, hence $a_2 + a_3 \leq a_1$, and we get $2 \leq 2a_2 \leq a_1 + a_2 - a_3$. Being $\height(\grg^+) = 2$, it follows then $D = E = D_2$. Equivalently, $c(D_2,\grg) = a_1 + a_2 -a_3 =2$, and the inequalities $c(D_1,\grg) \leq 0$, $c(D_3,\grg) \leq 0$ imply $a_2 +a_3 \leq a_1 \leq a_2$. It follows $a_3 = 0$ and $a_1 = a_2 =1$, and the inequality $0 < a_5 \leq a_2$ imply $a_5 = 1$. Therefore $\grg = \grs_1+\grs_2+\grs_5$ and $F = D_4 +D_5$.

Suppose now $a_4 = 0$ and $\grg^+ = D_2+D_3$. Then the equalities $c(D_2,\grg) = c(D_3,\grg) = 1$ imply $a_3 - a_1 = a_2 -1 = 1 - a_2$, and it follows $a_1 = a_3$ and $a_2 = 1$. Therefore the inequality $0 < a_5 \leq a_2$ implies $a_5 = 1$, and the inequality $c(D_1,\grg) \leq 0$ implies $a_1 \leq a_2$. Therefore either $\grg = \grs_2 + \grs_5$ and $F = D_1 + D_4 + D_5$, or $\grg = \grs_1 + \grs_2 + \grs_3 + \grs_5$ and $F = 2D_5$.

Suppose finally $a_4 > 0$. Notice that, if $(D,E,F)$ is a low triple, then $D_2 \not \in \supp(\grg^+)$: indeed $\grs_2 + \grs_4 + \grs_5 \leq \grg$, and if e.g.\ $D = D_2$ then $D_1 <_\grS D$ and $F \leq_\grS D_1 +E$. Therefore $c(D_2,\grg) \leq 0$, hence $0 < a_1+a_2 \leq a_3$. It follows $c(D_4, \grg) = -a_2 +a_3+a_4 \geq a_1 + a_4 > 0$, therefore $D_4 \in \supp(\grg^+)$. Being $\height(\grg^+) = 2$, in particular it must be $a_4 \leq 2$.

Suppose $a_4 = 1$. Then $c(D_3,\grg) = -a_1 +a_2 +a_3 -a_4 \geq 2a_2 -a_4 > 0$, hence $\grg^+ = D_3 + D_4$. Therefore $c(D_3,\grg) = c(D_4,\grg) = 1$ and we get the equalities $a_2+a_3 = a_1 +2$ and $a_2 = a_3$. The inequality $c(D_2, \grg) \leq 0$ implies then $a_1 +a_2 \leq a_2$, hence $a_1 = 0$, $a_2 = a_3 = 1$, and $a_5 = 1$ thanks to the inequality $0 < a_5 \leq a_2$. Therefore $\grg = \grs_2+\grs_3+\grs_4+\grs_5$ and $F = D_1+D_5$, and $(D,E,F)$ is a low triple since $D_3,D_4$ are both minuscule.

Suppose now $a_4 = 2$. Then $c(D_4, \grg) = -a_2 +a_3+a_4 \geq a_1 + a_4 \geq 2$, and being $\height(\grg^+) = 2$ it follows $\grg^+ = 2D_4$, and moreover we get $a_1 = 0$ and $a_2 = a_3$. By the inequalities $c(D_2, \grg) \leq 0$, $c(D_3, \grg) \leq 0$ we get then $a_1 + a_2 \leq a_3$ and  $-a_1+a_2+a_3 -a_4 \leq 0$. On the other hand $c(D_3,\grg) = -a_1+a_2+a_3- a_4 \geq 2 a_2-a_4 = 2a_2 -2 \geq 0$, therefore $c(D_3,\grg) = 0$ and it follows $a_2 = 1$, and $a_5 = 1$ as well thanks to the inequality $0 < a_5 \leq a_2$. Therefore $\grg = \grs_2 + \grs_3 +2\grs_4 +\grs_5$ and $F = D_1$, and $(D,E,F)$ is a low triple since $D_4$ is minuscule.
\end{proof}

To prove the projective normality of $X$ we now apply Lemma \ref{lemma: supporto moltiplicazione}. This requires some 
computations. We first need an explicit description of the invariants. Let $V = \mC^3$ with standard basis given by $e_1,e_2,e_3$.
Let $W=\mC^{2n}$ where $n=t+1$ and we choose a basis $e'_1,\dots,e'_n,e'_{-n},\dots, e'_{-1}$ and fix a symplectic 
form such that $\omega(e'_i,e'_j)=\grd_{i,-j}$ for $i>0$.

We set
$\mathsf\Lambda^2_0 W=\{ \gra \in \mathsf\Lambda^2 W : \langle \omega , \gra\rangle=0\}$ and $\omega^*= \sum_{i=1}^n e'_i\wedge e'_{-i}$. Let 
$\grf_1, \grf_2,\grf_3$ be the basis of $V^*$ dual to $e_1,e_2,e_3$. Notice that the isomorphism from  $\mathsf\Lambda^2 V$ to $V^*$ sending
$e_1 \wedge e _2$ to $\grf_3$,
$e_1 \wedge e _3$ to $-\grf_2$ and
$e_2 \wedge e _3$ to $\grf_1$ is $G$-equivariant.

We set $G=\mathrm{SL}(V^*)\times \mathrm{Sp}(W,\omega)$, so that we can take $H$ as the stabilizer of the line spanned by the vector 
$e=e_1\otimes e'_{-2} -e_2\otimes e'_{2} - e_3 \otimes e'_{1}$.

We denote by $h_i$ the vector $h_{D_i}\in V_{D_i}^*$. In coordinates the vectors $h_i$ are given as follows

\begin{itemize}
\item[-] $V^*_{D_4} = V\otimes W$ and $h_4=e$;

\item[-] $V^*_{D_5} = W$ and $h_5=e'_{1}$;

\item[-] $V^*_{D_3} = V\otimes \mathsf\Lambda^2_0 W$ and 
$h_3=e_1\otimes(e'_{1}\wedge e'_{-2}) -e_2 \otimes (e'_{1}\wedge e'_{2})$;

\item[-] $V^*_{D_2} = V^*\otimes \mathsf\Lambda^2_0 W$ and 
$$h_2=
\grf_3 \otimes \left( e'_{2} \wedge e'_{-2} - \frac 1n \omega^* \right)
-\grf_2 \otimes (e'_{1} \wedge e'_{-2}) 
-\grf_1 \otimes (e'_{1} \wedge e'_{2});
$$

\item[-] $V^*_{D_1} = V^*$ and $h_1=\grf_3$.
\end{itemize}

We can now prove 

\begin{proposition}
The multiplication $m_{D,E}$ is surjective for all $D,E \in \mN\grD$.
\end{proposition}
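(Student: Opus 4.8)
The plan is to apply Lemma~\ref{lem:riduzionetriple}. By Lemma~\ref{lemma:coveringXXX} every covering difference $\grg \in \mN\grS$ satisfies $\height(\grg^+) \leq 2$, so we may take $n = 2$ in that lemma; it is then enough to check that $s^{D+E-F}V_F \subset V_D V_E$ for every low triple $(D,E,F)$ with $\height(D+E) \leq 2$, that is, for every low fundamental triple (those with $D, E \in \grD$, the degenerate cases being trivial). As already observed, the $G$-stable divisor of $X$ attached to $\grs_5$ is a parabolic induction of the comodel wonderful variety of cotype $\sfA_5$, so the low triples $(D,E,F)$ with $\grs_5 \notin \supp_\grS(D+E-F)$ reduce to those of that comodel variety and are settled by \cite[Theorem~5.2]{BGM} together with Lemma~\ref{lemma:coveringXXX}. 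Thus I am left with exactly the six low fundamental triples listed in Lemma~\ref{lem:tripleXX}.

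For each such triple $(D,E,F)$ I would invoke Lemma~\ref{lemma: supporto moltiplicazione}: since $F \leq_\grS D+E$, the inclusion $s^{D+E-F}V_F \subset V_D V_E$ holds if and only if the projection of $h_D \otimes h_E \in V(\gro_D^*) \otimes V(\gro_E^*)$ onto the isotypic component of highest weight $\gro_F^*$ is non-zero. Since $D_1, D_3, D_4, D_5$ are minuscule, in every one of the six cases $\gro_F^*$ is a sum of the weights $\gro_{D_i}^*$ with $i \in \{1,4,5\}$, which I would compute explicitly and split into its $\mathrm{SL}(V^*)$-part and its $\mathrm{Sp}(W)$-part. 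Writing $V_D^* \otimes V_E^*$ as the tensor product of its $\sfA_2$-factors and of its $\sfC_{t+1}$-factors, I would identify the target constituent inside it, for instance $\mathsf\Lambda^2 V \cong V^*$ or the traceless part of $V^* \otimes V$ on the $\sfA_2$ side, and $\mathrm{Sym}^2 W = V(2\gro_1')$, $W$, or the trivial summand of $W \otimes W$, $\mathsf\Lambda^2_0 W \otimes W$, $\mathsf\Lambda^2_0 W \otimes \mathsf\Lambda^2_0 W$ on the $\sfC_{t+1}$ side. The relevant projection is realized concretely by wedging or contracting the $V$-factors against each other and by symplectic contraction and symmetrization of the $W$-factors, and I would evaluate it on the explicit coordinate vectors $h_1,\dots,h_5$ and check that the result does not vanish.

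The only genuine obstacle is this non-vanishing verification for the six triples. Three of them, namely $(D_3,D_3,\cdot)$, $(D_2,D_2,\cdot)$ and $(D_4,D_4,\cdot)$, involve a symmetric tensor square $h_D \otimes h_D$, and here one must check a parity compatibility: in each case the $\sfA_2$-projection is an antisymmetric (wedge) map onto $\mathsf\Lambda^2 V$ or $\mathsf\Lambda^2 V^*$, while the matching $\sfC_{t+1}$-constituent ($\mathrm{Sym}^2 W$, or the trivial module) lies in the antisymmetric part $\mathsf\Lambda^2(\mathsf\Lambda^2_0 W)$ or $\mathsf\Lambda^2 W$ of the relevant tensor square, so that the product of the two antisymmetries is symmetric and the symmetric vector $h_D \otimes h_D$ can, and upon explicit evaluation does, have non-zero image. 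A further point requiring care is the trace-correction term $-\frac1n\omega^*$ in $h_2$, which guarantees $h_2 \in V^* \otimes \mathsf\Lambda^2_0 W$ and must be carried through the contractions in the triples built from $h_2$ in order to exclude an accidental cancellation. Since Lemma~\ref{lemma: supporto moltiplicazione} only requires the projection to be non-zero, a single explicit evaluation of the equivariant contraction settles each of the six triples, and hence proves that $m_{D,E}$ is surjective for all $D,E \in \mN\grD$.
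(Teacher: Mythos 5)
Your proposal follows essentially the same route as the paper: the same reduction via Lemma~\ref{lemma:coveringXXX} and Lemma~\ref{lem:riduzionetriple} to the six low fundamental triples of Lemma~\ref{lem:tripleXX}, the same disposal of the triples with $\grs_5 \notin \supp_\grS(D+E-F)$ via the $G$-stable divisor (a parabolic induction of the comodel variety of cotype $\sfA_5$), and the same use of Lemma~\ref{lemma: supporto moltiplicazione} with explicit equivariant contractions evaluated on $h_1,\dots,h_5$. The only thing still to be supplied is the actual evaluation of the six contractions (the paper writes out each map $\pi$ and computes $\pi(h_i\otimes h_j)\neq 0$ case by case), but your description of the maps and of the symmetry and trace-correction subtleties is accurate, so these verifications are routine.
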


\begin{proof}
By Lemma \ref{lemma:coveringXXX} every covering difference $\grg \in \mN \grS$ satisfies $\height(\grg^+) \leq 2$, therefore by Lemma~\ref{lem:riduzionetriple} it is enough to check that $s^{D+E-F}V_F\subset V_D\cdot V_E$ for all low fundamental triples $(D,E,F)$.

Suppose that $\grs_5 \notin \supp_\grS(D+E-F)$ and let $X'$ be the $G$-stable divisor of $X$ corresponding to $\grs_5$. Then $X'$ is a parabolic induction of a comodel wonderful variety of cotype $\sfA_5$, hence the inclusion $s^\grg V_F \subset V_D\cdot V_E$ follows by Lemma~\ref{lem: projnorm parabolic induction} together with \cite[Theorem~5.2]{BGM}.

By Lemma \ref{lemma: supporto moltiplicazione} we are reduced to prove that for all low triples $(D_i,D_j,F)$ listed in Lemma~\ref{lem:tripleXX} 
the projection of $h_i\otimes h_j$ onto the isotypic component of type 
$V^*_{F}$ in $V_{D_i}^*\otimes V_{D_j}^*$ is non-zero. 

$(D_2,D_3,D_1+D_4+D_5)$. 
We have $V^*_{D_1+D_4+D_5}=\mathfrak{sl}(V)\otimes \mathsf S^2W$,
the equivariant map 
$$
\pi\colon \big(V^* \otimes \mathsf\Lambda_0^2 W\big)  \otimes \big(V \otimes \mathsf\Lambda_0^2 W\big) 
\lra \mathfrak{sl}(V)\otimes \mathsf S^2W 
$$
given by
\begin{align*}
\pi&\bigg(\big(\grf\otimes a\wedge b \big)\otimes \big(v\otimes c\wedge d \big) \bigg) = \\
&=\bigg(\grf\otimes v - \frac 13 \grf(v) \mathrm{Id} \bigg) \otimes \bigg( \omega(a,c) bd - \omega(b,c) ad -\omega(a,d)bc+\omega(b,d)ac \bigg)
\end{align*}
and  
$$
\pi(h_2\otimes h_3)=
(\grf_3\otimes e_1)\otimes e'_1 e'_{-2}
-(\grf_3\otimes e_2)\otimes e'_1 e'_2 
+(\grf_1\otimes e_1 +\grf_2\otimes e_2)\otimes (e'_1)^2
\neq 0.
$$

$(D_3,D_3, D_1 +2 D_5)$.
We have $V^*_{D_1 +2 D_5}=\mathsf\Lambda^2 V\otimes \mathsf S^2W$,
the equivariant map
$$
\pi\colon \big(V \otimes \mathsf\Lambda_0^2 W\big)  \otimes \big(V \otimes \mathsf\Lambda_0^2 W\big) 
\lra \mathsf\Lambda^2 V\otimes \mathsf S^2W 
$$
given by
\begin{align*}
\pi&\Big(\big(u\otimes a\wedge b \big)\otimes \big(v\otimes c\wedge d \big) \Big) = \\
&=\Big(u\wedge v\Big) \otimes \Big( \omega(a,c) bd - \omega(b,c) ad -\omega(a,d)bc+\omega(b,d)ac \Big)
\end{align*}
and  
$$
\pi(h_3\otimes h_3)=
2(e_1\wedge e_2)\otimes (e'_1)^2
\neq 0.
$$

$(D_2,D_2, D_4 +D_5)$.
We have $V^*_{D_4 +D_5}=\mathsf\Lambda^2 V^*\otimes \mathsf S^2W$,
the equivariant map
$$
\pi\colon \big(V^* \otimes \mathsf\Lambda_0^2 W\big)  \otimes \big(V^* \otimes \mathsf\Lambda_0^2 W\big) 
\lra \mathsf\Lambda^2 V^*\otimes \mathsf S^2W 
$$
given by
\begin{align*}
\pi&\Big(\big(\varphi\otimes a\wedge b \big)\otimes \big(\psi\otimes c\wedge d \big) \Big) = \\
&=\Big(\varphi\wedge \psi\Big) \otimes \Big( \omega(a,c) bd - \omega(b,c) ad -\omega(a,d)bc+\omega(b,d)ac \Big)
\end{align*}
and  
$$
\pi(h_2\otimes h_2)=
2\big(
(\grf_3\wedge \grf_2)\otimes e'_1e'_{-2}
+(\grf_3\wedge \grf_1)\otimes e'_1e'_{2}
-(\grf_2\wedge \grf_1)\otimes (e'_1)^2
\big)
\neq 0.
$$

$(D_2,D_3,2 D_5)$.
We have $V^*_{2D_5}=\mathsf S^2W$,
the equivariant map
$$
\pi\colon \big(V^* \otimes \mathsf\Lambda_0^2 W\big)  \otimes \big(V \otimes \mathsf\Lambda_0^2 W\big) 
\lra \mathsf S^2W 
$$
given by
\begin{align*}
\pi&\Big(\big(\varphi\otimes a\wedge b \big)\otimes \big(v\otimes c\wedge d \big) \Big) = \\
&=\varphi(v) \Big( \omega(a,c) bd - \omega(b,c) ad -\omega(a,d)bc+\omega(b,d)ac \Big)
\end{align*}
and  
$$
\pi(h_2\otimes h_3)=
-2 (e'_1)^2
\neq 0.
$$

$(D_3,D_4, D_1+D_5)$.
We have $V^*_{D_1+D_5}=\mathsf\Lambda^2 V\otimes W$,
the equivariant map
$$
\pi\colon \big(V \otimes \mathsf\Lambda_0^2 W\big)  \otimes \big(V \otimes W\big) 
\lra \mathsf\Lambda^2 V\otimes W
$$
given by
$$
\pi\Big(\big(u\otimes a\wedge b \big)\otimes \big(v\otimes c \big) \Big) = 
\Big(u\wedge v\Big)\otimes \Big( \omega(a,c) b - \omega(b,c) a \Big)
$$
and  
$$
\pi(h_3\otimes h_4)=
-2 (e_1\wedge e_2)\otimes e'_1
\neq 0.
$$

$(D_4, D_4, D_1)$.
We have 
the equivariant map
$$
\pi\colon \big(V \otimes W\big)  \otimes \big(V \otimes W\big) 
\lra \mathsf\Lambda^2 V
$$
given by
$$
\pi\Big(\big(u\otimes a\big)\otimes \big(v\otimes b \big) \Big) = 
\omega(a,b)\Big(u\wedge v\Big)
$$
and  
\[
\pi(h_4\otimes h_4)=
-2 (e_1\wedge e_2)
\neq 0.	\qedhere
\]
\end{proof}


\subsection{Projective normality of $\mathsf{a^y}(s,s) + \mathsf{b}'(t)$}
Consider the wonderful variety $X$ for a semisimple group $G$ of type $\sfA_s \times \sfB_{s+t}$
with $s,t\geq 1$ defined by the following spherical system.
\[\begin{picture}(23800,3600)(-300,-1800)
\multiput(0,0)(9100,0){2}{
\put(0,0){\usebox{\edge}}
\multiput(0,0)(1800,0){2}{\usebox{\aone}}
\put(5400,0){\usebox{\aone}}
}
\put(1800,0){\usebox{\susp}}
\multiput(10900,0)(7200,0){2}{\usebox{\susp}}
\multiput(14500,0)(1800,0){2}{\usebox{\edge}}
\put(21700,0){\usebox{\rightbiedge}}
\put(16300,0){\usebox{\gcircletwo}}
\multiput(0,900)(9100,0){2}{\line(0,1){900}}
\put(0,1800){\line(1,0){9100}}
\multiput(1800,900)(9100,0){2}{\line(0,1){600}}
\put(1800,1500){\line(1,0){7200}}
\put(9200,1500){\line(1,0){1700}}
\multiput(5400,900)(9100,0){2}{\line(0,1){300}}
\put(5400,1200){\line(1,0){3600}}
\put(9200,1200){\line(1,0){1600}}
\put(11000,1200){\line(1,0){3500}}
\multiput(1800,-900)(7300,0){2}{\line(0,-1){900}}
\put(1800,-1800){\line(1,0){7300}}
\multiput(3600,-1500)(0,300){3}{\line(0,1){150}}
\put(3600,-1500){\line(1,0){5400}}
\put(9200,-1500){\line(1,0){1700}}
\put(10900,-1500){\line(0,1){600}}
\put(5400,-900){\line(0,-1){300}}
\put(5400,-1200){\line(1,0){3600}}
\put(9200,-1200){\line(1,0){1600}}
\put(11000,-1200){\line(1,0){1700}}
\multiput(12700,-1200)(0,300){2}{\line(0,1){150}}
\multiput(0,600)(1800,0){2}{\usebox{\toe}}
\multiput(10800,600)(3600,0){2}{\usebox{\tow}}
\end{picture}\]
The spherical data and the Cartan pairing associated to this Luna diagram are described in Section~\ref{sss:typeB}. 
For convenience we number the spherical roots in the following way:
$$
\sigma_{2i-1} = \alpha_i, \quad \sigma_{2i} = \alpha'_i\quad
\text{for }i=1,\dots,s; \quad 
\sigma_{2s+1} = 
\sum_{i=s+1}^{s+t} 2\alpha'_i.
$$
There are $2s+2$ colors that we label in the following way:
\begin{align*}
D_{2i-1} &= D_{\alpha_i}^-, \quad\text{for }i=1,\dots,s, & D_{2s+1}& = D_{\gra'_{s}}^- \\
D_{2i}   &= D_{\alpha_i}^+, \quad\text{for }i=1,\dots,s, & D_{2s+2}& = D_{\gra'_{s+1}}.
\end{align*}  
The weights of these colors are the following:
\begin{align*}
\omega_{D_{2i-1}}&=\omega_{i}+\omega_{i-1}' \quad\text{ for }i=2,\dots,s & 
\omega_{D_{1}}& = \omega_1,\quad \omega_{D_{2s+1}} =\omega'_s,  \\
\omega_{D_{2i}}&=\omega_i+\omega_i' \quad\text{ for }i=1,\dots,s,& 
\omega_{D_{2s+2}}&=\tilde \omega'_{s+1}.
\end{align*}
where $\tilde \omega'_{s+1}=\omega'_{s+1}$ if $t>1$ and $\tilde\omega'_{s+1}=2\omega'_{s+1}$ if $t=1$.

Notice that $X$ has the same Cartan matrix of the spherical nilpotent orbit studied in \cite[Section~7.3]{BGM}. It follows that the covering differences and the fundamental low triples are the same as those computed therein, since they only depend on the Cartan matrix. In particular, every covering difference $\grg$ satisfies $\height(\grg^+) = 2$, and every fundamental triple is low. In order to prove the projective normality of $X$, in the following lemma we summarize some properties of its fundamental triples.

\begin{lemma}	\label{lemma:fund-triples-YYY}
Let $(D_p, D_q, F)$ be a fundamental triple, denote $\grg = D_p + D_q -F$ and suppose that $\grs_{2s+1} \in \supp_\grS(\grg)$. Then $p,q$ are even integers and $\grs_1 \not \in \supp_\grS(\grg)$. If moreover $\grs_2 \in \supp_\grS(\grg)$, then $p+q-3 \leq 2s+1$ and $F = D_1 + D_{p+q-3}$.
\end{lemma}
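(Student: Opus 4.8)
The plan is to read off all three assertions from three $\mZ$-linear functionals on $\mZ\grD$, using the explicit Cartan pairing of $\mathsf{a^y}(s,s)+\mathsf b'(t)$ recalled in Section~\ref{sss:typeB}. Write $\grg=\sum_{k=1}^{2s+1}a_k\grs_k$ with all $a_k\geq0$ and $a_{2s+1}>0$, and set $F=\sum_j f_jD_j\in\mN\grD$, so that the defining relation $\grg=D_p+D_q-F$ gives the $D_j$-coefficient $c(D_j,\grg)=\grd_{jp}+\grd_{jq}-f_j$ for all $j$ (in particular $c(D_j,\grg)\leq0$ whenever $D_j\notin\{D_p,D_q\}$, and $\height(\grg^+)\leq2$). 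Consider $\ell_0,\ell_1,\ell_2\colon\mZ\grD\to\mZ$ defined by $\ell_0(D_j)=1$, $\ell_1(D_j)=(-1)^j$ and $\ell_2(D_j)=j$. A direct check on the Cartan pairing shows that each $\ell_i$ annihilates every intermediate spherical root $\grs_k$ with $2\leq k\leq2s$, the only surviving contributions coming from $\grs_1=D_1+D_2-D_3$ and from the tail root $\grs_{2s+1}=-2D_{2s+1}+2D_{2s+2}$; explicitly $\ell_0(\grg)=a_1$, $\ell_1(\grg)=a_1+4a_{2s+1}$ and $\ell_2(\grg)=2a_{2s+1}$.

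First I would apply $\ell_0$ to $\grg=D_p+D_q-F$, obtaining $\height(F)=2-a_1$, hence $a_1\in\{0,1,2\}$. Next, evaluating $\ell_1$ gives $\ell_1(F)=(-1)^p+(-1)^q-a_1-4a_{2s+1}$, and the trivial bound $|\ell_1(F)|\leq\height(F)=2-a_1$ forces a short case analysis according to the value of $(-1)^p+(-1)^q\in\{-2,0,2\}$. Since $a_{2s+1}\geq1$, the cases $0$ and $-2$ violate the bound outright, whereas the case $+2$ survives only when $a_1=0$ and $a_{2s+1}=1$. This yields at once that $p$ and $q$ are even and that $a_1=0$, i.e.\ $\grs_1\notin\supp_\grS\grg$, proving the first two assertions, with the extra information $a_{2s+1}=1$.

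Finally, assuming $\grs_2\in\supp_\grS\grg$, i.e.\ $a_2>0$, the relation $c(D_1,\grg)=a_1-a_2=-a_2<0$ forces $D_1\notin\{D_p,D_q\}$ and $f_1=a_2$, so $a_2\in\{1,2\}$ because $\height(F)=2$. Evaluating $\ell_2$ gives $\ell_2(F)=p+q-2$; together with $\height(F)=2$ and $f_1=a_2$ this identifies the remaining color of $F$ as $D_{p+q-3}$ (in the degenerate case $a_2=2$ one gets $F=2D_1$ and $p=q=2$, still matching $p+q-3=1$), so $F=D_1+D_{p+q-3}$. Since $F\in\mN\grD$ the index $p+q-3$ is a color index, hence $\leq2s+2$; being $p+q$ even it is odd, so in fact $p+q-3\leq2s+1$. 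The only genuine work is the bookkeeping hidden in the phrase ``$\ell_i$ annihilates the intermediate roots'' — a telescoping along the $\mathsf A$-chain that I expect to be the main, though routine, obstacle — together with the parity case analysis for $\ell_1$, which is the crux making the evenness of $p,q$ and the vanishing of $a_1$ fall out simultaneously.
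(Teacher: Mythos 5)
Your proof is correct, and it takes a genuinely different route from the paper's. The paper argues by choosing a chain of coverings $F = F_{n+1} <_\grS \cdots <_\grS F_1 = D_p+D_q$ and invoking the classification of covering differences for this Cartan matrix (imported from Propositions~3.2 and~7.3 of the reference on comodel varieties): each step is one of three explicit types, and the claims are read off by tracking the parity and the drift of the indices $p_i,q_i$ along the chain. You instead work with the single relation $\grg = D_p+D_q-F$ and the three functionals $\ell_0,\ell_1,\ell_2$, which (as I checked against the Cartan pairing $\grs_k=-D_{k-1}+D_k+D_{k+1}-D_{k+2}$ for $1\leq k\leq 2s$, with $D_0=0$, and $\grs_{2s+1}=-2D_{2s+1}+2D_{2s+2}$) do indeed kill every $\grs_k$ with $2\leq k\leq 2s$ and give $\ell_0(\grg)=a_1$, $\ell_1(\grg)=a_1+4a_{2s+1}$, $\ell_2(\grg)=2a_{2s+1}$; the bound $|\ell_1(F)|\leq\height(F)=2-a_1$ then forces the parity statement, $a_1=0$ and $a_{2s+1}=1$ exactly as you say, and the last claim follows from $f_1=a_2$ and $\ell_2(F)=p+q-2$ (your parenthetical remark that $c(D_1,\grg)<0$ ``forces'' $D_1\notin\{D_p,D_q\}$ is unnecessary --- this is automatic since $p,q$ are even --- but harmless). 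What your approach buys is self-containedness: it avoids the external classification of covering differences entirely and even yields the extra fact $a_{2s+1}=1$. What the paper's approach buys is the structural description of the whole covering chain, which is of the same nature as the information used elsewhere in that section (e.g.\ that every covering difference has $\height(\grg^+)=2$, whence every fundamental triple is low); your argument proves the lemma but not those auxiliary facts, which would still have to be quoted separately in the subsequent proposition.
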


\begin{proof}
Take a sequence of coverings in $\mathbb N\Delta$
\[ F = F_{n+1} <_\Sigma F_n <_\Sigma\ldots <_\Sigma F_1 = D_p + D_q\] 
Denote $\grg_i = F_i- F_{i+1}$. By Proposition~3.2 and Proposition~7.3 in \cite{BGM} we have the following three possibilities:
\begin{enumerate}
\item $\grg_i = \grs_{p_i}+\grs_{p_i+2}+\ldots+\grs_{q_i-1}=D_{p_i}+D_{q_i}-D_{p_i-1}-D_{q_i+1}$, for some integers $p_i,q_i$ of different parity with $1\leq p_i< q_i\leq 2s+1$,
\item $\grg_i =\grs_{p_i-1}+\grs_{p_i} +\ldots+\grs_{q_i} = D_{p_i} + D_{q_i} - D_{p_i-2} - D_{q_i+2}$, for some integers $p_i,q_i$ of the same parity with $2\leq p_i\leq q_i \leq 2s$,
\item $\grg_i = \grs_{p_i} + \grs_{p_i+2}+ \ldots +\grs_{q_i-2} + 2(\grs_{q_i} + \grs_{q_i+2} + \ldots + \grs_{2s}) + \grs_{2s+1} = D_{p_i} + D_{q_i} - D_{p_i-1} - D_{q_i-1}$, for some even integers $p_i,q_i$ with $2\leq p_i\leq q_i \leq 2s$.
\end{enumerate}

Since $\grs_{2s+1} \in \supp_\grS(\grg)$, there is at least one $\grg_i$ of type 3. Let $k$ be minimal with $\grg_k$ of type 3, because of the parity of $p_k$ and $q_k$, the previous description implies that every $\grg_j$ with $j \neq k$ is of type 2. Moreover, it follows that $p_{i+1} = p_i-2$ and $q_{i+1} = q_i+2$ for all $i \neq k$, and that $p_i$, $q_i$ are even (resp.\ odd) for all $i \leq k$ (resp.\ $i > k$).

Therefore $p= p_1$ and $q= q_1$ are even integers and $2\leq p \leq q \leq 2s+2$, and we get the equalities $p_{n+1}= p-2n-1$ and $q_{n+1} = q+2n-1$. Suppose that $k = n$: then $p_n$ and $q_n$ are even and $2 \leq p_n \leq q_n \leq 2s+2$, hence $1 \leq p_{n+1} \leq q_{n+1} \leq 2s+1$. Suppose instead $k < n$, then $p_n$ and $q_n$ are odd and $2 \leq p_n \leq q_n \leq 2s$, and again we get $1 \leq p_{n+1} \leq q_{n+1} \leq 2s+1$.

To show the first claim, notice that $\grs_1 \in \supp_\grS(\grg)$ if and only if $\grs_1 \in \supp_\grS(\grg_{n})$: this is not the case if $k=n$, and if $k <n$ it cannot happen as well, since then $p_n$ and $q_n$ would be odd. Similarly, $\grs_2 \in \supp_\grS(\grg)$ if and only if $\grs_2 \in \supp_\grS(\grg_n)$ if and only if $p_{n+1} = 1$. This means $n=\frac p2-1$, which implies $q_{n+1} = p+q -3$.
\end{proof}

To prove the projective normality of $X$ we will apply Lemma~\ref{lemma: supporto moltiplicazione}. First we describe the invariants. Let $V = \mC^{s+1}$ with standard basis given by $e_1,\dots,e_{s+1}$.
Let $W=\mC^{2n+1}$ where $n=s+t$ and we choose a basis $e'_1,\dots,e'_n,e'_0,e'_{-n},\dots, e'_{-1}$   
and fix a bilinear symmetric form such that $\grb(e'_i,e'_j)=\grd_{i,-j}$ for all $i,j\geq 0$. 
Set $G=\mathrm{SL}(V^*)\times \mathrm{SO}(W,\grb)$, so that we can take $H$ as the stabilizer of the line 
spanned by the vector $e = e_1\otimes e'_0 + \sum_{i=2}^{s+1} e_{i}\otimes e'_{s-i+2}$.
We have
$$V^*_{D_{2i-1}}= \mathsf\Lambda^i V \otimes \mathsf\Lambda^{i-1} W \qquad V^*_{D_{2i}}  = \mathsf\Lambda^i V \otimes \mathsf\Lambda^i W$$
for $i=1,\dots,s+1$. If we denote by $h_i$ the vector $h_{D_i}\in V_{D_i}^*$ then in coordinates the vectors $h_i$ 
are given as follows:
\begin{align*}
h_{2i-1} =& \sum_{2\leq j_1<\ldots<j_{i-1}\leq s+1} e_1\wedge e_{j_1}\wedge\ldots\wedge e_{j_{i-1}} 
\otimes e'_{s-j_{i-1}+2}\wedge\ldots\wedge e'_{s-j_1+2},\\
h_{2i}   =& \sum_{2\leq j_1<\ldots<j_{i-1}\leq s+1} e_1\wedge e_{j_1}\wedge\ldots\wedge e_{j_{i-1}} 
\otimes e'_{s-j_{i-1}+2}\wedge\ldots\wedge e'_{s-j_1+2}\wedge e'_0 +\\
&+ \sum_{2\leq j_1<\ldots<j_{i}\leq s+1} e_{j_1}\wedge\ldots\wedge e_{j_{i}} 
\otimes e'_{s-j_{i}+2}\wedge\ldots\wedge e'_{s-j_1+2}.
\end{align*}

\begin{proposition}
The multiplication $m_{D,E}$ is surjective for all $D,E \in \mN \grD$.
\end{proposition}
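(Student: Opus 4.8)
The plan is to follow verbatim the scheme used for $\mathsf{a}^\mathsf{y}(2,2)+\mathsf{c}(t)$ in Section~\ref{ss:ay22ct}. As recalled just before the statement, $X$ has the same Cartan matrix as the variety of \cite[Section~7.3]{BGM}, so every covering difference $\grg \in \mN\grS$ satisfies $\height(\grg^+) = 2$ and every fundamental triple is low. Hence by Lemma~\ref{lem:riduzionetriple} it suffices to prove, for every fundamental triple $(D_p, D_q, F)$, the inclusion $s^{D_p+D_q-F} V_F \subset V_{D_p} V_{D_q}$; and by Lemma~\ref{lemma: supporto moltiplicazione} this is in turn equivalent to showing that the projection of $h_p \otimes h_q \in V^*_{D_p} \otimes V^*_{D_q}$ onto the isotypic component of highest weight $\omega^*_F$ is non-zero. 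So the whole proof reduces to a finite list of explicit projection computations in the exterior algebras of $V$ and $W$.

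First I would dispose of the triples with $\grs_{2s+1} \notin \supp_\grS(D_p + D_q - F)$. Such a triple is combinatorially supported on the $G$-stable divisor $X'$ of $X$ corresponding to $\grs_{2s+1}$, whose set of spherical roots is $\{\grs_1, \ldots, \grs_{2s}\}$; this $X'$ is a parabolic induction of a comodel wonderful variety of type $\mathsf A$, for which the surjectivity of the multiplications is already established in \cite[Theorem~5.2]{BGM}. Thus the required inclusion follows from Lemma~\ref{lem: projnorm parabolic induction} together with \cite{BGM}, exactly as in the parallel step of Section~\ref{ss:ay22ct}. This leaves only the fundamental triples with $\grs_{2s+1} \in \supp_\grS(D_p+D_q-F)$, which are precisely the ones controlled by Lemma~\ref{lemma:fund-triples-YYY}: there $p$ and $q$ are forced to be even, $\grs_1 \notin \supp_\grS(\grg)$, and the target is sharply constrained, being $F = D_1 + D_{p+q-3}$ (with $p+q-3 \le 2s+1$) whenever $\grs_2 \in \supp_\grS(\grg)$. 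Sorting the remaining triples according to whether $\grs_2$ is or is not in the support, and recording how the extremal color $D_{2s+2}$ (whose weight is $\tilde\omega'_{s+1}$, equal to $\omega'_{s+1}$ for $t>1$ and to $2\omega'_{s+1}$ for $t=1$) enters, gives a short explicit list to check.

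The heart of the argument is then the verification, for each such triple, that $\pi(h_p \otimes h_q) \neq 0$ for a suitable $G$-equivariant map $\pi$. Using the identifications $V^*_{D_{2i-1}} = \mathsf\Lambda^i V \otimes \mathsf\Lambda^{i-1} W$ and $V^*_{D_{2i}} = \mathsf\Lambda^i V \otimes \mathsf\Lambda^i W$, the natural projection onto $V^*_F$ is built by wedging the two $V$-factors together and contracting the appropriate $W$-factors against the symmetric form $\grb$, in perfect analogy with the maps $\pi$ of Section~\ref{ss:ay22ct}, where the symplectic form $\omega$ played the role now played by $\grb$. Since each $h_{2i-1}$ and $h_{2i}$ is a sum, over increasing multi-indices $2 \le j_1 < \ldots < j_k \le s+1$, of decomposable tensors assembled from $e_1, e_{j_\bullet}$ and $e'_0, e'_{s-j_\bullet+2}$, evaluating $\pi(h_p \otimes h_q)$ reduces to bookkeeping which pairs of multi-indices survive the wedge-and-contract operation.

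I expect the main obstacle to be exactly this last point: after imposing the $\grb$-contractions and the wedge relations many terms either vanish or must cancel in pairs, and one has to make sure that the total does not cancel to zero. The cleanest route, which I would adopt, is to single out one distinguished monomial in the image — typically the one carried by the extremal indices, so that it can arise from a \emph{unique} pair $(j_\bullet)$ of multi-indices on each side — and to check that its coefficient in $\pi(h_p \otimes h_q)$ is non-zero (it will be $\pm 1$ or $\pm 2$, as in the sample computations of Section~\ref{ss:ay22ct}). Since no other term of the expansion can contribute to that particular monomial, its survival forces $\pi(h_p \otimes h_q) \neq 0$ and hence the desired inclusion, completing the proof for $X$ and, via the reductions of Section~2, for all the varieties $Y$ of this family.
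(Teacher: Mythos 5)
Your reduction to fundamental low triples and your treatment of the triples with $\grs_{2s+1}\notin\supp_\grS(D+E-F)$ coincide with the paper's argument. The divergence---and the gap---is in what remains. Lemma~\ref{lemma:fund-triples-YYY} pins down $F$ only when $\grs_2\in\supp_\grS(\grg)$ (namely $F=D_1+D_{p+q-3}$); when $\grs_2\notin\supp_\grS(\grg)$ it says nothing about $F$ beyond the parity of $p,q$, and its proof shows that these triples have the form $(D_{2\ell},D_{2m},D_{2\ell-2n-1}+D_{2m+2n-1})$ for every admissible $n$ --- an unbounded family as $s$ grows, not ``a short explicit list''. The paper does not compute these at all: it observes that for such $\grg$ one has $\grs_1,\grs_2\notin\supp_\grS(\grg)$, passes to the intersection $X''$ of the two $G$-stable divisors corresponding to $\grs_1,\grs_2$, which is a parabolic induction of $\mathsf{a}^\mathsf{y}(s-1,s-1)+\mathsf{b}'(t)$ (of a rank-one symmetric variety when $s=1$), and closes by induction on $s$ together with Lemma~\ref{lem: projnorm parabolic induction} and the fact that the triple is low. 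Without some such reduction your route leaves an infinite family of triples unclassified and unchecked.

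Second, your description of the one computation that genuinely must be done (the case $F=D_1+D_{2\ell+2m-3}$) points at a map that fails. Here $\gro_F=\gro_1+\gro_{\ell+m-1}+\gro'_{\ell+m-2}$, so the relevant target is the Cartan component $V_{\gro_1+\gro_{\ell+m-1}}$ inside $V\otimes\mathsf\Lambda^{\ell+m-1}V$ (tensored with $\mathsf\Lambda^{\ell+m-2}W$), not the wedge $\mathsf\Lambda^{\ell+m}V$; and the paper's computation shows precisely that the ``wedge the $V$-factors and contract the $W$-factors'' map $\pi_3\otimes\pi_1$ vanishes identically on $h_{2\ell}\otimes h_{2m}$. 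One must instead use the map $\pi_2$ splitting off a single vector of $V$, verify that $\pi_2\otimes\pi_1$ is non-zero on the invariants while its component in $\mathsf\Lambda^{\ell+m}V$ is zero, and conclude that the projection onto $V_{\gro_1+\gro_{\ell+m-1}}$ survives. A distinguished-monomial argument applied to the wedge-and-contract map would only ever detect terms that cancel.
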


\begin{proof}
As already noticed, Propositions~3.2 and~7.3 in \cite{BGM} show that every covering difference $\grg \in \mN\grS$ satisfies $\height(\grg^+) = 2$. It follows that every $D \in \grD$ is minimal in $\mN \grD$ w.r.t.\ $\leq_\grS$, hence every fundamental triple is low. Therefore by Lemma~\ref{lem:riduzionetriple} we have to check that $s^{D+E-F} V_F \subset V_D\cdot V_E$ for all fundamental triples $(D,E,F)$. Let $(D,E,F)$ be such a triple and denote $\grg = D+E-F$.

Suppose that $\grs_{2s+1} \notin \supp_\grS(\grg)$, and let $X'$ be the $G$-stable divisor of $X$ corresponding to the spherical root $\grs_{2s+1}$. Then $X'$ is a parabolic induction of a comodel wonderful variety of cotype $\sfA_{2s+1}$ (see \cite[Section~5]{BGM}). Hence the inclusion $s^\grg V_F \subset V_D\cdot V_E$ follows by Lemma~\ref{lem: projnorm parabolic induction} together with \cite[Theorem~5.2]{BGM}.

Suppose that $\grs_{2s+1} \in \supp_\grS(\grg)$, and assume $D = D_p$ and $E = D_q$, then $\grs_1 \not \in \supp_\grS(\grg)$ by Lemma~\ref{lemma:fund-triples-YYY}. We show that $s^\grg V_F \subset V_D\cdot V_E$ proceeding by induction on $s$.

Suppose that $\grs_2 \not \in \supp_\grS(\grg)$. Then $\supp_\grS(\grg) \subset \{\grs_3, \ldots, \grs_{2s+1}\}$. Let $X''$ be the $G$-stable subvariety of $X$ obtained by intersecting the $G$-stable divisors corresponding to $\grs_1$ and to $\grs_2$. If $s > 1$, then $X''$ is a parabolic induction of the wonderful variety of type $\mathsf{a}^\mathsf{y}(s-1, s-1)+\mathsf{b}'(t)$, therefore the multiplication of sections of globally generated line bundles on $X''$ is always surjective by the inductive hypothesis thanks to Lemma~\ref{lem: projnorm parabolic induction}. If instead $s =1$, then $X''$ is a parabolic induction of a rank 1 wondeful symmetric variety $Y$, which is homogeneous under its automorphism group, therefore the multiplication of sections of globally generated line bundles on $Y$ is always surjective, and the same holds for $X''$ by Lemma~\ref{lem: projnorm parabolic induction} again. In particular, since $(D,E,F)$ is a low triple, it follows the inclusion $s^\grg V_F \subset V_D\cdot V_E$.

Suppose now that $\grs_2 \in \supp_\grS(\grg)$. Then by Lemma \ref{lemma:fund-triples-YYY} it follows that $p = 2 \ell$ and $q = 2m$ are even integers, and $F = D_1+D_{2\ell+2m-3}$ with $2\ell+2m-3\leq2s+1$. Hence by Lemma~\ref{lemma: supporto moltiplicazione}
we need to find an equivariant map
$$ \grf\colon\big(\mathsf\Lambda^{\ell}V \otimes \mathsf\Lambda^{\ell}W\big) \otimes
\big(\mathsf\Lambda^{m}V\otimes \mathsf\Lambda^{m}W\big)
\lra V_{\omega_1+\omega_{\ell+m-1}}\otimes \mathsf\Lambda^{\ell+m-2}W$$
such that $\grf(h_{2\ell}\otimes h_{2m})\neq 0$ 
(the formula makes sense also when $\ell+m-1=s+1$, by setting $\omega_{s+1} = 0$).
Notice that $V\otimes \mathsf\Lambda^{\ell+m-1}V\isocan V_{\omega_1+\omega_{\ell+m-1}} \oplus \mathsf\Lambda^{\ell+m}V$ 
and we denote by $\rho_1$ and $\rho_2$ the projection respectively onto the first and onto the second factor. 
In particular the map $\rho_2$, up to a scalar factor, 
is just the wedge product. 
We will construct a map $$ \psi\colon\big(\mathsf\Lambda^{\ell}V \otimes \mathsf\Lambda^{\ell}W\big) \otimes
\big(\mathsf\Lambda^{m}V\otimes \mathsf\Lambda^{m}W\big)
\lra \big(V\otimes \mathsf\Lambda^{\ell+m-1}V \big)\otimes \mathsf\Lambda^{\ell+m-2}W$$
such that $\psi(h_{2\ell}\otimes h_{2m})\neq 0$ and $(\rho_2\otimes \mathrm{Id})\circ \psi(h_{2\ell}\otimes h_{2m})=0$ so that the map 
$\grf=(\rho_1\otimes \mathrm{Id})\circ \psi$ will have the desired properties.

Let $\pi_1\colon\mathsf\Lambda^{\ell}W\otimes \mathsf\Lambda^m W\lra \mathsf\Lambda^{\ell+m-2} W$ be defined by
\begin{align*}
\pi_1&(u_1\wedge \dots \wedge u_{\ell}\otimes v_1\wedge \dots \wedge v_m )\\ 
&=\sum_{i,j}(-1)^{i+j}\grb(u_i,v_j)
u_1\wedge \dots\wedge \hat u_i \wedge \dots\wedge u_{\ell+1}\wedge v_1\wedge \dots \wedge \hat v_j \wedge \dots \wedge v_m 
\end{align*}
Let $\pi_2\colon\mathsf\Lambda^{\ell}V\otimes \mathsf\Lambda^m V\lra V\otimes \mathsf\Lambda^{\ell+m-1} V$ be defined by
\begin{align*}
\pi_2&(u_1\wedge \dots \wedge u_{\ell}\otimes v_1\wedge \dots \wedge v_m )\\ 
&=\sum_{i}(-1)^{i}
u_i\otimes u_1\wedge \dots\wedge \hat u_i \wedge \dots\wedge u_{\ell}\wedge v_1\wedge \dots \wedge v_m 
\end{align*}
Let $\pi_3\colon\mathsf\Lambda^{\ell}V\otimes \mathsf\Lambda^m V\lra \Lambda^{\ell+m} V$ be defined by $\pi_3(x\otimes y)=x\wedge y$. Finally, set $\psi=\pi_2\otimes \pi_1$, so that $(\rho_2\otimes \mathrm{Id})\circ \psi = \pi_3\otimes \pi_1$.

Notice that the value of $\pi_2\otimes\pi_1$ (resp.\ $\pi_3\otimes\pi_1$) on $h_{2\ell}\otimes h_{2m}$ is the same as that on
\begin{align*}
&\sum_{2\leq j_1<\ldots<j_{\ell-1}\leq s+1} e_1\wedge e_{j_1}\wedge\ldots\wedge e_{j_{\ell-1}} 
\otimes e'_{s-j_{\ell-1}+2}\wedge\ldots\wedge e'_{s-j_1+2}\wedge e'_0\\
&\otimes\sum_{2\leq k_1<\ldots<k_{m-1}\leq s+1} e_1\wedge e_{k_1}\wedge\ldots\wedge e_{k_{m-1}} 
\otimes e'_{s-k_{m-1}+2}\wedge\ldots\wedge e'_{s-k_1+2}\wedge e'_0.
\end{align*} 

The first is equal to
$$
{\ell+m-2\choose \ell-1}\sum \big(e_1\otimes e_1\wedge e_{i_1}\wedge\ldots\wedge e_{i_{\ell+m-2}}\big) 
\otimes e'_{s-i_{\ell+m-2}+2}\wedge\ldots\wedge e'_{s-i_1+2}
$$
(the sum being over $2\leq i_1<\ldots<i_{\ell+m-2}\leq s+1$), the second is equal to zero.
\end{proof}


\subsection{Projective normality of $\mathsf{ab^y}(s, s)$} \label{ss:abyss}

Consider the wonderful variety $X$ for a semisimple group $G$ of type $\sfA_s \times \sfB_{s}$ 
with $s\geq2$
defined by the following spherical system
\[\begin{picture}(17700,4200)(-300,-2100)
\put(0,0){\usebox{\edge}}
\put(1800,0){\usebox{\susp}}
\put(5400,0){\usebox{\edge}}
\put(9900,0){\usebox{\edge}}
\put(11700,0){\usebox{\susp}}
\put(15300,0){\usebox{\rightbiedge}}
\multiput(0,0)(9900,0){2}{\multiput(0,0)(5400,0){2}{\multiput(0,0)(1800,0){2}{\usebox{\aone}}}}
\multiput(0,900)(9900,0){2}{\line(0,1){1200}}
\put(0,2100){\line(1,0){9900}}
\multiput(1800,900)(9900,0){2}{\line(0,1){900}}
\put(1800,1800){\line(1,0){8000}}
\put(10000,1800){\line(1,0){1700}}
\multiput(5400,900)(9900,0){2}{\line(0,1){600}}
\put(5400,1500){\line(1,0){4400}}
\put(10000,1500){\line(1,0){1600}}
\put(11800,1500){\line(1,0){3500}}
\multiput(7200,900)(9900,0){2}{\line(0,1){300}}
\put(7200,1200){\line(1,0){2600}}
\put(10000,1200){\line(1,0){1600}}
\put(11800,1200){\line(1,0){3400}}
\put(15400,1200){\line(1,0){1700}}
\multiput(1800,-900)(8100,0){2}{\line(0,-1){1200}}
\put(1800,-2100){\line(1,0){8100}}
\multiput(3600,-1800)(0,300){3}{\line(0,1){150}}
\put(3600,-1800){\line(1,0){6200}}
\put(10000,-1800){\line(1,0){1700}}
\put(11700,-1800){\line(0,1){900}}
\put(5400,-900){\line(0,-1){600}}
\put(5400,-1500){\line(1,0){4400}}
\put(10000,-1500){\line(1,0){1600}}
\put(11800,-1500){\line(1,0){1700}}
\multiput(13500,-1500)(0,300){2}{\line(0,1){150}}
\multiput(7200,-900)(8100,0){2}{\line(0,-1){300}}
\put(7200,-1200){\line(1,0){2600}}
\put(10000,-1200){\line(1,0){1600}}
\put(11800,-1200){\line(1,0){1600}}
\put(13600,-1200){\line(1,0){1700}}
\multiput(0,600)(1800,0){2}{\usebox{\toe}}
\put(5400,600){\usebox{\toe}}
\put(11700,600){\usebox{\tow}}
\multiput(15300,600)(1800,0){2}{\usebox{\tow}}
\end{picture}\]
The spherical data and the Cartan pairing associated to this Luna diagram are described in Section~\ref{sss:typeB}. 
The spherical roots are simple roots, for convenience we enumerate them in the following way:
$$
\sigma_{2i-1} = \alpha_i, \quad \sigma_{2i} = \alpha'_i\quad
\text{for }i=1,\dots,s.
$$
There are $2s+1$ colors that we label in the following way:
$$
D_{2i-1}= D_{\alpha_i}^-, \quad D_{2i} = D_{\alpha_i}^+ \quad\text{for }i=1,\dots,s;
\quad D_{2s+1} = D_{\gra'_{s}}^-.
$$  
The weights of these colors are the following:
\begin{eqnarray*}
&&\omega_{D_{1}} = \omega_1,\qquad\qquad
\omega_{D_{2i}}=\omega_i+\omega_i' \quad\text{ for }i=1,\dots,s,\\
&&\omega_{D_{2i-1}}=\omega_{i}+\omega_{i-1}' \quad\text{ for }i=2,\dots,s,\qquad\qquad
\omega_{D_{2s+1}} =\omega'_s.
\end{eqnarray*}

Notice that the $G$-stable divisor of $X$ corresponding to $\grs_{2s}$ is a parabolic induction of a comodel wonderful variety of cotype $\sfA_{2s}$ (see \cite[Section~5]{BGM}). Therefore we can restrict our study to the covering differences and the low triples of $X$ which contain $\grs_{2s}$.

\begin{lemma} 	\label{lemma:covering-ab^y(s,s)}
Let $\gamma\in\mathbb N\Sigma$ be a covering difference in $\mathbb N\Delta$ 
with $\sigma_{2s}\in\supp_\Sigma \gamma$.
Then either $\gamma=\sigma_{2s}=D_{2s}+D_{2s+1}-D_{2s-1}$, or $\grg = \grs_{2s-1}+ \grs_{2s} = -D_{2s-2} + 2D_{2s}$, or $\gamma=\sum_{i=\ell}^{s}\sigma_{2i}=D_{2\ell}-D_{2\ell-1}$
for some $1\leq\ell< s$. Every other covering difference satisfies $\height(\grg^+) = 2$.
\end{lemma}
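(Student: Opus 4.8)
The plan is to expand $\gamma = \sum_{k=1}^{2s} a_k\,\sigma_k$ (with $a_k\geq 0$) in the basis of colors, using the Cartan pairing recorded in Section~\ref{sss:typeB}, and to read off each coefficient $c(D_j,\gamma)=\sum_k a_k\,c(D_j,\sigma_k)$. This writes every $c(D_j,\gamma)$ as an explicit alternating linear form in the $a_k$; in particular $c(D_{2s-1},\gamma)=-a_{2s-3}+a_{2s-2}+a_{2s-1}-a_{2s}$, $c(D_{2s},\gamma)=-a_{2s-2}+a_{2s-1}+a_{2s}$ and $c(D_{2s+1},\gamma)=-a_{2s-2}-a_{2s-1}+a_{2s}$, while the interior colors obey the telescoping pattern coming from the relations $\sigma_{2i-1}$ and $\sigma_{2i}$. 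Throughout I will use the \emph{absorption criterion} already exploited in the proof of Lemma~\ref{lemma:coveringXXX}: if $\sigma\in\mathbb N\Sigma$ satisfies $\sigma\leq_\Sigma\gamma$ with $0\neq\sigma\neq\gamma$ and $\gamma^-+\sigma\in\mathbb N\Delta$ (equivalently every color with a negative coefficient in $\sigma$ already lies in $\supp(\gamma^-)$), then $\gamma^-+\sigma$ is a lattice point strictly between $\gamma^-$ and $\gamma^+$, so $\gamma$ is not a covering difference; dually, if all the positive colors of $\sigma$ lie in $\supp(\gamma^+)$ with sufficient multiplicity then $\gamma^+-\sigma$ gives the same obstruction.

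I would first dispose of the case $a_{2s}=0$, which yields the last sentence of the statement. The $G$-stable divisor corresponding to $\sigma_{2s}$ is a parabolic induction of the comodel wonderful variety of cotype $\sfA_{2s}$, and a covering difference $\gamma$ with $\sigma_{2s}\notin\supp_\Sigma(\gamma)$ can neither absorb nor allow an intermediate point involving $\sigma_{2s}$ (its support stays inside $\Sigma\smallsetminus\{\sigma_{2s}\}$); hence such $\gamma$ are exactly the covering differences of that comodel variety, all of which satisfy $\height(\gamma^+)=2$ by \cite[Proposition~3.2]{BGM}.

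There remains $a_{2s}>0$. The single root $\gamma=\sigma_{2s}$ is automatically a covering difference, giving the first case. Assuming $\gamma\neq\sigma_{2s}$, absorbing $\sigma_{2s}=-D_{2s-1}+D_{2s}+D_{2s+1}$ into $\gamma^-$ is possible exactly when $D_{2s-1}\in\supp(\gamma^-)$, so $c(D_{2s-1},\gamma)\geq 0$; and since removing $\sigma_{2s}$ from $\gamma^+$ needs both $D_{2s},D_{2s+1}\in\supp(\gamma^+)$, while $c(D_{2s},\gamma)-c(D_{2s+1},\gamma)=2a_{2s-1}\geq 0$, we get $c(D_{2s+1},\gamma)\leq 0$. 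I then split on whether $\sigma_{2s-1}=\alpha_s$ occurs. If $a_{2s-1}>0$, absorbing (resp.\ removing) the combination $\sigma_{2s-1}+\sigma_{2s}=-D_{2s-2}+2D_{2s}$ forces $c(D_{2s-2},\gamma)\geq 0$ and $c(D_{2s},\gamma)\leq 1$, and removing $\sigma_{2s-1}$ itself forbids $c(D_{2s-1},\gamma)>0$ together with $c(D_{2s},\gamma)>0$; feeding all these sign constraints back into the explicit forms pins down $a_{2s-1}=a_{2s}=1$ with all lower coefficients zero, i.e.\ the second case. If instead $a_{2s-1}=0$, then $c(D_{2s-1},\gamma)\geq 0$ already forces $a_{2s-2}\geq a_{2s}\geq 1$, and I descend along the chain of even roots: at each stage the sign conditions obtained by absorbing $\sigma_{2s},\sigma_{2s-2},\dots$ into $\gamma^-$ (or removing them from $\gamma^+$) force the next even coefficient to equal $1$, annihilate every odd coefficient, and forbid gaps, producing $\gamma=\sum_{i=\ell}^{s}\sigma_{2i}=D_{2\ell}-D_{2\ell-1}$ for some $1\leq\ell<s$, the third case. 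Finally I would confirm that each of the three listed $\gamma$ is genuinely a covering difference by checking that none of its finitely many proper sub-sums in $\mathbb N\Sigma$ can be absorbed or removed.

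The main obstacle will be the descending analysis when $a_{2s-1}=0$, and the analogous tightening when $a_{2s-1}>0$: one must control the signs of all the interior coefficients $c(D_j,\gamma)$ simultaneously, and the telescoping shape of the relations means that a single nonzero odd coefficient $a_{2i-1}$, an excess multiplicity, or a gap in the even support propagates a negative value to some $c(D_{2j-1},\gamma)$, thereby reopening an absorption of $\sigma_{2s}$ or of an intermediate root (as the candidate $\sigma_{2s-2}+\sigma_{2s-1}+\sigma_{2s}$ already illustrates, where only the \emph{removal} of $\sigma_{2s-1}$ from $\gamma^+$ disqualifies it). Making this propagation precise, and separating it cleanly from the two boundary relations $\sigma_{2s-2}$ and $\sigma_{2s}$ of the collapsed tail, is the delicate bookkeeping; by contrast the comodel reduction and the final verification that the three $\gamma$ really are covering are comparatively routine.
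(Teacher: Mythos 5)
Your plan is essentially the paper's proof: the same coordinate expressions $c_i=-a_{i-2}+a_{i-1}+a_i-a_{i+1}$ (with the boundary corrections at $i=2s$ and $i=2s+1$), the same absorption/removal criterion for sub-sums of $\gamma$ in $\mathbb N\Sigma$, the same reduction of the case $\sigma_{2s}\notin\supp_\Sigma\gamma$ to the comodel variety of cotype $\mathsf A_{2s}$ via \cite[Proposition~3.2]{BGM}, and the same recursive descent along the chain of roots; the paper merely organizes the case split around the minimal $k$ with $c_{2s-2k}>0$ rather than around the vanishing of $a_{2s-1}$, which comes to the same thing in view of $c_{2s}-c_{2s+1}=2a_{2s-1}$ and $c_{2s+1}\leq 0$. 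One caution: in the branch $a_{2s-1}>0$, the local sign constraints you list ($c_{2s-1}\geq 0$, $c_{2s+1}\leq 0$, $c_{2s-2}\geq 0$, $c_{2s}\leq 1$, and not both $c_{2s-1}>0$ and $c_{2s}>0$) do \emph{not} by themselves pin down $\gamma=\sigma_{2s-1}+\sigma_{2s}$: for instance $\gamma=\sigma_{2s-4}+2\sigma_{2s-2}+\sigma_{2s-1}+\sigma_{2s}$ satisfies all of them (here $c_{2s}=c_{2s-2}=0$ and $c_{2s-1}=2$), and is excluded only because $c_{2s-3}=-1$ lets one absorb $\sigma_{2s-2}+\sigma_{2s}=-D_{2s-3}+D_{2s-2}$ into $\gamma^-$. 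So the full recursive propagation that you defer to your closing paragraph is indispensable in this branch as well --- the paper carries it out by showing $a_i>0$ and $c_i\leq 0$ all the way down the chain and reaching a contradiction at $c_2=a_1+a_2-a_3$ --- but with that understood your outline is sound.
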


\begin{proof}
Recall that the Cartan pairing is as follows 
(we also set $D_i=0$ for all $i\leq0$ and all $i\geq 2s+2$):
\begin{eqnarray*}
\sigma_i & = & -D_{i-1}+D_i+D_{i+1}-D_{i+2} \mbox{ for }i\neq 2s-2, \\
\sigma_{2s-2} & = & -D_{2s-3}+D_{2s-2}+D_{2s-1}-D_{2s}-D_{2s+1}.
\end{eqnarray*}

Set $\gamma=\sum_{i=1}^{2s}a_i\sigma_i=\sum_{i=1}^{2s+1}c_iD_i$. 
We have the following identities (we set $a_i=0$ for $i\leq0$ and $i\geq2s+1$)
\begin{eqnarray*}
c_i & = & -a_{i-2}+a_{i-1}+a_{i}-a_{i+1} \mbox{ for }i<2s+1, \\
c_{2s+1} & = & -a_{2s-2}-a_{2s-1}+a_{2s}.
\end{eqnarray*}

By hypothesis, we have $a_{2s}>0$.

Let $k \geq 0$ be minimal with $c_{2s-2k}>0$. Then $c_{2s-2i}\leq0$ for all $0\leq i<k$, and it follows
\[a_{2s-2j}-a_{2s-2j+1}\geq a_{2s-2j+2}-a_{2s-2j+3}\] 
for all $1\leq j\leq k$, and $a_{2s-2j}\geq a_{2s}>0$ for all $0\leq j\leq k$. 

If $k>0$, set $\gamma_0=\sum_{j=0}^k\sigma_{2s-2j}=-D_{2s-2k-1}+D_{2s-2k}$. Then $\gamma_0\leq_{\Sigma}\gamma$ and $\gamma^+-\gamma_0\in\mathbb N\Delta$, hence $\gamma=\gamma_0$ since $\grg$ is a covering difference.

We are left with the case $k=0$, in particular $c_{2s}>0$. We claim that $\gamma$ is necessarily equal to $\sigma_{2s}$ or to $\sigma_{2s-1}+\sigma_{2s}$.

Assume that $\gamma \neq \sigma_{2s}$ and $\grg \neq \sigma_{2s-1}+\sigma_{2s}$. Since $a_{2s}>0$ and $\sigma_{2s}=-D_{2s-1}+D_{2s}+D_{2s+1}$, it follows $c_{2s+1}\leq0$, hence $a_{2s-1}>0$.

Since $\sigma_{2s-1}=-D_{2s-2}+D_{2s-1}+D_{2s}-D_{2s+1}$, it must be $c_{2s-1}\leq0$, and since $\sigma_{2s-1}+\sigma_{2s}=-D_{2s-2}+2D_{2s}$, it must be $c_{2s}=1$.
The latter implies 
\[a_{2s-2}-a_{2s}=a_{2s-1}-1\geq0,\] 
hence $a_{2s-2}>0$. Notice that $c_{2s-2} \leq 0$: if indeed $c_{2s-2} > 0$, then $\grg^+ - 
(\sigma_{2s-2} + \sigma_{2s}) = \grg^+ - (-D_{2s-3} + D_{2s-2}) \in \mN \grD$, hence $\grg = -D_{2s-3} + D_{2s-2}$, contradicting $c_{2s} > 0$.

Let $j \geq 3$ be such that $a_{2s-j+1}-a_{2s-j+3}\geq0$, and suppose that $a_{2s-i+1}>0$ and $c_{2s-i+1}\leq0$ for all $i$ with $2\leq i\leq j$ (notice that these conditions have just been proved for $j=3$). As $c_{2s-j+2}\leq0$, it follows
\[a_{2s-j}-a_{2s-j+2}\geq a_{2s-j+1}-a_{2s-j+3}\geq0,\]
hence $a_{2s-j}>0$. 
This implies $c_{2s-j}\leq0$. If indeed $c_{2s-j}>0$, set 
\begin{eqnarray*}
\gamma_0&=&\sum_{i=0}^{j/2}\sigma_{2s-2i}=-D_{2s-j-1}+D_{2s-j}\mbox{ if $j$ is even},\\
\gamma_0&=&\sum_{i=1}^{(j+1)/2}\sigma_{2s-2i+1}=-D_{2s-j-1}+D_{2s-j}+D_{2s}-D_{2s+1}\mbox{ if $j$ is odd},
\end{eqnarray*}
then $\gamma_0\leq_{\Sigma}\gamma$ and $\gamma^+-\gamma_0\in\mathbb N\Delta$, hence $\gamma = \gamma_0$, contradicting $c_{2s}>0$ in the first case and $a_{2s}>0$ in the second case.

Applying this argument recursively for $j=3, \ldots, s-1$, it follows that $a_i>0$ for all $1\leq i\leq 2s$, $c_i\leq0$ for all $1\leq i\leq 2s-1$ and $a_1-a_3\geq0$. In particular $a_1+a_2-a_3=c_2\leq0$ and $a_1-a_3\geq0$, which are in contradiction. 

As already noticed, the $G$-stable divisor of $X$ corresponding to $\grs_{2s}$ is a parabolic induction of a comodel wonderful variety of cotype $\sfA_{2s}$. Therefore the covering differences $\grg$ with $\grs_{2s} \not \in \supp_\grS \grg$ coincide with those studied in \cite[Proposition~3.2]{BGM}, and they all satisfy $\height(\grg^+) = 2$.
\end{proof}

\begin{lemma}	\label{lemma:lowtriples-ab^y}
Let $(D,E,F)$ be a low fundamental triple, denote $\gamma=D+E-F$ 
and suppose that $\sigma_{2s}\in\supp_\Sigma\gamma$. 
Then we have the following possibilities:
\begin{itemize}
\item[-] $(D_{2m+1},D_{2s},D_{2m-1}+D_{2s+1})$ for $1\leq m< s$, $\gamma=\sum_{i=2m}^{2s}\sigma_i$;
\item[-] $(D_{2s},D_{2s},D_{2s-3})$, $\gamma=\sigma_{2s-2}+\sigma_{2s-1}+2\sigma_{2s}$;
\item[-] $(D_{2s},D_{2s},D_{2s-2})$, $\gamma=\sigma_{2s-1}+\sigma_{2s}$;
\item[-] $(D_{2s},D_{2s+1},D_{2s-1})$, $\gamma=\sigma_{2s}$.
\end{itemize}
\end{lemma}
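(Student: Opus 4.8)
The plan is to classify the low fundamental triples by a direct analysis of the coefficients, in the spirit of the proof of Lemma~\ref{lem:tripleXX}. Write $D=D_p$, $E=D_q$ and $\gamma=D+E-F=\sum_{i=1}^{2s}a_i\sigma_i$ with all $a_i\geq0$ and $a_{2s}>0$, and pass to the expression $\gamma=\sum_{i=1}^{2s+1}c_iD_i$ in the basis of colors, where by the Cartan pairing recorded in the proof of Lemma~\ref{lemma:covering-ab^y(s,s)}
$$c_i=-a_{i-2}+a_{i-1}+a_i-a_{i+1}\ (1\leq i\leq 2s),\qquad c_{2s+1}=-a_{2s-2}-a_{2s-1}+a_{2s},$$
with the convention $a_i=0$ for $i\leq0$ and $i\geq2s+1$. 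After the standard reduction to $\supp(F)\cap\supp(D+E)=\varnothing$ (the shared colors being harmless for the multiplication, as in Lemma~\ref{lem:tripleXX}), we have $\gamma^+=D_p+D_q$; thus the task becomes to find all nonnegative integer vectors $(a_i)$ with $a_{2s}>0$ for which exactly the coordinates $c_p,c_q$ are positive, each equal to $1$ (or a single coordinate equal to $2$ when $p=q$), all other $c_i\leq0$, subject to the additional constraint that the resulting triple cannot be lowered.

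The lowering is controlled by Lemma~\ref{lemma:covering-ab^y(s,s)}: the only covering differences whose positive part has height $1$ are $\delta_\ell=\sum_{i=\ell}^{s}\sigma_{2i}=D_{2\ell}-D_{2\ell-1}$ for $1\leq\ell<s$, all others having $\height(\gamma^+)=2$. Hence the non-minuscule colors are exactly $D_2,D_4,\ldots,D_{2s-2}$, each covering $D_{2\ell-1}$, while $D_1,D_3,\ldots,D_{2s-1},D_{2s},D_{2s+1}$ are minuscule; and the color $D_{2\ell}$ can be lowered (necessarily to $D_{2\ell-1}$, by subtracting $\delta_\ell$) precisely when $a_{2\ell},a_{2\ell+2},\ldots,a_{2s}\geq1$. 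Consequently, if a non-minuscule color $D_{2\ell}$ lay in $\gamma^+$ in a low triple, then one of $a_{2\ell},\ldots,a_{2s-2}$ would have to vanish, since $a_{2s}\geq1$ already.

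With these translations the enumeration becomes a finite case analysis, mirroring the split $a_4\in\{0,1,2\}$ in the proof of Lemma~\ref{lem:tripleXX} but now organised around the top coefficients $a_{2s-2},a_{2s-1},a_{2s}$. The first step is to show that no non-minuscule color occurs in $\gamma^+$, i.e.\ $p,q\in\{1,3,\ldots,2s-1\}\cup\{2s,2s+1\}$: assuming $D_{2\ell}\in\gamma^+$ and invoking the lowness obstruction above, the sign conditions $c_{2\ell}=1$ and $c_i\leq0$ for $i\notin\{p,q\}$ propagate down the chain of indices exactly as in the recursive argument of Lemma~\ref{lemma:covering-ab^y(s,s)} and collide with $a_{2s}>0$ and $\height(\gamma^+)=2$. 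Once $D_p$ and $D_q$ are both minuscule the triple is automatically low, and solving $c_p=c_q=1$ (or $c_p=2$), $c_i\leq0$ otherwise, together with $a_{2s}\geq1$, leaves exactly the four families, the subcase $a_{2s}=2$ versus $a_{2s}=1$ producing the two triples with $p=q=2s$ and $\gamma^+=2D_{2s}$, namely $F=D_{2s-3}$ and $F=D_{2s-2}$ respectively.

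The main obstacle I expect is precisely this telescoping bookkeeping: propagating the inequalities $c_i\leq0$ downward from the index $2s$ and combining them with the vanishing of some $a_{2j}$ forced by lowness, while keeping track of the exceptional Cartan identity at $i=2s-2$ (the extra $-D_{2s+1}$), which is what separates the configurations with $a_{2s}=1$ from those with $a_{2s}=2$ and pins down $F$ accordingly. The rest---verifying that the four listed triples indeed satisfy $c_p=c_q=1$ (or $c_p=2$) with the stated $F$, and that both colors are minuscule so that lowness holds automatically---is a routine check.
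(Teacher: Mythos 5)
Your plan is sound and its asserted ingredients are all correct: the reduction to $\supp(F)\cap\supp(D+E)=\varnothing$, the identification via Lemma~\ref{lemma:covering-ab^y(s,s)} of the minuscule colors as $D_1,D_3,\ldots,D_{2s-1},D_{2s},D_{2s+1}$, the criterion that $D_{2\ell}$ can be lowered inside the triple exactly when $a_{2\ell},a_{2\ell+2},\ldots,a_{2s}\geq 1$, and the final list including the $a_{2s}=1$ versus $a_{2s}=2$ dichotomy for $\gamma^+=2D_{2s}$. The paper's proof is the same coefficient-and-telescoping analysis anchored on Lemma~\ref{lemma:covering-ab^y(s,s)}, but it organizes the case split differently, and the difference is not merely cosmetic. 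The paper first rules out the case where both $D$ and $E$ are odd-indexed by a structural (non-computational) observation: a covering chain from $D+E$ down to $F$ would then consist only of covering differences of the comodel system of cotype $\sfA$, so $\sigma_{2s}$ could not lie in $\supp_\Sigma\gamma$. It then forces one of the two colors to be exactly $D_{2s}$ (lowering the color of minimal odd co-index by $\sum\sigma_{2s-2i}$), and only afterwards analyzes the remaining color. This ordering means the paper never has to examine candidate pairs such as $(D_{2j-1},D_{2k-1})$, $(D_{2j-1},D_{2s+1})$, $2D_{2s+1}$, or $(D_1,D_{2s})$ (the excluded $m=0$ case of your first family), whereas your step 3 must eliminate each of them by a separate inequality chase; some are quick (e.g.\ $c_{2s}-c_{2s+1}=2a_{2s-1}\geq 0$ shows $D_{2s+1}\in\gamma^+$ forces $D_{2s}\in\gamma^+$), but ruling out $D_{2j-1}+D_{2k-1}$ and $D_1+D_{2s}$ is genuine work, not a routine check.

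One further caution on your step 2: when the second color is odd-indexed, the even-index telescope $a_{2i}-a_{2i+1}\geq a_{2i+2}-a_{2i+3}$ started from $c_{2s}\leq 0$ does yield $a_{2i}\geq a_{2s}>0$ down to $i=\ell$ and contradicts lowness as you describe. But when the second color is $D_{2s}$ the starting inequality $c_{2s}\leq 0$ is replaced by $c_{2s}=1$, and you need the mixed-parity recursion of the paper's \lq\lq Suppose $p>1$\rq\rq\ paragraph (using $c_{2s}=1$ and $c_{2s+1}\leq 0$ to get $a_{2s-1}>0$ and $a_{2s-2}\geq a_{2s}$, then descending through consecutive indices) to conclude $a_{2j}>0$ on the whole range. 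So \lq\lq propagates exactly as in Lemma~\ref{lemma:covering-ab^y(s,s)}\rq\rq\ is right in spirit, but the propagation is a different telescope in that subcase and should be spelled out.
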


\begin{proof}
Set $D+E-F=\sum_{i=1}^{2s}a_i\sigma_i=\sum_{i=1}^{2s+1}c_iD_i$, 
and set also $D=D_{2s-p+1}$ and $E=D_{2s-q+1}$. By hypothesis, we have $a_{2s}>0$.

At least one of the two indices $p$ and $q$ must be odd. 
Indeed, if both $p$ and $q$ were even, taking a sequence
\[ F = F_n <_\Sigma F_{n-1} <_\Sigma\ldots <_\Sigma F_0 = D+E\] 
of coverings in $\mathbb N\Delta$, 
$F_{i-1}-F_i$ would necessarily be a covering difference of a comodel spherical system of cotype $\mathsf A$,  
see \cite[Proposition~3.2.(2)]{BGM}, hence $\sigma_{2s}\not\in\supp_\grS(D+E-F)$.

We claim that at least one of the two indices $p$ and $q$ must be equal to $1$. 

Let us prove the claim. Assume both $p$ and $q$ are different from $1$. 
We can assume that $q$ is the minimal odd number between $p$ and $q$. 
Since $c_{2s-2i}\leq0$ for every $0\leq i <(q-1)/2$, as in the above proof, 
it follows that $a_{2s-2i} \geq a_{2s} > 0$ for every $0\leq i \leq (q-1)/2$.
Set 
\[\gamma_0 = \sum_{i= 0}^{(q-1)/2} \sigma_{2s-2i} = - D_{2s-q} +D_{2s-q+1}\] 
and $E'=D_{2s-q+1}-\gamma_0$, 
then $E'\in\mathbb N\Delta$ and $F\leq_\Sigma D+E' <_\Sigma D+E$, 
hence $(D,E,F)$ is not a low triple.

Therefore, we can assume $q=1$.

Suppose $p=0$. We have $D_{2s}+D_{2s+1}-\sigma_{2s}= D_{2s-1}$, but the latter is minuscule 
therefore we get only $(D_{2s},D_{2s+1},D_{2s-1})$.

Suppose $p=1$. We have $-a_{2s-2}-a_{2s-1}+a_{2s}\leq0$ and $-a_{2s-2}+a_{2s-1}+a_{2s}=2$, hence $a_{2s-1}>0$. 
Now, we have $2D_{2s}-(\sigma_{2s-1}+\sigma_{2s})=D_{2s-2}$, 
but in $\mathbb N\Delta$ the latter covers only $D_{2s-3}$, with $D_{2s-2}-(\sigma_{2s-2}+\sigma_{2s})=D_{2s-3}$.
Therefore, if $a_{2s-2}=0$ or $a_{2s}=1$ we get only $(D_{2s},D_{2s},D_{2s-2})$. 
If $a_{2s-2}>0$ and $a_{2s}>1$, since $D_{2s-3}$ is minuscule, we get only $(D_{2s},D_{2s},D_{2s-3})$.

Suppose $p>1$. We have $-a_{2s-2}-a_{2s-1}+a_{2s}\leq0$ and $-a_{2s-2}+a_{2s-1}+a_{2s}=1$, 
hence $a_{2s-1}>0$ and, since 
\[a_{2s-2}-a_{2s}=a_{2s-1}-1\geq0,\]
also $a_{2s-2}>0$.    
For every $1<i<p$, as $c_{2s-i+1}\leq0$, we have
\[a_{2s-i-1}-a_{2s-i+1}\geq a_{2s-i}-a_{2s-i+2}.\]
Therefore, $a_{2s-j+1}>0$ for all $1\leq j\leq p+1$.

If $p$ is odd, set
\[\gamma_0=\sum_{i=0}^{(p-1)/2}\sigma_{2s-2i}=-D_{2s-p}+D_{2s-p+1}\]
and $D'=D_{2s-p+1}-\gamma_0$, 
then $D'\in\mathbb N\Delta$ and $F\leq_\Sigma D'+E <_\Sigma D+E$, 
hence $(D,E,F)$ is not a low triple.

If $p$ is even, we have
\[D_{2s-p+1}+D_{2s}-\sum_{i=1}^{p+1}\sigma_{2s-i+1}=D_{2s-p-1}+D_{2s+1},\]
but the latter is minuscule. 
We get $(D_{2s-p+1},D_{2s},D_{2s-p-1}+D_{2s+1})$.
\end{proof}

To prove the projective normality of $X$ we will apply Lemma \ref{lemma: supporto moltiplicazione}. This requires some 
computations, and we will need an explicit description of the invariants $h_3$ and $h_{2s}$. 

Fix $V=\mC^{s+1}$ with standard basis $e_1,\dots,e_{s+1}$ and $W =\mC^{2s+1}$ with basis
$e'_1,\dots,e'_s$, $e'_0,e'_{-s},\dots,e'_{-1}$ and with a symmetric bilinear form $\grb$ such that $\grb(e_i,e_j)=\grd_{i,-j}$.
Set $G =\mathrm{SL}(V^*)\times\mathrm{Spin}(W,\grb)$, so that we can take $H$ as the stabilizer of the line spanned by the vector 
$e = e_1\otimes e'_0 + \sum_{i=2}^{s+1} e_{i}\otimes e'_{s-i+2}$.

We will need to use the spin module for the group $\mathrm{Spin}(W)$. 
Let us recall its construction. 
Let $W=U\oplus \mC e'_0 \oplus U^*$ 
where $U$ is the span of $e'_1,\dots,e'_s$ and 
$U^*$ is the span of $e'_{-s},\dots,e'_{-1}$ identified with the dual of $U$ by the form $\grb$. 
Let $S=\mathsf\Lambda U^*$ and rename the basis of $U^*$ as 
$\psi_n=e'_{-n},\ldots,\psi_1=e'_{-1}$. 
Define a map 
$\pi_S\colon W\otimes S\lra S$ by setting
$\pi_S(e'_i\otimes \psi_{i_1}\wedge \dots\wedge \psi_{i_k})$ equal to
\[\begin{array}{ll}
\sum_{j=1}^k (-1)^{j-1} \beta(e'_i,e'_{-i_j}) \psi_{i_1}\wedge \dots\wedge \hat \psi_{i_j} \wedge \dots\wedge \psi_{i_k}
& \mbox{if }i>0,\\
(-1)^k\psi_{i_1}\wedge \dots\wedge \psi_{i_k} & \mbox{if }i=0,\\
\psi_{-i} \wedge\psi_{i_1}\wedge \dots\wedge \psi_{i_k} & \mbox{if }i<0.
\end{array}\]
Then $\pi_S$ is $G$-equivariant, and its alternating square $\pi_S^2\colon\mathsf\Lambda^2W\otimes S\lra S$ corresponds to the spin representation via the isomorphism $\mathsf\Lambda^2W\cong\mathfrak{so}(W,\beta)$.
We have
\[V^*_{D_1}=V,\quad
V^*_{D_3}=\mathsf\Lambda^2V\otimes W,\quad
V^*_{D_{2s}}=\mathsf\Lambda^sV\otimes S,\quad
V^*_{D_1+D_{2s+1}}=V\otimes S.\]
The invariants, in coordinates, are
\begin{eqnarray*}
h_3&=&\sum_{i=2}^{s+1}e_1\wedge e_i\otimes e'_{s-i+2}\\
h_{2s}&=&e_2\wedge\ldots\wedge e_{s+1}\otimes 1 + \sum_{i=2}^{s+1}(-1)^{i-1}e_1\wedge\ldots\wedge\hat e_i\wedge\ldots\wedge e_{s+1}\otimes \psi_{s-i+2}
\end{eqnarray*}

\begin{proposition}	\label{prop:projnorm-ab^y}
The multiplication $m_{D,E}$ is surjective for all $D,E \in \mN \grD$.
\end{proposition}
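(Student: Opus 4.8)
The plan is to follow verbatim the pattern of the two preceding propositions. By Lemma~\ref{lemma:covering-ab^y(s,s)} every covering difference $\grg \in \mN\grS$ satisfies $\height(\grg^+) \le 2$, so Lemma~\ref{lem:riduzionetriple} reduces the statement to checking that $s^{D+E-F}V_F \subset V_D \cdot V_E$ for every low fundamental triple $(D,E,F)$, and by Lemma~\ref{lemma: supporto moltiplicazione} this amounts to showing that the projection of $h_i \otimes h_j$ onto the isotypic component of type $V^*_F$ is non-zero. If $\grs_{2s} \notin \supp_\grS(D+E-F)$, then $D+E-F$ lies in the submonoid generated by $\grs_1,\ldots,\grs_{2s-1}$, and I would pass to the $G$-stable divisor of $X$ attached to $\grs_{2s}$: this divisor is a parabolic induction of the comodel wonderful variety of cotype $\sfA_{2s}$, so the inclusion follows from Lemma~\ref{lem: projnorm parabolic induction} together with \cite[Theorem~5.2]{BGM}, exactly as in the two previous proofs. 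There remain the low fundamental triples with $\grs_{2s} \in \supp_\grS(D+E-F)$, which are the four families classified in Lemma~\ref{lemma:lowtriples-ab^y}.

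For these I would argue by induction on $s$. Inspecting the four families one checks that $\grs_1 \notin \supp_\grS(D+E-F)$ always, and that $\grs_2 \in \supp_\grS(D+E-F)$ only for the triple $(D_3,D_{2s},D_1+D_{2s+1})$ (the case $m=1$ of the first family) and, when $s=2$, for $(D_{2s},D_{2s},D_{2s-3})$. In all the other cases $\grs_2 \notin \supp_\grS(D+E-F)$, and I would localize $X$ at the wonderful subvariety obtained by intersecting the $G$-stable divisors of $\grs_1$ and $\grs_2$: this subvariety is a parabolic induction of the wonderful variety of type $\mathsf{ab^y}(s-1,s-1)$ (or, in the degenerate base case $s=2$, of a product of rank-one symmetric varieties, whose multiplications are surjective by \cite{CM_projective-normality}). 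Hence by the inductive hypothesis and Lemma~\ref{lem: projnorm parabolic induction} its multiplications are surjective, and since $(D,E,F)$ is a low triple the inclusion $s^\grg V_F \subset V_D \cdot V_E$ follows. This leaves only the triple $(D_3,D_{2s},D_1+D_{2s+1})$ at every $s\geq2$ together with the single base triple $(D_4,D_4,D_1)$ at $s=2$, both of which must be checked by hand.

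The heart of the argument is therefore the explicit verification for $(D_3,D_{2s},D_1+D_{2s+1})$, which is precisely why only $h_3$ and $h_{2s}$ were recorded above. Here $V^*_{D_3} = \mathsf\Lambda^2 V \otimes W$, $V^*_{D_{2s}} = \mathsf\Lambda^s V \otimes S$ and $V^*_{D_1+D_{2s+1}} = V \otimes S$, and I would build the equivariant projection as the tensor product of a contraction $\mathsf\Lambda^2 V \otimes \mathsf\Lambda^s V \to V$ on the $\mathrm{SL}(V)$-factors (using $\mathsf\Lambda^s V \isocan V^*$) with the Clifford action $\pi_S \colon W \otimes S \to S$ on the $\mathrm{Spin}(W)$-factors. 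The main obstacle is the spin-module bookkeeping: one must track how $\pi_S$ --- which contracts the $e'_j$ with $j>0$, acts by $\pm\mathrm{Id}$ on $e'_0$ and wedges the $e'_{-j}$ --- pairs the $W$-component $e'_{s-i+2}$ of each summand of $h_3$ against the degree-one terms $\psi_{s-i+2}$ of $h_{2s}$. I expect only the diagonal terms (those with matching index $i$) to survive, producing a non-zero multiple of $e_1 \otimes 1 \in V \otimes S$. The remaining base triple $(D_4,D_4,D_1)$ at $s=2$ is of the same nature but simpler, its target $V^*_{D_1}=V$ being $\mathrm{Spin}(W)$-trivial, so here the $S$-factors are fed into an invariant pairing $S \otimes S \to \mC$ built from $\pi_S$ while the $V$-factors are wedged; a single surviving monomial again gives non-vanishing. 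Once these two computations are carried out the induction closes, establishing the proposition for all $s \geq 2$.
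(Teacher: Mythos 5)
Your proposal reproduces the paper's proof essentially step for step: the same reduction via Lemmas~\ref{lemma:covering-ab^y(s,s)}, \ref{lem:riduzionetriple} and \ref{lemma: supporto moltiplicazione}, the same use of the $G$-stable divisor of $\grs_{2s}$ for triples not involving $\grs_{2s}$, the same induction on $s$ via the intersection of the divisors of $\grs_1,\grs_2$, and the same two hand computations, namely $(D_3,D_{2s},D_1+D_{2s+1})$ for all $s$ and $(D_4,D_4,D_1)$ at $s=2$; your predictions for the shape of the two equivariant projections and for which terms survive (only the diagonal index matches, giving a multiple of $e_1\otimes 1$, resp.\ a multiple of $e_1$) agree with what the paper actually computes.

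The one concrete slip is your identification of the base case. For $s=2$ the intersection of the $G$-stable divisors of $\grs_1,\grs_2$ is \emph{not} a parabolic induction of a product of rank-one symmetric varieties: the remaining spherical roots $\gra_2,\gra'_2$ share the color $D_4$ (indeed $c(D_4,\gra_2)=c(D_4,\gra'_2)=1$), so the localization is a rank-two wonderful variety with three colors, whereas a product of two rank-one symmetric varieties would have four colors and no shared ones. It is the comodel wonderful variety of cotype $\sfA_3$ for the Levi of type $\sfA_1\times\sfA_1$, and the surjectivity you need there is covered by \cite[Theorem~5.2]{BGM} (which you already invoke elsewhere), not by \cite{CM_projective-normality}. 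As written, your treatment of the $s=2$ triples $(D_4,D_4,D_2)$ and $(D_4,D_5,D_3)$ rests on a false identification and a citation that does not apply; the repair is immediate, but it must be made. Beyond that, the two explicit projections are only sketched and would need to be carried out to complete the argument.
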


\begin{proof}
By Lemma \ref{lemma:covering-ab^y(s,s)} every covering difference $\grg \in \mN \grS$ satisfies $\height(\grg^+) \leq 2$, therefore by Lemma~\ref{lem:riduzionetriple} it is enough to check that $s^{D+E-F}V_F\subset V_D\cdot V_E$ for all low fundamental triples $(D,E,F)$.

Suppose that $\grs_{2s} \notin \supp_\grS(D+E-F)$ and let $X'$ be the $G$-stable divisor of $X$ corresponding to $\grs_{2s}$. Then $X'$ is a parabolic induction of a comodel wonderful variety of cotype $\sfA_{2s}$, hence the inclusion $s^\grg V_F \subset V_D\cdot V_E$ follows by Lemma~\ref{lem: projnorm parabolic induction} together with \cite[Theorem~5.2]{BGM}.

We are left to check that $s^{D+E-F}V_F\subset V_D V_E$ for all low fundamental triples $(D,E,F)$ with $\sigma_{2s}\in\supp_\Sigma(D+E-F)$.

Consider first the triple $(D_3,D_{2s},D_1+D_{2s+1})$.
Then we have the projection $\pi\colon(\mathsf\Lambda^2V\otimes W)\otimes(\mathsf\Lambda^sV\otimes S)\to V\otimes S$
given by 
\[\pi((u_1\wedge u_2\otimes w)\otimes(v\otimes\psi))=((u_2\wedge v)u_1-(u_1\wedge v)u_2)\otimes\pi_s(w\otimes\psi),\]
where $\mathsf\Lambda^{s+1}V\cong\mathbb C$ via $e_1\wedge\ldots\wedge e_{s+1}\mapsto1$, and we get $\pi(h_3\otimes h_{2s})=s(e_1\otimes 1)\neq 0$.

We now proceed by induction on $s$.

Assume $s=2$. Then we are left to check the triples $(D_4,D_4,D_1)$, $(D_4,D_4,D_2)$ and $(D_4,D_5,D_3)$.

$(D_4,D_4,D_1)$. We have the projection $\pi\colon (\mathsf\Lambda^2 V\otimes S)\otimes(\mathsf\Lambda^2 V\otimes S)\to V$ given by $\pi(u_1\wedge u_2\otimes \varphi)\otimes(v_1\wedge v_2\otimes\psi))=\pi'(\varphi\otimes\psi)((u_1\wedge u_2\wedge v_1)v_2-(u_1\wedge u_2\wedge v_2)v_1)$, with $\mathsf\Lambda^3 V\cong \mathbb C$ given by the identification $e_1\wedge e_2\wedge e_3=1$, notice that $S$ is selfdual and set $\pi'\colon S\otimes S\to\mathbb C$ given by $\varphi\otimes\psi\mapsto\varphi\wedge\psi$ followed by the identification $\psi_2\wedge\psi_1=1$. We get $\pi(h_4\otimes h_4)=-2e_1\neq0$.

$(D_4,D_4,D_2)$ and $(D_4,D_5,D_3)$. Since $\grs_1,\grs_2 \notin \supp_\grS(D+E-F)$, 
we can consider the intersection $X'$ of the $G$-stable divisors of $X$ 
corresponding to the spherical roots $\grs_1,\grs_2$. Then the sections in $V_D,V_E, s^{D+E-F}V_F$ do not vanish on $X'$, so it is enough to prove that
$s^{D+E-F}V_F\subset m'_{D,E}(V_D\otimes V_E)$ where $m'$ denotes the
multiplication of sections on $X'$. 
Consider in $G$ the Levi subgroup $L$ associated to the roots $\gra_2,\gra'_2$, 
which has semisimple factor of type
$\sfA_{1}\times \sfA_{1}$,  and consider the comodel $L$-variety $Y$ of cotype $\mathsf A_3$. 
The wonderful $G$-variety $X'$ is obtained by parabolic induction from $Y$. Hence our claim follows  
by Lemma~\ref{lem: projnorm parabolic induction}. 

Assume $s>2$. Then we are left to check the triples $(D_{2m+1},D_{2s},D_{2m-1}+D_{2s+1})$ with $1< m< s$, $(D_{2s},D_{2s},D_{2s-3})$, $(D_{2s},D_{2s},D_{2s-2})$, $(D_{2s},D_{2s+1},D_{2s-1})$. Let $(D,E,F)$ be such a triple, then $\grs_1,\grs_2 \notin \supp_\grS(D+E-F)$ and we can consider the intersection $X'$ of the $G$-stable divisors of $X$ corresponding to the spherical roots $\grs_1,\grs_2$. 
Take the Levi subgroup $L$ of $G$ associated to the roots $\gra_2,\dots,\gra_s,\gra'_2,\dots,\gra'_{s}$, 
of semisimple type
$\sfA_{s-1}\times \sfB_{s-1}$,  and consider the comodel $L$-variety $Y$ of type $\mathsf{ab}^\mathsf y (s-1,s-1)$. 
The wonderful $G$-variety $X'$ is obtained by parabolic induction from $Y$. Hence our claim follows  
by Lemma~\ref{lem: projnorm parabolic induction} and the induction hypothesis.
\end{proof}

\section{Normality and semigroups}

Recall that we have fixed a maximal torus $T$ in $K$ and Borel subgroup $B$ of $K$ containing $T$. 
We use $\mathcal X(T)$ for the weight lattice of $T$.
 
Let us denote by $\Gamma(Z)$ the weight semigroup of a $K$-spherical variety $Z$,
$$
	\grG(Z) = \{\grl \in \mathcal X(T) \; : \; \Hom(\mC[Z], V(\grl)) \neq 0 \}.
$$
Let $Ke$ be a spherical nilpotent orbit in $\gop$, and let $\Sigma$ and $\Delta$ be the set of spherical roots and the set of colors of the wonderful compactification of $K/K_{[e]}$.
Let us denote by $D_\gop$ the element of $\mN\Delta$ such that $\gop=V^*_{D_\gop}$. Provided that the multiplication of sections of globally generated line bundles on the wonderful compactification of $K/K_{[e]}$ is surjective, we have that $\ol{Ke}\subset\gop$ is normal if and only if $D_{\gop}$ is minuscule in $\mN\Delta$ with respect to the partial order $\leq_\Sigma$. If moreover $\wt{Ke}$ is the normalization of $\ol{Ke}$, then
$$
	\grG(\wt{Ke}) = \bigcup_{n\in\mN}\{ \omega_E \, : \, E \in \mN\grD, \ E \leq_\grS nD_\gop\},
$$
that is, $\grG(\wt{Ke}) = \gro(\grG_{D_\gop})$ where $\grG_{D_\gop}$ is the subsemigroup of $\mN\Delta$ given by
$$\grG_{D_\gop} = \bigcup_{n\in\mN} \{E \in \mN \grD \; : \; E \leq_\grS nD_\gop\}.$$

In the present section we will study the normality of $\ol{Ke}$ and we will compute the weight semigroups $\grG(\wt{Ke})$ by computing the corresponding semigroups $\grG_{D_\gop}$. In particular we will prove the following theorem.

\begin{theorem}
Let $(\gog, \gok)$ be a classical symmetric pair of non-Hermitian type. Then $\ol{Ke}$ is not normal if and only if $(\gog,\gok) = (\mathfrak{so}(m+n),\mathfrak{so}(m) + \mathfrak{so}(n))$  and the signed partition of $Ke$ is $(+3,+2^{n-1},+1^{m-n-1})$, with $n>1$ odd, or $(-3,+2^{m-1},-1^{n-m-1})$, with $m>1$ odd. 
\end{theorem}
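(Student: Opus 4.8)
The plan is to reduce the whole statement to a minusculity test, using the criterion recalled just above the theorem. By Theorem~\ref{teo: projnorm} the multiplication of sections of globally generated line bundles on the wonderful compactification $X$ of $K/K_{[e]}$ is surjective in every case except the two symplectic orbits of Example~\ref{example:counterexample}, and under this surjectivity $\ol{Ke}$ is normal if and only if $D_\gop$ is minuscule in $\mN\grD$ with respect to $\leq_\grS$. Thus the theorem amounts to deciding, for every spherical nilpotent orbit in the list of Appendix~\ref{A}, whether there exists $F\in\mN\grD$ with $F<_\grS D_\gop$, equivalently whether $D_\gop-\grg\in\mN\grD$ for some nonzero $\grg\in\mN\grS$. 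I would first dispose of the two exceptional symplectic orbits $(+3^4,+1^{2p-8})$ and $(-3^4,-1^{2q-8})$, where the hypothesis of the criterion fails: the \emph{ad hoc} analysis of Section~\ref{ss:symplecticcases} shows that these closures are nonetheless normal, so they do not enter the list of non-normal orbits.

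For every remaining orbit the first step is to pin down $D_\gop\in\mN\grD$. Since $\gop^*=V_{D_\gop}$ is the irreducible $K$-module dual to the isotropy representation, its highest weight $\gro_{D_\gop}$ is read off directly from $(\gog,\gok)$; matching it against the combinatorial rule for the color weights $\gro_D$ identifies $D_\gop$ as a nonnegative combination of colors. With $D_\gop$ in hand, minusculity becomes a finite test against the Cartan pairings tabulated in Appendix~\ref{B}: $D_\gop$ fails to be minuscule precisely when one can subtract from it a covering difference $\grg\in\mN\grS$ and remain in $\mN\grD$. The reductions of Section~2 organize this computation, since parabolic induction and localization at $\supp\grS$ are compatible with $\leq_\grS$ and reduce each spherical system to one of the basic families.

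Running through the families, in the type $\sfA$ cases, the symmetric cases, and the reductive cases the element $D_\gop$ turns out to be minuscule, so the corresponding orbit closures are normal; there the covering differences have positive part of height $2$ and cannot be subtracted from $D_\gop$ while staying in $\mN\grD$. The delicate analysis is concentrated in the orthogonal families of type $\sfB$ and $\sfD$, where $\grS$ contains a long spherical root of the form $2(\grl)$ in type $\sfB$ or $2(\grl)+\grl'+\grl''$ in type $\sfD$; such a root produces a covering difference carrying a coefficient $2$, of the shape $-2D+2D'$, which can be subtracted from $D_\gop$ only when the coefficient of the relevant color $D'$ in $D_\gop$ is at least $2$. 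It is exactly these roots that can sit strictly below $D_\gop$, and the outcome is recorded in Tables~2--11.

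The heart of the argument is therefore the two orthogonal families of the statement. For $(\gog,\gok)=(\mathfrak{so}(m+n),\mathfrak{so}(m)+\mathfrak{so}(n))$ with signed partition $(+3,+2^{n-1},+1^{m-n-1})$, and symmetrically $(-3,+2^{m-1},-1^{n-m-1})$, I would exhibit explicitly a nonzero $\grg\in\mN\grS$ with $D_\gop-\grg\in\mN\grD$, thereby certifying that $D_\gop$ is not minuscule and, by the criterion, that $\ol{Ke}$ is not normal. The parity hypothesis ($n$, respectively $m$, odd) is exactly what forces the coefficient of $D_\gop$ on the color entering this covering difference to be large enough for the subtraction to land back in $\mN\grD$; for the neighbouring orbits (even parity, or the other partitions of the orthogonal and symplectic pairs) I would check that no such $\grg$ exists and hence that $D_\gop$ remains minuscule. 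Isolating this parity dependence, and verifying that it arises for no other pair or orbit in the classification, is the main obstacle.
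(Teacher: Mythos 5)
Your overall framework is the one the paper uses: reduce normality to the minusculity of $D_\gop$ in $\mN\grD$ via the surjectivity of multiplication (Theorem~\ref{teo: projnorm}), treat the two symplectic orbits $(+3^4,+1^{2p-8})$, $(-3^4,-1^{2q-8})$ separately by the \emph{ad hoc} argument of Section~\ref{ss:symplecticcases}, and then run a case-by-case minusculity check organized by the reductions of Section~2. Up to that point your proposal matches the paper.

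The step that would fail is the mechanism you propose for detecting non-minusculity, which is the heart of the matter. You look for covering differences of the shape $-2D+2D'$ coming from the doubled spherical roots, and observe that such a $\grg$ can be subtracted from $D_\gop$ only if the coefficient of $D'$ in $D_\gop$ is at least $2$. But in every orthogonal case at issue one has $D_\gop = D_2$ (a single color, with coefficient $1$), or $D_2+D_5$ in the low-rank cases, so your test never fires and you would wrongly conclude that all these closures are normal. The covering difference actually responsible has a different shape: in the collapsed-tail type $\sfB$ system $\mathsf{ab^y}(s,s)$ (cases \ref{sss:BDbay} with $r=p$, \ref{sss:BBaby} with $r=q$, \ref{sss:BBbay} with $r=p$) one has $\grg = \sum_{i=1}^{s}\grs_{2i} = -D_1+D_2$, whose positive part has height $1$, so that $D_1 = D_2 - \grg \in \mN\grD$ and $D_2$ is not minuscule. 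In the uncollapsed tail cases the analogous relation is $D_1 = D_2 - (\grs_2+\cdots+\grs_{2r}+\tfrac12\grs_{2r+1})$, and in the collapsed type $\sfD$ cases it is $D_1 = D_2 - (\grs_2+\cdots+\grs_{2r-2}+\tfrac12(\grs_{2r}+\grs_{2r+1}))$; both involve half-integral coefficients, hence do not lie in $\mN\grS$, and $D_2$ stays minuscule. The parity hypothesis on $n$ (resp.\ $m$) enters exactly here: it forces the tail on the odd orthogonal factor to collapse, i.e.\ it selects the spherical system $\mathsf{ab^y}(s,s)$ rather than $\mathsf{a^y}(s,s)+\mathsf b'(t)$ or a type $\sfD$ collapse. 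With this correction the rest of your plan goes through as in the paper.
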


In Appendix \ref{A} these are the cases \ref{sss:BDbay}, with $r = p$, \ref{sss:BBaby}, with $r = q$, and \ref{sss:BBbay}, with $r = p$.

The normality or non-normality of $\ol{Ke}$, as well as the generators of the weight semigroup $\grG(\wt{Ke})$ are given in  Tables 2--11, in Appendix~\ref{B}. 

In the tables we also provide the codimension of  $\overline{Ke} \smallsetminus Ke$ in $\overline{Ke}$. Notice that, if $\overline{Ke}$ is normal and the codimension of $\overline{Ke} \smallsetminus Ke$ in $\overline{Ke}$ is greater than 1, then $\mathbb C[\overline{Ke}] = \mathbb C[Ke]$, so that the weight semigroup of $Ke$ actually coincides with $\grG(\wt{Ke})$. 

 Notice also that in all cases where $\ol{Ke}$ is not normal, the normalization $\wt{Ke} \lra \ol{Ke}$ is not even bijective (see \cite{Ga} for a general procedure to compute the $K$-orbits in $\wt{Ke}$ and in $\ol{Ke}$).

\begin{remark}
The normality of $\overline{Ge}$ is well known and may be deduced from \cite{KP2} (see also  \cite{Pa2}, when $Ge$ is spherical under the action of $G$). In particular, if $(\gog, \gok)$ is a classical symmetric pair of non-Hermitian type, then $\overline{Ge}$ is normal in all but the following cases:
\begin{itemize}
	\item[-] $\gog = \mathfrak{sp}(2n)$ with $n > 3$ and the partition of $Ge$ is $(3^2,1^{2n-6})$ (Cases \ref{sss:CCaac} and \ref{sss:CCaac2}, with $p+q > 3$),
           \item[-] $\gog = \mathfrak{sp}(2n)$ with $n > 6$ and the partition of $Ge$ is $(3^4,1^{2n-12})$ (Cases \ref{sss:CCabx} and \ref{sss:CCbax}, with $p+q>6$),
	\item[-] $\gog = \mathfrak{so}(2n+1)$ and the partition of $Ge$ is $(3,2^{n-1})$ (Case \ref{sss:BDbay}, with $r = p = q-1$).	
\end{itemize}
\end{remark}

We now report the details of the computation of the semigroup $\grG_{D_\gop}$. We omit the cases where $K/K_{[e]}$ is a flag variety (Cases \ref{sss:trvial1} and \ref{sss:trvial2} in Appendix \ref{A}) or a parabolic induction of a symmetric variety (Cases \ref{sss:symm1}, \ref{sss:symm2}, \ref{sss:Dao}, \ref{sss:symm4.1}, \ref{sss:BDaa}, \ref{sss:symm8}, \ref{sss:DDaa}, as well as Cases \ref{sss:BDady}, \ref{sss:BDbay}, \ref{sss:BBaby}, \ref{sss:BBbay}, \ref{sss:DDady}, \ref{sss:DDday} when $r=0$): in these cases the combinatorics of spherical systems is easier. By \cite{Ki06}, the normality of $\overline{Ke}$ was already known in all these cases (see the discussion at the beginning of Appendix \ref{B}), and the corresponding weight semigroups $\grG(\ol{Ke})$ were obtained in \cite{Bi} by using different techniques.

\subsection{Symplectic cases}\label{ss:symplecticcases}

\paragraph{\textbf{Cases \ref{sss:CCaac} $(q>1)$ and \ref{sss:CCaac2} $(p>1)$.}}
Let us deal with the case \ref{sss:CCaac} $(q>1)$, the other one is similar.
We have two spherical roots $\sigma_1=\alpha_1+\alpha'_1$ and $\sigma_2=\alpha'_1+2(\alpha'_2+\ldots+\alpha'_{q-1})+\alpha'_q$,
and three colors $D_1=D_{\alpha_1}$, $D_2=D_{\alpha'_2}$ and $D_3=D_{\alpha_2}$.

We have $D_\gop=D_1$, which is minuscule in $\mN \grD$, therefore $\ol{Ke}$ is normal. Furthermore $D_2+D_3=2D_1-\sigma_1$ and $D_3=2D_1-\sigma_1-\sigma_2$,
therefore $D_1, D_2+D_3, D_3\in\Gamma_{D_1}$.

Let us set $\grs = \sum a_i \grs_i\in\mN\Sigma$ and $nD_1 - \grs = \sum c_i D_i\in\mN\Delta$, we have
$$
	nD_1 - \grs = (n-2a_1) D_1 + (a_1-a_2)D_2 + a_1 D_3,
$$
and therefore $c_3-c_2 = a_2$. It follows that
$$
	\grG_{D_1} = \langle D_1, D_2+D_3, D_3\rangle_\mN.
$$

\paragraph{\textbf{Cases \ref{sss:CCacy} $(q>2)$ and \ref{sss:CCcay} $(p>2)$.}}
Let us deal with the case \ref{sss:CCacy} ($q>2$), the other one is similar. 
Let us keep the notation of Sections~\ref{sss:typeC} and~\ref{ss:ay22ct}, therefore $D_1 = D_{\gra_2}^-$, $D_2 = D_{\gra_2}^+$, $D_3 = D_{\gra_1}^-$, $D_4 = D_{\gra_1}^+$, $D_5 = D_{\gra_1'}^-$, $D_6 = D_{\gra'_3}$ and $D_7 = D_{\gra_3}$.

Then we have $D_\gop = D_4$, which is minuscule in $\mN \grD$, therefore $\ol{Ke}$ is normal. Moreover $D_2 = 2D_4 - \grs_3 - \grs_4$, $D_1 = D_2 - \grs_2 - \grs_4 - \grs_5$, therefore $D_1, D_2, D_4 \in \grG_{D_4}$. Moreover $D_3 + D_7 = D_1 + D_2 - \grs_1$, $D_5 + D_7 = D_2 + D_4 - \grs_1 - \grs_2 - \grs_3 - \grs_4$, $D_6 + D_7 = D_2 + D_4 - \grs_1 - \grs_2 - \grs_3 - \grs_4 - \grs_5$, therefore  $D_3 + D_7, D_5 + D_7, D_6 + D_7 \in \grG_{D_4}$ as well.

Let us set $\grs = \sum a_i \grs_i\in\mN\Sigma$ and $nD_4 - \grs = \sum c_i D_i\in\mN\Delta$, we have
\begin{align*}
	nD_4 - \grs = \, &(-a_1+a_2) D_1 + (-a_1-a_2+a_3)D_2 + (a_1-a_2-a_3+a_4)D_3+ \\
	& +(n+a_2-a_3-a_4)D_4 + (a_3-a_4+a_5)D_5 +(a_2-a_5) D_6+ a_1 D_7,
\end{align*}
and therefore $c_3+c_5 + c_6 = c_7$. It follows that
$$
	\grG_{D_4} = \langle D_1, D_2, D_4, D_3+D_7, D_5+D_7, D_6+D_7 \rangle_\mN.
$$

\paragraph{\textbf{Cases \ref{sss:CCacy} $(q=2)$ and \ref{sss:CCcay} $(p=2)$.}}
Let us deal with the case \ref{sss:CCacy} ($q=2$), the other one is similar. Label the spherical roots $\grs_1 = \gra_2$, $\grs_2 = \gra_2'$, $\grs_3 = \gra_1$, $\grs_4 = \gra_1'$, and label the colors $D_1 = D_{\gra_2}^-$, $D_2 = D_{\gra_2}^+$, $D_3 = D_{\gra_1}^-$, $D_4 = D_{\gra_1}^+$, $D_5 = D_{\gra_1'}^-$, $D_6 = D_{\gra_3}$.

Then we have $D_\gop = D_4$, which is minuscule in $\mN \grD$, therefore $\ol{Ke}$ is normal. Moreover $D_2 = 2D_4 - \grs_3 - \grs_4$, $D_1 = D_2 - \grs_2 - \grs_4$, therefore $D_1, D_2, D_4 \in \grG_{D_4}$. Similarly $D_3 + D_6 = D_1+ D_2 - \grs_1$ and $D_5 + D_6 = D_2 + D_4 - \grs_1 - \grs_2 - \grs_3 - \grs_4$, therefore  $D_3 + D_6, D_5 + D_6 \in \grG_{D_4}$ as well.

Let us set $\grs = \sum a_i \grs_i\in\mN\Sigma$ and $nD_4 - \grs = \sum c_i D_i\in\mN\Delta$, we have
\begin{align*}
	nD_4 - \grs = (-a_1+a_2) D_1 &+ (-a_1-a_2+a_3)D_2 + (a_1-a_2-a_3+a_4)D_3+ \\
	&+(n+a_2-a_3-a_4)D_4 + (a_2+a_3-a_4)D_5 +a_1 D_6,
\end{align*}
and therefore $c_3+c_5 = c_6$. It follows that 
$$
	\grG_{D_4} = \langle D_1, D_2, D_4, D_3+D_6, D_5+D_6 \rangle_\mN.
$$

\paragraph{\textbf{Cases \ref{sss:CCabx} and \ref{sss:CCbax}.}}

Let us deal with the case \ref{sss:CCabx}, the other one is similar. In this case $D_\gop$ is minuscule in $\mN\grD$. Following Example~\ref{example:counterexample}, this does not imply that the ring $\bigoplus_{n \in \mN} \grG(X,\calL_{nD_\gop})$ is generated by its degree one component $V_{D_\gop} = \grG(X,\calL_{D_\gop})$, indeed the multiplication of sections of globally generated line bundles on $X$ is not necessarily surjective. However using our methods we are still able to compute the normality and the weight semigroups of $\overline{Ke}$.

Enumerate the spherical roots and the colors of $X$ as in Example~\ref{example:counterexample}. Then $D_\gop = D_4$, and by definition
$$
	\grG(\overline{Ke}) = \bigcup_{n\in\mN}\{ \omega_E \, : \, E \in \mN\grD, \ V_E \subset V_{D_4}^n\}.
$$



\begin{lemma} The following inclusions hold:
\begin{itemize}
	\item[(1)] $V_{D_1} \subset V_{D_4}^2$ (where $D_1 = 2D_4 - \grs_2 - \grs_3 - 2\grs_4$),
	\item[(2)] $V_{D_2} \subset V_{D_4}^2$ (where $D_2 = 2D_4 - \grs_3 - \grs_4$),
	\item[(3)] $V_{D_3} \subset V_{D_1} V_{D_2}$ (where $D_3 = D_1+ D_2 - \grs_1$),
	\item[(4)] $V_{D_5} \subset V_{D_1} V_{D_4}$ (where $D_5 = D_1+D_4 - \grs_1 - \grs_3$),
	\item[(5)] $V_{D_6} \subset V_{D_1}^2$ (where $D_6 = 2D_1 - 2\grs_1 - \grs_3 -\grs_5$).
\end{itemize}
\end{lemma}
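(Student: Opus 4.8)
Each of the five inclusions has the shape $V_F\subset V_DV_E$ with $F=D+E-\grg$ and $\grg\in\mN\grS$ the combination recorded in parentheses, so the plan is to first confirm these identities directly from the Cartan pairing of Example~\ref{example:counterexample}: for instance $2\grs_1+\grs_3+\grs_5=2D_1-D_6$ gives (5), while $\grs_3+\grs_4=-D_2+2D_4$ gives (2), and similarly for the rest. Once $F\leq_\grS D+E$ is in place, Lemma~\ref{lemma: supporto moltiplicazione} reduces each inclusion to a single statement: the image of $h_D\otimes h_E\in V^*_D\otimes V^*_E$ under the $K$-equivariant projection onto the isotypic component of type $V^*_F=V(\gro^*_F)$ must be non-zero. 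Thus the whole lemma becomes five explicit non-vanishing checks.

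To carry out these checks I would set up coordinate models for the modules $V^*_{D_i}$ and the distinguished vectors $h_i$, as in Sections~\ref{ss:ay22ct}--\ref{ss:abyss}, but now for $K=\mathrm{Sp}(U)\times\mathrm{Sp}(U')$ with $U,U'$ the two defining symplectic modules, since here both factors are symplectic (in contrast to the $\mathrm{SL}\times\mathrm{Sp}$ models used for the reduced families). Reading the weights off the Luna diagram identifies the relevant modules, with symplectic self-duality giving $V^*_{D_i}\cong V_{D_i}$: one finds $V_{D_4}=U\otimes U'$ with $h_4=e$ the fixed orbit representative, $V_{D_1}=\mathsf\Lambda^2_0U$, $V_{D_2}=\mathsf\Lambda^2_0U\otimes\mathsf\Lambda^2_0U'$, $V_{D_5}=\mathsf\Lambda^3_0U\otimes U'$ and $V_{D_6}=\mathsf\Lambda^4_0U$, together with $V_{D_3}=V(\gro_1+\gro_3)\otimes\mathsf\Lambda^2_0U'$. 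Each $h_i$ is then pinned down up to scalar by the semiinvariance $g\cdot h_i=\xi_{D_i}(g)h_i$ for $g\in H$; working this out for the larger modules $\mathsf\Lambda^3_0U$, $\mathsf\Lambda^4_0U$ and $V(\gro_1+\gro_3)$ is the book-keeping part of the argument.

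With the $h_i$ in hand, the projections for (1)--(4) are realized by explicit contractions built from the two symplectic forms together with wedge and trace-free projections, in the spirit of Section~\ref{ss:ay22ct}; in each of these cases $\grg$ avoids the spherical root $\grs_5$, so they amount to short direct evaluations. Concretely, for (1) I would contract the two $U'$-slots of $e\otimes e$ against the symplectic form of $U'$ and project the resulting element of $\mathsf\Lambda^2U$ onto $\mathsf\Lambda^2_0U$, which produces a non-zero multiple of $h_1$; this identifies $h_1$ as (the trace-free part of) the $2$-form $\sum_{i,j}\grf(u'_i,u'_j)\,u_i\wedge u_j$ obtained from $e=\sum_i u_i\otimes u'_i$, and the same formula feeds directly into (5).

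The hard part will be inclusion (5), the one whose $\grg$ involves $\grs_5$: this is exactly the direction in which multiplication of sections fails, since Example~\ref{example:counterexample} exhibits the low triple $(D_3,D_3,D_1+D_2+D_6)$ with $V_{D_1+D_2+D_6}\not\subset V_{D_3}^2$. Consequently no general positivity statement is available and the non-vanishing has to be verified by hand. In coordinates, (5) asks that $h_1\otimes h_1$ project non-trivially onto $\mathsf\Lambda^4_0U\subset(\mathsf\Lambda^2_0U)^{\otimes2}$, that is, that the projection of $h_1\wedge h_1$ to $\mathsf\Lambda^4_0U$ be non-zero, equivalently that the $2$-form $h_1$ be non-degenerate of rank at least $4$. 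This is where the shape $(+3^4,\ldots)$ of the orbit enters: the four size-$3$ blocks force $e$, and hence the derived form $h_1$, to have rank exactly $4$, so that $h_1\wedge h_1\neq0$ and (5) holds. Establishing this last non-vanishing explicitly, and thereby confirming that the chosen route through $D_1$ survives precisely where the route through $D_3$ does not, is the crux of the proof.
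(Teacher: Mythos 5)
Your proposal is correct and would go through: the identities you read off the Cartan pairing of Example~\ref{example:counterexample} are right, the identifications of the modules $V_{D_i}^*$ from the weights $\gro_{D_i}$ are right, and for (5) the contraction of the $U'$-slots of $e\otimes e$ does give $h_1$ proportional to $e_1\wedge e_4+e_2\wedge e_3$, so $h_1\wedge h_1$ is a non-zero multiple of $e_1\wedge e_2\wedge e_3\wedge e_4$, which moreover is already primitive. However, your route is genuinely different from the paper's, which does no explicit computation in this lemma. For (1)--(4) the paper restricts to the $G$-stable divisor $X'$ of $X$ corresponding to $\grs_5$: this is a parabolic induction of $\mathsf{ab^y}(2,2)$, for which surjectivity of multiplication is already proved (Proposition~\ref{prop:projnorm-ab^y}); since the four differences $D+E-F$ avoid $\grs_5$, the corresponding triples restrict to low triples on $X'$ (Lemma~\ref{lemma:lowtriples-ab^y}), and the canonical identification $V_D\cong V_{D'}$ together with the surjectivity of restriction of sections yields the inclusions. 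For (5) the paper quotients by the distinguished set $\{D_2,D_3,D_4,D_5\}$, landing on a rank-one wonderful variety with spherical root $2\grs_1+\grs_3+\grs_5$ and colors $\{D_1,D_6\}$, a parabolic induction of a rank-one symmetric variety where surjectivity is classical; the triple $(D_1,D_1,D_6)$ is low there, which gives $V_{D_6}\subset V_{D_1}^2$. Your approach buys self-containedness and makes transparent why the shape $(+3^4,\ldots)$ forces $h_1\wedge h_1\neq0$ exactly where the route through $D_3$ fails; it costs the explicit determination of $h_2,h_3,h_5$ and a concrete realization of the projection onto $V(\gro_1+\gro_3)\otimes\mathsf\Lambda^2_0U'$ for (3), which is more than a short evaluation. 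One small imprecision to fix: in (5) you need the projection of $h_1\wedge h_1$ onto the primitive part $\mathsf\Lambda^4_0U$ to be non-zero, which is stronger than $h_1\wedge h_1\neq0$ in general; it holds here only because $h_1\wedge h_1$ is supported on positive indices and hence lies in the kernel of the contraction with $\omega$.
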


\begin{proof}
Consider the $G$-stable divisor $X' \subset X$ corresponding to $\grs_5$: it is a parabolic induction of a wonderful variety of type $\mathsf{ab^y}(2,2)$. By Proposition~\ref{prop:projnorm-ab^y} together with Lemma~\ref{lem: projnorm parabolic induction} it follows that the multiplication of sections is surjective for all pairs of globally generated line bundles on $X'$. Denote by $\rho' : \Pic(X) \lra \Pic(X')$ the restriction of line bundles and for $D \in \mN\grD$ set $D' = \rho'(D)$. By Lemma \ref{lemma:lowtriples-ab^y} $(D'_4, D'_4, D'_1)$ and $(D'_4, D'_4, D'_2)$ are low triples for $X'$, and since $D'_5 \leq D'_1 + D'_4$ and $D'_3 \leq D'_1 + D'_2$ are coverings in $\Pic(X')$ it follows that $(D'_1, D'_4, D'_5)$ and $(D'_1, D'_2, D'_3)$ are low triples for $X'$ as well. On the other hand, for all $D \in \mN\grD$ the $G$-modules $V_D$ and $V_{D'}$ are canonically identified, and since the restriction $\grG(X, \calL_D) \lra \grG(X', \calL_{D'})$ is surjective we get the inclusions (1), (2), (3), (4).

We are left with the inclusion (5). Consider the distinguished subset of colors $\grD_0 = \{D_2, D_3, D_4, D_5\}$ and denote by $Y$ the quotient of $X$ by $\grD_0$. Then $Y$ is a rank 1 wonderful variety with spherical root $2\grs_1 + \grs_3 + \grs_5$ whose set of colors is identified with $\{D_1, D_6\}$. By Lemma \ref{lem: projnorm quotients} we have that $\grG(X,\calL_{nD_1}) = \grG(Y,\calL_{nD_1})$ for all $n \in \mN$. Since $D_6 \leq 2D_1$ is a covering in $\Pic(Y)$, the triple $(D_1, D_1, D_6)$ is low in $\Pic(Y)$. On the other hand $Y$ is a parabolic induction of a rank one symmetric variety, and for such a variety the multiplication of sections of globally generated line bundles is known to be always surjective. By Lemma~\ref{lem: projnorm parabolic induction} the same holds for $Y$, and since it corresponds to a low triple we get the inclusion $V_{D_6} \subset V_{D_1}^2$.
\end{proof}

\begin{proposition}
$\ol{Ke}$ is normal, and $\grG(\ol{Ke})$ is generated by the weights
$$\gro_2, \ \gro_4, \ \gro_1 + \gro_1', \ \gro_2 + \gro_2', \ \gro_1 + \gro_3 + \gro_2', \ \gro_3 + \gro_1'.$$
\end{proposition}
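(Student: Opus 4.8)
The plan is to read off $\grG(\ol{Ke})$ directly from the formula $\grG(\ol{Ke})=\bigcup_{n}\{\gro_E:E\in\mN\grD,\ V_E\subset V_{D_4}^n\}$ recalled above, by proving that a copy of $V(\gro_E)$ sits inside $\mC[\ol{Ke}]$ for \emph{every} $E\in\mN\grD$; this simultaneously yields the generators of $\grG(\ol{Ke})$ and the normality of $\ol{Ke}$. The arithmetical input is the computation of $\grG_{D_4}$: writing $\grs=\sum a_i\grs_i$ and $nD_4-\grs=\sum c_jD_j$ and inverting the Cartan pairing of Example~\ref{example:counterexample}, one finds $a_1=c_3+c_5+2c_6$, $a_2=c_1+a_1$, $a_5=c_6$ and analogous nonnegative expressions for $a_3,a_4$, together with $n=2c_1+2c_2+4c_3+c_4+3c_5+4c_6$. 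Hence every $E=\sum c_jD_j\in\mN\grD$ satisfies $E\leq_\grS n(E)D_4$ with $n(E)=\sum_j n_jc_j$, where $n_1=n_2=2$, $n_3=n_6=4$, $n_4=1$, $n_5=3$; in particular $\grG_{D_4}=\mN\grD$.

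The geometric input is the preceding Lemma. I would use its five inclusions, together with the trivial $V_{D_4}\subset V_{D_4}$, to upgrade the inequalities $D_j\leq_\grS n_jD_4$ to module inclusions $V_{D_j}\subset V_{D_4}^{n_j}$ for all six colors: indeed (1) and (2) give $V_{D_1},V_{D_2}\subset V_{D_4}^2$, and composing with (3), (4), (5) gives $V_{D_3}\subset V_{D_1}V_{D_2}\subset V_{D_4}^4$, $V_{D_5}\subset V_{D_1}V_{D_4}\subset V_{D_4}^3$ and $V_{D_6}\subset V_{D_1}^2\subset V_{D_4}^4$. The exponents obtained this way coincide with the degrees $n_j$ read off from the semigroup computation.

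Next I would pass from the colors to an arbitrary $E=\sum c_jD_j$. For each $j$ let $t_j\in\mC[\wt{Ke}]$ be a highest weight vector of the copy of $V_{D_j}$ lying in $V_{D_4}^{n_j}$. Since $\mC[\wt{Ke}]$ is an integral domain, the product $\prod_j t_j^{c_j}$ is a nonzero highest weight vector of weight $\gro_E=\sum_j c_j\gro_{D_j}$, hence it generates a copy of $V(\gro_E)$ contained in $\prod_j V_{D_j}^{c_j}\subset V_{D_4}^{n(E)}$, and thus in $\mC[\ol{Ke}]$. Therefore $\grG(\ol{Ke})\supseteq\gro(\mN\grD)$; as the reverse inclusion is immediate (the weights occurring in any $V_{D_4}^n\subset\grG(X,\calL_{nD_4})$ lie in $\gro(\grG_{D_4})=\gro(\mN\grD)$), we get $\grG(\ol{Ke})=\gro(\mN\grD)=\langle\gro_{D_1},\ldots,\gro_{D_6}\rangle_\mN$. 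By the combinatorial description of $\gro_D$ recalled above, the six weights $\gro_{D_1},\ldots,\gro_{D_6}$ are precisely $\gro_2,\gro_4,\gro_1+\gro_1',\gro_2+\gro_2',\gro_1+\gro_3+\gro_2',\gro_3+\gro_1'$.

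Finally I would deduce normality formally: one has $\mC[\ol{Ke}]\subset\mC[\wt{Ke}]=\bigoplus_n\grG(X,\calL_{nD_4})$, and the latter ring is multiplicity free because $X$ is spherical; since the previous step exhibits inside $\mC[\ol{Ke}]$ a copy of every $K$-type occurring in $\mC[\wt{Ke}]$, the two rings must coincide and $\ol{Ke}$ is normal. I expect the whole difficulty to be concentrated in the preceding Lemma: because the multiplication of sections on $X$ is not surjective (Example~\ref{example:counterexample}), the inclusions $V_{D_j}\subset V_{D_4}^{n_j}$ cannot be obtained formally and must be produced geometrically, by restricting to the $G$-stable divisor of type $\mathsf{ab^y}(2,2)$ and to the rank one quotient with spherical root $2\grs_1+\grs_3+\grs_5$; once these are in hand, the reduction from arbitrary $E$ to the colors via products of highest weight vectors, together with the multiplicity-freeness of $\mC[\wt{Ke}]$, makes the rest routine.
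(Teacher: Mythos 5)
Your proposal is correct and follows essentially the same route as the paper: all the substantive content is in the preceding Lemma, and the remaining step is the formal observation that $\gro(D)\in\grG(\ol{Ke})$ for every color $D$ forces $\grG(\ol{Ke})=\gro(\mN\grD)$ (your verification that $D_1\leq_\grS 2D_4$, $D_2\leq_\grS 2D_4$, $D_3\leq_\grS 4D_4$, $D_5\leq_\grS 3D_4$, $D_6\leq_\grS 4D_4$, hence $\grG_{D_4}=\mN\grD$, checks out against the Cartan pairing). The only divergence is the last step: the paper deduces normality by noting that $\gro(D_1),\ldots,\gro(D_6)$ are linearly independent, so $\gro(\mN\grD)$ is a saturated weight semigroup and $\mC[\ol{Ke}]$ is integrally closed, whereas you identify $\mC[\ol{Ke}]$ with $\mC[\wt{Ke}]$ by comparing $K$-types and invoking multiplicity-freeness; the two arguments are equivalent (your containment $\grG(\wt{Ke})\subseteq\gro(\grG_{D_4})$ is itself justified by the same saturation bound on integral closures), and both are fine.
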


\begin{proof}
Clearly, $\grG(\overline{Ke}) \subset \omega(\mN\grD)$. On the other hand by the previous lemma we have that $\gro(D) \in \grG(\overline{Ke})$ for all $D \in \grD$, therefore $\grG(\overline{Ke}) = \omega(\mN\grD)$ and the description of the generators follows by the description of the map $\gro$.

Notice that the weights $\gro(D_1), \ldots, \gro(D_6)$ are linearly independent. Therefore $\grG(\overline{Ke})$ is a saturated semigroup of weights (that is, if $\grG_\mZ \subset \calX(T)$ is the sublattice generated by $\grG(\overline{Ke})$ and if $\grG_{\mQ^+} \subset \calX(T) \otimes \mQ$ is the cone generated by $\grG(\overline{Ke})$, then $\grG(\overline{Ke}) = \grG_\mZ \cap \grG_{\mQ^+}$). It follows that $\ol{Ke}$ is normal. 
\end{proof}

\subsection{Orthogonal cases}

\subsubsection{Tail cases}

\paragraph{\textbf{Cases without Roman numerals.}} Let us deal with the case \ref{sss:BDady} ($r<q-1$). The cases \ref{sss:BDbay} ($r<\min\{p,q-1\}$), \ref{sss:BBaby} $(r<q)$, \ref{sss:BBbay} $(r<p)$, \ref{sss:DDady} ($r<\min\{p-1,q-1\}$) and \ref{sss:DDday} ($r<\min\{p-1,q-1\}$) are similar. Suppose $r>0$, otherwise we have a symmetric case. Let us keep the notation of Section~\ref{sss:typeD}, therefore, for all $i=1,\ldots,r$, $D_{2i-1} = D_{\gra_i}^-$ and $D_{2i} = D_{\gra_i}^+$, furthermore $D_{2r+1} = D_{\gra'_r}^-$, $D_{2r+2} = D_{\gra'_{r+1}}$ and $D_{2r+3} = D_{\gra_{r+1}}$. To have a uniform notation, we denote
$$
\tilde D_{2r+3} = \left\{
	\begin{array}{ll}
		D_{2r+3} & \text{if } r<p-1  \\
		2D_{2r+3} & \text{if } r=p-1 
	\end{array} \right..
$$
We have $D_\gop = D_2$, which is minuscule in $\mN \grD$, therefore $\ol{Ke}$ is normal. 

Set $\tilde D_1 = D_1$, $\tilde D_2 = D_2$ and, for all $k = 3, \ldots, 2r+2$,
$$
	\tilde D_k = D_2 + D_{k-2} - (\grs_1 + \ldots + \grs_{k-2}).
$$
Notice that
$$
\tilde D_k = \left\{ \begin{array}{ll}
		D_k  & \text{if } k \leq 2r \\
		D_k + \tilde D_{2r+3} & \text{if } k = 2r+1, 2r+2
\end{array} \right..
$$

\begin{proposition}
The semigroup $\grG_{D_2}$ is generated by $\tilde D_{2i}$ and $\tilde D_{2i-1}+\tilde D_{2j-1}$ for all $i,j=1, \ldots, r+1$.
\end{proposition}

\begin{proof}
Since $D_2 \in \grG_{D_2}$, by induction on the even indices, 
it follows that $\tilde D_{2i} \in \grG_{D_2}$ for all $i \leq r+1$. 
On the other hand
$$
	D_1 = D_2 - (\grs_2 + \grs_4 + \ldots + \grs_{2r-2} + \grs_{2r} + \frac{\grs_{2r+1}}{2}).
$$
Therefore, for the odd indices, we get $\tilde D_{2i-1} + \tilde D_{2j-1} \in \grG_{D_2}$ for all $i,j \leq r+1$.

Let $\grs \in \mN \grS$ and suppose that $nD_2 - \grs \in \mN \grD$, let $\grs = \sum a_i \grs_i$ and $nD_2 - \grs = \sum c_i D_i$. Notice that if $r=p-1$ then $c_{2r+3}$ is even. Therefore $nD_2 - \grs \in \langle D_1, \ldots, D_{2r+2}, \tilde D_{2r+3}\rangle_\mN$, and write
$$nD_2 - \grs = b_1 D_1 + \ldots + b_{2r+2} D_{2r+2} + b_{2r+3} \tilde D_{2r+3}.$$

Expressing the coefficients $b_1, \ldots, b_{2r+3}$ with respect to $a_1, \ldots, a_{2r+1}$ we get that $\sum_{i=1}^{r+1} b_{2i-1} = 2 a_{2r+1}$, and $b_{2r+1} + b_{2r+2} = b_{2r+3}$. The claim follows.
\end{proof}

\paragraph{\textbf{Cases with Roman numerals.}} Let us deal with the case \ref{sss:DDady} ($r=p-1<q-1$), the case (I). The case (II), as well as the cases \ref{sss:BDbay} ($r=q-1<p$) and \ref{sss:DDday} ($r=q-1<p-1$) are similar. First, let us suppose $r>1$. Let us keep the notation of Section~\ref{sss:typeD}, therefore, for all $i=1,\ldots,r$, $D_{2i-1} = D_{\gra_i}^-$ and $D_{2i} = D_{\gra_i}^+$, furthermore $D_{2r+1} = D_{\gra'_r}^-$, $D_{2r+2} = D_{\gra'_{r+1}}$ and $D_{2r+3} = D_{\gra_{p}}$. 

We have $D_\gop = D_2$, which is minuscule in $\mN \grD$, therefore $\ol{Ke}$ is normal. 

Set $\tilde D_1 = D_1$ and $\tilde D_2 = D_2$, and define inductively, for all $k = 3, \ldots, 2r+2$,
$$
	\tilde D_k = D_2 + \tilde D_{k-2} - (\grs_1 + \ldots + \grs_{k-2}).
$$
Notice that
$$
\tilde D_k = \left\{ \begin{array}{ll}
		D_k  & \text{if } k \leq 2r-2 \\
		D_k + D_{2r+3} & \text{if } k= 2r-1, 2r \\
		D_k + 2D_{2r+3} & \text{if } k= 2r+1, 2r+2
\end{array} \right..
$$

\begin{proposition}
The semigroup $\grG_{D_2}$ is generated by $\tilde D_{2i}$ and $\tilde D_{2i-1}+\tilde D_{2j-1}$ for all $i,j=1, \ldots, r+1$.
\end{proposition}

\begin{proof}
Since $D_2 \in \grG_{D_2}$, for the even indices, 
it follows that $\tilde D_{2i} \in \grG_{D_2}$ for all $i \leq r+1$. 
On the other hand
$$
	D_1 = D_2 - (\grs_2 + \grs_4 + \ldots + \grs_{2r-2} + \grs_{2r} + \frac{\grs_{2r+1}}{2}).
$$
Therefore, for the odd indices, we get $\tilde D_{2i-1} + \tilde D_{2j-1} \in \grG_{D_2}$ for all $i,j \leq r+1$.

Let $\grs \in \mN \grS$ and suppose that $nD_2 - \grs \in \mN \grD$, denote $\grs = \sum a_i \grs_i$ and $nD_2 - \grs = \sum c_i D_i$. Expressing the coefficients $c_1, \ldots, c_{2r+3}$ with respect to $a_1, \ldots, a_{2r+1}$ we get that $\sum_{i=1}^{r+1} c_{2i-1} = 2 a_{2r+1}$, and $c_{2r-1}+c_{2r}+2c_{2r+1} + 2c_{2r+2} = c_{2r+3}$. The claim follows.
\end{proof}

We are left with the case $r=1$, the case $r=0$ being symmetric. Here we have $D_\gop=D_2+D_5$, which is minuscule in $\mN \grD$, therefore $\ol{Ke}$ is normal. Proceeding as above we get the same semigroup
$$
\Gamma_{D_2+D_5}=\langle D_2+D_5, D_4+2D_5, 2D_1+2D_5, D_1+D_3+3D_5, 2D_3+4D_5\rangle_\mN.  
$$

\subsubsection{Collapsed tails of type $\mathsf B$}

\paragraph{\textbf{Cases without Roman numerals.}}
Let us deal with the case \ref{sss:BBaby} ($r=q$). 
The cases \ref{sss:BDbay} ($r=p<q-1$) and \ref{sss:BBbay} ($r=p$) are similar. 
Let us keep the notation of Sections~\ref{sss:typeB} and~\ref{ss:abyss}, 
therefore $D_{2i-1} = D_{\gra_i}^-$ and $D_{2i} = D_{\gra_i}^+$ for all $i = 1, \ldots, r$,
$D_{2r+1}=D_{\gra'_q}^-$ and $D_{2r+2}=D_{\gra_{r+1}}$.

We have $D_\gop = D_2$. Notice that $D_2$ is not minuscule, indeed $D_1 = D_2 - \sum_{i=1}^r \grs_{2i}$. Therefore $\ol{Ke}$ is not normal. 

To have a uniform notation set 
$$
\tilde D_{2r+2} = \left\{
	\begin{array}{ll}
		D_{2r+2} & \text{if } r<p-1\\
		2D_{2r+2} & \text{if } r=p-1 
	\end{array} \right.
$$
Set $\tilde D_1 = D_1$ and $\tilde D_2 = D_2$, and define inductively for all $i=3,\ldots,2r+1$
$$
	\tilde D_i = D_2 + \tilde D_{i-2} - (\grs_1 + \ldots + \grs_{i-2}).
$$
Notice that 
$$
\tilde D_i = \left\{\begin{array}{ll}
	D_i & \text{if } i\leq 2r-1 \\
	D_{2r} + D_{2r+1} & \text{if } i=2r \\
	2D_{2r+1}+\tilde D_{2r+2} & \text{if } i=2r+1
	\end{array}	\right..
$$

\begin{proposition}
The semigroup $\grG_{D_2}$ is generated by $\tilde D_1, \ldots, \tilde D_{2r+1}$.
\end{proposition}

\begin{proof}
Let $\grs \in \mN \grS$ and suppose that $nD_2 - \grs \in \mN \grD$, denote $\grs = \sum a_i \grs_i$ and $nD_2 - \grs = \sum c_i D_i$. Notice that if $r=p-1$ then $c_{2r+3}$ is even. Therefore $nD_2 - \grs \in \langle D_1, \ldots, D_{2r+2}, \tilde D_{2r+3}\rangle_\mN$, and write
$$nD_2 - \grs = b_1 D_1 + \ldots + b_{2r+2} D_{2r+2} + b_{2r+3} \tilde D_{2r+3}.$$
Expressing the spherical roots in terms of colors it follows that $b_{2r+1} = b_{2r} + 2b_{2r+2}$. 
The claim follows.
\end{proof}

%
%

\paragraph{\textbf{Case with Roman numerals.}}
We deal here with the case \ref{sss:BDbay} ($r=p=q-1$), the case (I), the case (II) is similar. The notation will be slightly different than before: let us enumerate the spherical roots $\grS = \{\grs_1, \ldots, \grs_{2r}\}$ as follows
$$
	\grs_{2i-1} = \alpha'_i, \qquad \grs_{2i} = \gra_i, \qquad \mbox{for all }i = 1, \ldots, r.
$$
Accordingly, we enumerate the colors as follows: 
$D_{2i-1} = D_{\gra'_i}^-$ and $D_{2i} = D_{\gra'_i}^+$ for all $i = 1, \ldots, r$, 
$D_{2r+1} = D_{\gra_p}^-$ and $D_{2r+2} = D_{\gra'_{q}}$.

We have $D_\gop = D_2$. Notice that $D_2$ is not minuscule, indeed $D_1 = D_2 - \sum_{i=1}^r \grs_{2i}$. Therefore $\ol{Ke}$ is not normal. 

Set $\tilde D_1 = D_1$ and $\tilde D_2 = D_2$, and define inductively for all $i=3,\ldots,2r+1$
$$
	\tilde D_i = D_2 + \tilde D_{i-2} - (\grs_1 + \ldots + \grs_{i-2}).
$$
Notice that 
$$
\tilde D_i = \left\{\begin{array}{ll}
	D_i & \text{if } i\leq 2r-2 \\
	D_{2r-1} + D_{2r+2} & \text{if } i=2r-1 \\
	D_{2r} + D_{2r+1} + D_{2r+2} & \text{if } i=2r \\
	2D_{2r+1} + 2D_{2r+2} & \text{if } i=2r+1
	\end{array}	\right..
$$

\begin{proposition}
The semigroup $\grG_{D_2}$ is generated by $\tilde D_1, \ldots, \tilde D_{2r+1}$.
\end{proposition}

\begin{proof}
Let $\grs \in \mN \grS$ and suppose that $nD_2 - \grs \in \mN \grD$, let $\grs = \sum a_i \grs_i$ and $nD_2 - \grs = \sum c_i D_i$. Expressing the spherical roots in terms of colors it follows that 
$c_{2r+1} - c_{2r}=2a_{2r-1}$ and 
$c_{2r-1} + c_{2r+1} = c_{2r+2}$.
The claim follows.
\end{proof}

\subsubsection{Collapsed tails of type $\mathsf D$}

\paragraph{\textbf{Cases without Roman numerals.}} Let us deal with the case \ref{sss:BDady} ($r=q-1$). The cases  \ref{sss:DDady} ($r=q-1<p-1$) and \ref{sss:DDday} ($r=p-1<q-1$) are similar. Let us keep the notation of Section~\ref{sss:typeD}, therefore, for all $i=1,\ldots,r$, $D_{2i-1} = D_{\gra_i}^-$ and $D_{2i} = D_{\gra_i}^+$, furthermore $D_{2r+1} = D_{\gra'_r}^-$, $D_{2r+2} = D_{\gra'_{r+1}}^-$ and $D_{2r+3} = D_{\gra_{r+1}}$. To have a uniform notation, we denote
$$
\tilde D_{2r+3} = \left\{
	\begin{array}{ll}
		D_{2r+3} & \text{if } r<p-1  \\
		2D_{2r+3} & \text{if } r=p-1 
	\end{array} \right..
$$
We have $D_\gop = D_2$, which is minuscule in $\mN \grD$, therefore $\ol{Ke}$ is normal.

Set $\tilde D_1 = D_1$, $\tilde D_2 = D_2$ and, for all $k = 3, \ldots, 2r+2$,
$$
	\tilde D_k = D_2 + D_{k-2} - (\grs_1 + \ldots + \grs_{k-2}).
$$
Notice that
$$
\tilde D_k = \left\{ \begin{array}{ll}
		D_k  & \text{if } k \leq 2r \\
		D_k + D_{2r+2} + \tilde D_{2r+3} & \text{if } k = 2r+1, 2r+2
\end{array} \right..
$$
Furthermore, set
$$
	\tilde D'_{2r+2} = D_2 + D_{2r} - (\grs_1 + \ldots + \grs_{2r-1} + \grs_{2r+1})=2D_{2r+1}+\tilde D_{2r+3}.
$$

\begin{proposition}
The semigroup $\grG_{D_2}$ is generated by $\tilde D_{2i}$, $\tilde D_{2i-1}+\tilde D_{2j-1}$ for all $i,j=1, \ldots, r+1$ with $i+j \leq 2r+1$, and $\tilde D'_{2r+2}$.
\end{proposition}

\begin{proof}
Since $D_2 \in \grG_{D_2}$, for the even indices, 
it follows that $\tilde D_{2i} \in \grG_{D_2}$ for all $i \leq r+1$, and $\tilde D'_{2r+2}\in\grG_{D_2}$ as well. 
On the other hand
$$
	D_1 = D_2 - (\grs_2 + \grs_4 + \ldots + \grs_{2r-2} + \frac{\grs_{2r} + \grs_{2r+1}}{2}).
$$
Therefore, for the odd indices, we get $\tilde D_{2i-1} + \tilde D_{2j-1} \in \grG_{D_2}$ for all $i,j \leq r+1$.

Let $\grs \in \mN \grS$ and suppose that $nD_2 - \grs \in \mN \grD$, denote $\grs = \sum a_i \grs_i$ and $nD_2 - \grs = \sum c_i D_i$. Notice that if $r=p-1$ then $c_{2r+3}$ is even. Therefore $nD_2 - \grs \in \langle D_1, \ldots, D_{2r+2}, \tilde D_{2r+3}\rangle_\mN$, and write
$$nD_2 - \grs = b_1 D_1 + \ldots + b_{2r+2} D_{2r+2} + b_{2r+3} \tilde D_{2r+3}.$$

Expressing the coefficients $b_1, \ldots, b_{2r+3}$ with respect to $a_1, \ldots, a_{2r+1}$ we get that $\sum_{i=1}^{r+1} b_{2i-1} = 2 a_{2r+1}$, and $b_{2r+1} + b_{2r+2} = 2b_{2r+3}$. The claim follows.
\end{proof}

\paragraph{\textbf{Cases with Roman numerals.}} Let us deal with the case \ref{sss:DDady} ($r=p-1=q-1$), the case (I). The case (II) as well as the cases \ref{sss:DDday} ($r=q-1=p-1$) are similar. First, let us suppose $r>1$. Let us keep the notation of Section~\ref{sss:typeD}, therefore, for all $i=1,\ldots,r$, $D_{2i-1} = D_{\gra_i}^-$ and $D_{2i} = D_{\gra_i}^+$, furthermore $D_{2r+1} = D_{\gra'_{q-1}}^-$, $D_{2r+2} = D_{\gra'_{q}}^-$ and $D_{2r+3} = D_{\gra_{p}}$. 

We have $D_\gop = D_2$, which is minuscule in $\mN \grD$, therefore $\ol{Ke}$ is normal. 

Set $\tilde D_1 = D_1$ and $\tilde D_2 = D_2$, and define inductively, for all $k = 3, \ldots, 2r+2$,
$$
	\tilde D_k = D_2 + \tilde D_{k-2} - (\grs_1 + \ldots + \grs_{k-2}).
$$
Notice that
$$
\tilde D_k = \left\{ \begin{array}{ll}
		D_k  & \text{if } k \leq 2r-2 \\
		D_k + D_{2r+3} & \text{if } k= 2r-1, 2r \\
		D_k + D_{2r+2}+2D_{2r+3} & \text{if } k= 2r+1, 2r+2
\end{array} \right..
$$
Furthermore, set
$$
	\tilde D'_{2r+2} = D_2 + \tilde D_{2r} - (\grs_1 + \ldots + \grs_{2r-1} + \grs_{2r+1})=2D_{2r+1}+2D_{2r+3}.
$$

\begin{proposition}
The semigroup $\grG_{D_2}$ is generated by $\tilde D_{2i}$, $\tilde D_{2i-1}+\tilde D_{2j-1}$ for all $i,j=1, \ldots, r+1$ with $i+j \leq 2r+1$, and $\tilde D'_{2r+2}$.
\end{proposition}

\begin{proof}
Since $D_2 \in \grG_{D_2}$, for the even indices, 
it follows that $\tilde D_{2i} \in \grG_{D_2}$ for all $i \leq r+1$, and $\tilde D'_{2r+2}\in\grG_{D_2}$ as well. 
On the other hand
$$
	D_1 = D_2 - (\grs_2 + \grs_4 + \ldots + \grs_{2r-2} + \frac{\grs_{2r} + \grs_{2r+1}}{2}).
$$
Therefore, for the odd indices, we get $\tilde D_{2i-1} + \tilde D_{2j-1} \in \grG_{D_2}$ for all $i,j \leq r+1$.

Let $\grs \in \mN \grS$ and suppose that $nD_2 - \grs \in \mN \grD$, let $\grs = \sum a_i \grs_i$ and $nD_2 - \grs = \sum c_i D_i$. Expressing the coefficients $c_1, \ldots, c_{2r+3}$ with respect to $a_1, \ldots, a_{2r+1}$ we get that $\sum_{i=1}^{r+1} c_{2i-1} = 2 a_{2r+1}$, and $c_{2r-1}+c_{2r}+c_{2r+1} + c_{2r+2} = c_{2r+3}$. The claim follows.
\end{proof}

We are left with the case $r=1$, the case $r=0$ being symmetric. Here we have $D_\gop=D_2+D_5$, which is minuscule in $\mN \grD$, therefore $\ol{Ke}$ is normal. Proceeding as above we get the same semigroup, that is, $\Gamma_{D_2+D_5}$ is generated by $D_2+D_5$, $2D_4+2D_5$, $2D_3+2D_5$, $2D_1+2D_5$, $D_1+D_3+D_4+3D_5$.


\appendix

\section{List of spherical nilpotent $K$-orbits  in $\mathfrak p$ in the classical non-Hermitian cases}\label{A}

\renewcommand{\thesubsection}{\arabic{subsection}}

Here we report the list of the spherical nilpotent
$K$-orbits in $\mathfrak p$ for all symmetric pairs $(\mathfrak
g,\mathfrak k)$ of classical non-Hermitian type.

Every $K$-orbit in $\mathfrak p$ is labelled with the signed partition of the corresponding real nilpotent orbit, via the Kostant-Sekiguchi-\makebox[0pt]{\rule{3pt}{0pt}\rule[4pt]{3pt}{0.8pt}}Dokovi\'c bijection.

For every orbit we provide a representative $e$, as well as a normal triple containing it $\{h,e,f\}$.

For all $i\in\mathbb Z$, let $\mathfrak k(i)$ be the $\mathrm{ad}h$-eigenspace in $\mathfrak k$ of eigenvalue $i$. We denote by $Q$ the parabolic subgroup of $K$ whose Lie algebra is equal to
\[\mathrm{Lie}\,Q=\bigoplus_{i\geq0}\mathfrak k(i).\]

In each case we describe the centralizer of $h$, which we denote by $K_h$ or by $L$, which is a Levi subgroup of $Q$. We denote by $Q^\mathrm u$ the unipotent radical of $Q$. Then we describe the centralizer of $e$, which we denote by $K_e$. A Levi subgroup of $K_e$ is always given by $L_e$, the centralizer of $e$ in $L$. The unipotent radical of $K_e$ is either equal to $Q^\mathrm u$ or equal to a co-simple $L_e$-submodule of $Q^\mathrm u$. In the latter case, there always exist some simple $L_e$-submodules in $\mathfrak k(1)$, say $W_0,\ldots,W_d$, which we determine, with the following properties. They are isomorphic as $L_e$-modules but lie in pairwise distinct isotypical $L$-components. Denoting by $V$ the $L_e$-complement of $W_0\oplus \ldots \oplus W_d$ in $\mathrm{Lie}\,Q^\mathrm u$, as $L_e$-module, 
\[\mathrm{Lie}\,K_e^\mathrm u=W\oplus V,\]
where $W$ is a co-simple $L_e$-submodule of $W_0\oplus\ldots\oplus W_d$ which projects non-trivially on every summand $W_0,\ldots,W_d$. Actually, the integer $d+1$, the number of the above simple $L_e$-modules $W_0,\ldots,W_d$, will only be equal to 2 or 3.

\begin{remark}\label{remark:missingcase}
As already mentioned, the list of spherical nilpotent $K$-orbits in $\mathfrak p$ is in \cite{Ki04}, and all the data in our list, such as a representative and its centralizer, can be directly computed using the information contained therein, with one exception. There is one missing case in \cite{Ki04}, corresponding to the signed partition $(+3^4,+1^{2n-8})$ for the symmetric pair $(\mathfrak{sp}(2n+4), \mathfrak{sp}(2n) + \mathfrak{sp}(4))$ with $n\geq4$ (cases \ref{sss:CCabx} and \ref{sss:CCbax} in Appendix \ref{A}). The lack comes from a small mistake in \cite[Lemma~7.2]{Ki04}, we have checked that there is no further missing case arising from that lemma. The smallest case of this family, which is for $n=4$, was already present in \cite[Example~5.8]{Pa1}.
\end{remark}

\subsection{$\mathfrak{sl}(2n)/\mathfrak{sp}(2n)$}

$K=\mathrm{Sp}(2n)$, $n\geq 2$, $\mathfrak p=V(\omega_2)$.

Let us fix a basis $e_1,\ldots,e_n,e_{-n},\ldots,e_{-1}$ of $\mathbb C^{2n}$, a skew-symmetric bilinear form $\omega$ such that $\omega(e_i,e_j)=\delta_{i,-j}$ for $1\leq i\leq n$ and $K=\mathrm{Sp}(\mathbb C^{2n},\omega)$. Then $\omega$ can be seen as a linear form on $\mathsf\Lambda^2\mathbb C^{2n}$, $\omega(e_i\wedge e_j)=\omega(e_i,e_j)$, and 
\[\mathfrak p=\ker\omega\subset\mathsf\Lambda^2\mathbb C^{2n}.\]

\subsubsection{$\mathbf{(2^{r},1^{n-2r})}$, $r\geq1$}\label{sss:symm1}

\[e=\sum_{i=1}^re_i\wedge e_{2r-i+1},\quad
h(e_i)=\left\{\begin{array}{cl}
e_i & \mbox{if $1\leq i\leq 2r$}\\
-e_i & \mbox{if $-2r\leq i\leq -1$}\\
0 & \mbox{otherwise}
\end{array}\right.,\quad
f=\sum_{i=1}^re_{-2r+i-1}\wedge e_{-i}.\] 
Let $Q=L\,Q^\mathrm u$ be the corresponding parabolic subgroup of $K$,
so that $L=K_h\cong\mathrm{GL}(2r)\times\mathrm{Sp}(2n-4r)$.

The centralizer of $e$ is $K_e=L_eQ^\mathrm u$ where
$L_e\cong\mathrm{Sp}(2r)\times\mathrm{Sp}(2n-4r)$.

\subsection{$\mathfrak{sl}(2n+1)/\mathfrak{so}(2n+1)$}

$K=\mathrm{SO}(2n+1)$, $n\geq2$, $\mathfrak p=V(2\omega_1)$.

Let us fix a basis $e_1,\ldots,e_n,e_0,e_{-n},\ldots,e_{-1}$ of $\mathbb C^{2n+1}$, a symmetric bilinear form $\beta$ such that $\beta(e_i,e_j)=\delta_{i,-j}$ for all $i,j$ and $K=\mathrm{SO}(\mathbb C^{2n+1},\beta)$. Then $\beta$ can be seen as a linear form on $\mathsf S^2\mathbb C^{2n+1}$, $\beta(e_i e_j)=\beta(e_i,e_j)$, and 
\[\mathfrak p=\ker\beta\subset\mathsf S^2\mathbb C^{2n+1}.\]

\subsubsection{$\mathbf{(2^{r},1^{2n-2r+1})}$, $r\geq1$}\label{sss:symm2}

\[e=\sum_{i=1}^re_i e_{r-i+1},\quad
h(e_i)=\left\{\begin{array}{cl}
e_i & \mbox{if $1\leq i\leq r$}\\
-e_i & \mbox{if $-r\leq i\leq -1$}\\
0 & \mbox{otherwise}
\end{array}\right.,\quad
f=\sum_{i=1}^re_{-r+i-1} e_{-i}.\] 
Let $Q=L\,Q^\mathrm u$ be the corresponding parabolic subgroup of $K$,
so that $L=K_h\cong\mathrm{GL}(r)\times\mathrm{SO}(2n-2r+1)$.

The centralizer of $e$ is $K_e=L_eQ^\mathrm u$ where
$L_e\cong\mathrm{O}(r)\times\mathrm{SO}(2n-2r+1)$.

\subsection{$\mathfrak{sl}(2n)/\mathfrak{so}(2n)$}

$K=\mathrm{SO}(2n)$, $n\geq3$, $\mathfrak p=V(2\omega_1)$. If $n=2$, $\mathfrak p=V(2\omega+2\omega')$.

Let us fix a basis $e_1,\ldots,e_n,e_{-n},\ldots,e_{-1}$ of $\mathbb C^{2n}$, a symmetric bilinear form $\beta$ such that $\beta(e_i,e_j)=\delta_{i,-j}$ for all $i,j$ and $K=\mathrm{SO}(\mathbb C^{2n},\beta)$. Then $\beta$ can be seen as a linear form on $\mathsf S^2\mathbb C^{2n}$, $\beta(e_i e_j)=\beta(e_i,e_j)$, and 
\[\mathfrak p=\ker\beta\subset\mathsf S^2\mathbb C^{2n}.\] 

Let us denote by $\tau$ the linear endomorphism of $\mathbb C^{2n}$ switching $e_n$ and $e_{-n}$ and fixing all the other basis vectors. The conjugation by $\tau$ is an involutive external automorphism of $\mathfrak g$, living $\mathfrak k$ and $\mathfrak p$ stable, and inducing the nontrivial involution of the Dynkin diagram of $\mathfrak k$.

\subsubsection{$\mathbf{(2^{r},1^{2n-2r})}$, $r \geq 1$}\label{sss:Dao}

If $r< n$,
\[e=\sum_{i=1}^re_i e_{r-i+1},\quad
h(e_i)=\left\{\begin{array}{cl}
e_i & \mbox{if $1\leq i\leq r$}\\
-e_i & \mbox{if $-r\leq i\leq -1$}\\
0 & \mbox{otherwise}
\end{array}\right.,\quad
f=\sum_{i=1}^re_{-r+i-1} e_{-i}.\] 
Let $Q=L\,Q^\mathrm u$ be the corresponding parabolic subgroup of $K$,
so that $L=K_h\cong\mathrm{GL}(r)\times\mathrm{SO}(2n-2r)$.

The centralizer of $e$ is $K_e=L_eQ^\mathrm u$ where
$L_e\cong\mathrm{O}(r)\times\mathrm{SO}(2n-2r)$.

If $r=n$, there exist two orbits labelled I and II. 
The case (I) can be described as above by specializing $r$ equal to $n$.
The case (II) can be obtained from the case (I) conjugating by $\tau$.

\subsection{$\mathfrak{sp}(2p+2q)/\mathfrak{sp}(2p) + \mathfrak{sp}(2q)$}\label{sse:sp}

$K=\mathrm{Sp}(2p)\times\mathrm{Sp}(2q)$, $p,q\geq1$, $\mathfrak p=V(\omega_1+\omega_1')$.

Let us fix a basis $e_1,\ldots,e_p,e_{-p},\ldots,e_{-1}$ of $\mathbb C^{2p}$ and a skew-symmetric bilinear form $\omega$ such that $\omega(e_i,e_j)=\delta_{i,-j}$ for $1\leq i\leq p$. Similarly, let us fix a basis $e'_1,\ldots,e'_q,e'_{-q},\ldots,e'_{-1}$ of $\mathbb C^{2q}$ and a skew-symmetric bilinear form $\omega'$ such that $\omega'(e'_i,e'_j)=\delta_{i,-j}$ for $1\leq i\leq q$. Then $K=\mathrm{Sp}(\mathbb C^{2p},\omega)\times\mathrm{Sp}(\mathbb C^{2q},\omega')$ and 
\[\mathfrak p=\mathbb C^{2p}\otimes\mathbb C^{2q}.\]

\subsubsection{$\mathbf{(+2^{2r},+1^{2p-2r},-1^{2q-2r})}$, $r\geq 1$}\label{sss:symm4.1}

\[e=\sum_{i=1}^re_i\otimes e'_{r-i+1},\qquad
f=-\sum_{i=1}^re_{-r+i-1}\otimes e'_{-i},\] 
\[h(e_i)=\left\{\begin{array}{cl}
e_i & \mbox{if $1\leq i\leq r$}\\
-e_i & \mbox{if $-r\leq i\leq -1$}\\
0 & \mbox{otherwise}
\end{array}\right.,\quad
h(e'_i)=\left\{\begin{array}{cl}
e'_i & \mbox{if $1\leq i\leq r$}\\
-e'_i & \mbox{if $-r\leq i\leq -1$}\\
0 & \mbox{otherwise}
\end{array}\right..\]
Let $Q=L\,Q^\mathrm u$ be the corresponding parabolic subgroup of $K$,
so that $L=K_h\cong\mathrm{GL}(r)\times\mathrm{Sp}(2p-2r)\times\mathrm{GL}(r)\times\mathrm{Sp}(2q-2r)$.

The centralizer of $e$ is $K_e=L_eQ^\mathrm u$ where
$L_e\cong\mathrm{GL}(r)\times\mathrm{Sp}(2p-2r)\times\mathrm{Sp}(2q-2r)$,
the $\mathrm{GL}(r)$ factor of $L_e$ is embedded skew-diagonally, $A\mapsto(A,A^{-1})$, into the $\mathrm{GL}(r)\times\mathrm{GL}(r)$ factor of $L$.

\subsubsection{$\mathbf{(+3^{2},+1^{2p-4},-1^{2q-2})}$}\label{sss:CCaac}

\[e=e_1\otimes e'_{-1}-e_2\otimes e'_{1},\qquad
f=2e_{-2}\otimes e'_{-1}+2e_{-1}\otimes e'_{1},\] 
\[h(e_i)=\left\{\begin{array}{cl}
2e_i & \mbox{if $1\leq i\leq 2$}\\
-2e_i & \mbox{if $-2\leq i\leq -1$}\\
0 & \mbox{otherwise}
\end{array}\right.,\quad
h(e'_i)=0\ \forall\,i.\]
Let $Q=L\,Q^\mathrm u$ be the corresponding parabolic subgroup of $K$,
so that $L=K_h\cong\mathrm{GL}(2)\times\mathrm{Sp}(2p-4)\times\mathrm{Sp}(2q)$.

The centralizer of $e$ is $K_e=L_eQ^\mathrm u$ where
$L_e\cong\mathrm{SL}(2)\times\mathrm{Sp}(2p-4)\times\mathrm{Sp}(2q-2)$,
the $\mathrm{SL}(2)\times\mathrm{Sp}(2q-2)$ factor of $L_e$ is embedded as 
\[(A,B)\mapsto(A,A,B)\] into $\mathrm{SL}(2)\times\mathrm{Sp}(2)\times\mathrm{Sp(2q-2)}$ 
where the $\mathrm{SL}(2)$ factor is included in the $\mathrm{GL}(2)$ factor of $L$
and the $\mathrm{Sp}(2)\times\mathrm{Sp(2q-2)}$ factor is included in the $\mathrm{Sp}(2q)$ factor of $L$.

\subsubsection{$\mathbf{(-3^{2},+1^{2p-2},-1^{2q-4})}$}\label{sss:CCaac2}

This case can be obtained from the case \ref{sss:CCaac} by switching the role of $p$ and $q$.

\subsubsection{$\mathbf{(+3^{2},+2^{2},+1^{2p-6},-1^{2q-4})}$}\label{sss:CCacy}

\[e=e_1\otimes e'_{-2}-e_2\otimes e'_{2}-e_3\otimes e'_{1},\quad
f=e_{-3}\otimes e'_{-1}+2e_{-2}\otimes e'_{-2}+2e_{-1}\otimes e'_{2},\] 
\[h(e_i)=\left\{\begin{array}{cl}
2e_i & \mbox{if $1\leq i\leq 2$}\\
e_i & \mbox{if $i=3$}\\
-e_i & \mbox{if $i=-3$}\\
-2e_i & \mbox{if $-2\leq i\leq -1$}\\
0 & \mbox{otherwise}
\end{array}\right.,\quad
h(e'_i)=\left\{\begin{array}{cl}
e'_i & \mbox{if $i=1$}\\
-e'_i & \mbox{if $i=-1$}\\
0 & \mbox{otherwise}
\end{array}\right..\]
Let $Q=L\,Q^\mathrm u$ be the corresponding parabolic subgroup of $K$,
so that $L=K_h\cong\mathrm{GL}(2)\times\mathrm{GL}(1)\times\mathrm{Sp}(2p-6)\times\mathrm{GL}(1)\times\mathrm{Sp}(2q-2)$.

The centralizer of $e$ is $K_e=L_eK_e^\mathrm u$ where
$L_e\cong\mathrm{SL}(2)\times\mathrm{GL}(1)\times\mathrm{Sp}(2p-3)\times\mathrm{Sp}(2q-4)$,
the $\mathrm{SL}(2)\times\mathrm{Sp}(2q-4)$ factor of $L_e$ is embedded as 
\[(A,B)\mapsto(A,A,B)\] into $\mathrm{SL}(2)\times\mathrm{Sp}(2)\times\mathrm{Sp(2q-4)}$ 
where the $\mathrm{SL}(2)$ factor is included in the $\mathrm{GL}(2)$ factor of $L$
and the $\mathrm{Sp}(2)\times\mathrm{Sp(2q-4)}$ factor is included in the $\mathrm{Sp}(2q-2)$ factor of L,
the $\mathrm{GL}(1)$ factor of $L_e$ is embedded skew-diagonally
\[z\mapsto(z,z^{-1})\]
into the $\mathrm{GL}(1)\times\mathrm{GL}(1)$ factor of $L$.
The quotient $\mathrm{Lie}\,Q^\mathrm u/\mathrm{Lie}\,K_e^\mathrm u$ is a simple $L_e$-module of dimension 2 as follows. In $\mathfrak k(1)$ there are exactly two simple $L_e$-submodules, $W_0,W_1$, of highest weight $\omega_1$ w.r.t.\ the $\mathrm{SL}(2)$ factor, isomorphic as $L_e$-modules but lying in two distinct isotypical $L$-components. Let $V$ be the $L_e$-complement of $W_0\oplus W_1$ in $\mathrm{Lie}\,Q^\mathrm u$. As $L_e$-module, $\mathrm{Lie}\,K_e^\mathrm u$ is the direct sum of $V$ and a simple $L_e$-submodule of $W_0\oplus W_1$ which projects non-trivially on both summands $W_0$ and $W_1$.

\subsubsection{$\mathbf{(-3^{2},+2^{2},+1^{2p-4},-1^{2q-6})}$}\label{sss:CCcay}

This case can be obtained from the case \ref{sss:CCacy} by switching the role of $p$ and $q$.

\subsubsection{$\mathbf{(+3^4,+1^{2p-8})}$, $q=2$}\label{sss:CCabx}

\[e=e_1\otimes e'_{-1}+e_2\otimes e'_{-2}-e_3\otimes e'_2-e_4\otimes e'_1,\]
\[h(e_i)=\left\{\begin{array}{cl}
2e_i & \mbox{if $1\leq i\leq 4$}\\
-2e_i & \mbox{if $-4\leq i\leq -1$}\\
0 & \mbox{otherwise}
\end{array}\right.,\quad
h(e'_i)=0\ \forall\,i,\]
\[f=2(e_{-4}\otimes e'_{-1}+e_{-3}\otimes e'_{-2}+e_{-2}\otimes e'_{2}+e_{-1}\otimes e'_{1}).\] 
Let $Q=L\,Q^\mathrm u$ be the corresponding parabolic subgroup of $K$,
so that $L=K_h\cong\mathrm{GL}(4)\times\mathrm{Sp}(2p-8)\times\mathrm{Sp}(4)$.

The centralizer of $e$ is $K_e=L_eQ^\mathrm u$ where
$L_e\cong\mathrm{Sp}(4)\times\mathrm{Sp}(2p-8)$,
the $\mathrm{Sp}(4)$ factor of $L_e$ is embedded diagonally,  
$A\mapsto(A,A)$, into the $\mathrm{GL}(4)\times\mathrm{Sp}(4)$ 
factor of $L$.

\subsubsection{$\mathbf{(-3^4,-1^{2q-8})}$, $p=2$}\label{sss:CCbax}

This case can be obtained from the case \ref{sss:CCabx} by switching the role of $p$ and $q$.

\subsection{$\mathfrak{so}(2n+1)/\mathfrak{so}(2n)$}

$K=\mathrm{SO}(2n)$, $n\geq3$, $\mathfrak p=V(\omega_1)$. If $n=2$, $\mathfrak p=V(\omega+\omega')$.

Let us fix a basis $e_1,\ldots,e_n,e_{-n},\ldots,e_{-1}$ of $\mathbb C^{2n}$, a symmetric bilinear form $\beta$ such that $\beta(e_i,e_j)=\delta_{i,-j}$ for all $i,j$ and $K=\mathrm{SO}(\mathbb C^{2n},\beta)$. Then  
\[\mathfrak p=\mathbb C^{2n}.\]

\subsubsection{$\mathbf{(+3,+1^{2n-2})}$}\label{sss:trvial1}

\[e=e_1,\quad
h(e_i)=\left\{\begin{array}{cl}
2e_i & \mbox{if $i=1$}\\
-2e_i & \mbox{if $i=-1$}\\
0 & \mbox{otherwise}
\end{array}\right.,\quad
f=-2e_{-1}.\] 
Let $Q=L\,Q^\mathrm u$ be the corresponding parabolic subgroup of $K$,
so that $L=K_h\cong\mathrm{GL}(1)\times\mathrm{SO}(2n-2)$.

The centralizer of $e$ is $K_e=L_eQ^\mathrm u$ where
$L_e\cong\mathrm{SO}(2n-2)$.

\subsection{$\mathfrak{so}(2n+2)/\mathfrak{so}(2n+1)$}

$K=\mathrm{SO}(2n+1)$, $n\geq2$, $\mathfrak p=V(\omega_1)$. If $n=1$, $\mathfrak p=V(2\omega)$.

Let us fix a basis $e_1,\ldots,e_n,e_0,e_{-n},\ldots,e_{-1}$ of $\mathbb C^{2n+1}$, a symmetric bilinear form $\beta$ such that $\beta(e_i,e_j)=\delta_{i,-j}$ for all $i,j$ and $K=\mathrm{SO}(\mathbb C^{2n+1},\beta)$. Then  
\[\mathfrak p=\mathbb C^{2n+1}.\]

\subsubsection{$\mathbf{(+3,+1^{2n-1})}$}\label{sss:trvial2}

\[e=e_1,\quad
h(e_i)=\left\{\begin{array}{cl}
2e_i & \mbox{if $i=1$}\\
-2e_i & \mbox{if $i=-1$}\\
0 & \mbox{otherwise}
\end{array}\right.,\quad
f=-2e_{-1}.\] 
Let $Q=L\,Q^\mathrm u$ be the corresponding parabolic subgroup of $K$,
so that $L=K_h\cong\mathrm{GL}(1)\times\mathrm{SO}(2n-1)$.

The centralizer of $e$ is $K_e=L_eQ^\mathrm u$ where
$L_e\cong\mathrm{SO}(2n-1)$.

\subsection{$\mathfrak{so}(2p+2q+1)/\mathfrak{so}(2p+1) + \mathfrak{so}(2q)$}

$K=\mathrm{SO}(2p+1)\times\mathrm{SO}(2q)$, $p\geq2$, $q\geq3$, $\mathfrak p=V(\omega_1+\omega'_1)$. 
If $p=1$ and $q\geq3$, $\mathfrak p=V(2\omega+\omega'_1)$.
If $p\geq2$ and $q=2$, $\mathfrak p=V(\omega_1+\omega'+\omega'')$.
If $p=1$ and $q=2$, $\mathfrak p=V(2\omega+\omega'+\omega'')$.

Let us fix a basis $e_1,\ldots,e_p,e_0,e_{-p},\ldots,e_{-1}$ of $\mathbb C^{2p+1}$ and a symmetric bilinear form $\beta$ such that $\beta(e_i,e_j)=\delta_{i,-j}$ for all $i,j$. Similarly, let us fix a basis $e'_1,\ldots,e'_q,e'_{-q},\ldots,e'_{-1}$ of $\mathbb C^{2q}$ and a symmetric bilinear form $\beta'$ such that $\beta'(e'_i,e'_j)=\delta_{i,-j}$ for all $i,j$. Then $K=\mathrm{SO}(\mathbb C^{2p+1},\beta)\times\mathrm{SO}(\mathbb C^{2q},\beta')$ and 
\[\mathfrak p=\mathbb C^{2p+1}\otimes\mathbb C^{2q}.\] 

Let us denote by $\tau$ the linear endomorphism of $\mathbb C^{2p+2q+1}$ switching $e'_q$ and $e'_{-q}$ and fixing all the other basis vectors. The conjugation by $\tau$ is an involutive external automorphism of $\mathfrak g$, living $\mathfrak k$ and $\mathfrak p$ stable, and inducing the nontrivial involution of the Dynkin diagram of $\mathfrak k$.

\subsubsection{$\mathbf{(+2^{2r},+1^{2p+1-2r},-1^{2q-2r})}$, $r \geq 1$}\label{sss:BDaa}

If $r<q$,
\[e=\sum_{i=1}^re_i\otimes e'_{r-i+1},\qquad
f=-\sum_{i=1}^re_{-r+i-1}\otimes e'_{-i},\] 
\[h(e_i)=\left\{\begin{array}{cl}
e_i & \mbox{if $1\leq i\leq r$}\\
-e_i & \mbox{if $-r\leq i\leq -1$}\\
0 & \mbox{otherwise}
\end{array}\right.,\quad
h(e'_i)=\left\{\begin{array}{cl}
e'_i & \mbox{if $1\leq i\leq r$}\\
-e'_i & \mbox{if $-r\leq i\leq -1$}\\
0 & \mbox{otherwise}
\end{array}\right..\]
Let $Q=L\,Q^\mathrm u$ be the corresponding parabolic subgroup of $K$,
so that $L=K_h\cong\mathrm{GL}(r)\times\mathrm{SO}(2p-2r+1)\times\mathrm{GL}(r)\times\mathrm{SO}(2q-2r)$.

The centralizer of $e$ is $K_e=L_eQ^\mathrm u$ where
$L_e\cong\mathrm{GL}(r)\times\mathrm{SO}(2p-2r+1)\times\mathrm{SO}(2q-2r)$,
the $\mathrm{GL}(r)$ factor of $L_e$ is embedded skew-diagonally, $A\mapsto(A,A^{-1})$, into the $\mathrm{GL}(r)\times\mathrm{GL}(r)$ factor of $L$.

If $r=q$, there exist two orbits labelled I and II. 
The case (I) can be described as above by specializing $r$ equal to $q$.
The case (II) can be obtained from the case (I) conjugating by $\tau$.

\subsubsection{$\mathbf{(+3,+2^{2r},+1^{2p-1-2r},-1^{2q-1-2r})}$}\label{sss:BDady}

If $r\leq q-2$,
\[e=e_1\otimes (e'_{q}+e'_{-q})+\sum_{i=1}^re_{i+1}\otimes e'_{r-i+1},\quad
f=-\big(\sum_{i=1}^re_{-r+i-2}\otimes e'_{-i}\big)-e_{-1}\otimes(e'_q+e'_{-q}),\] 
\[h(e_i)=\left\{\begin{array}{cl}
2e_i & \mbox{if $i=1$}\\
e_i & \mbox{if $2\leq i\leq r+1$}\\
-e_i & \mbox{if $-r-1\leq i\leq -2$}\\
-2e_i & \mbox{if $i=-1$}\\
0 & \mbox{otherwise}
\end{array}\right.,\quad
h(e'_i)=\left\{\begin{array}{cl}
e'_i & \mbox{if $1\leq i\leq r$}\\
-e'_i & \mbox{if $-r\leq i\leq -1$}\\
0 & \mbox{otherwise}
\end{array}\right..\]
Let $Q=L\,Q^\mathrm u$ be the corresponding parabolic subgroup of $K$,
so that $L=K_h\cong\mathrm{GL}(1)\times\mathrm{GL}(r)\times\mathrm{SO}(2p-2r-1)
\times\mathrm{GL}(r)\times\mathrm{SO}(2q-2r)$.

The centralizer of $e$ is $K_e=L_eK_e^\mathrm u$ where
$L_e\cong\mathrm{GL}(r)\times\mathrm{SO}(2p-2r-1)\times\mathrm S(\mathrm{O}(1)\times\mathrm{O}(2q-2r-1))$,
the $\mathrm S(\mathrm{O}(1)\times\mathrm{O}(2p-2r-1))$ factor of $L_e$ is embedded as 
\[(z,A)\mapsto(z,z,A)\] into $\mathrm{GL}(1)\times\mathrm S(\mathrm{O}(1)\times\mathrm{O}(2q-2r-1))$ 
where the $\mathrm S(\mathrm{O}(1)\times\mathrm{O}(2q-2r-1))$ factor 
is included in the $\mathrm{SO}(2q-2r)$ factor of L,
the $\mathrm{GL}(r)$ factor of $L_e$ is embedded skew-diagonally
\[B\mapsto(B,B^{-1})\]
into the $\mathrm{GL}(r)\times\mathrm{GL}(r)$ factor of $L$.
The quotient $\mathrm{Lie}\,Q^\mathrm u/\mathrm{Lie}\,K_e^\mathrm u$ is a simple $L_e$-module of dimension $r$ as follows. In $\mathfrak k(1)$ there are exactly two simple $L_e$-submodules, $W_0,W_1$, of highest weight $\omega_{r-1}$ w.r.t.\ the $\mathrm{GL}(r)$ factor, isomorphic as $L_e$-modules but lying in two distinct isotypical $L$-components. Let $V$ be the $L_e$-complement of $W_0\oplus W_1$ in $\mathrm{Lie}\,Q^\mathrm u$. As $L_e$-module, $\mathrm{Lie}\,K_e^\mathrm u$ is the direct sum of $V$ and a simple $L_e$-submodule of $W_0\oplus W_1$ which projects non-trivially on both summands $W_0$ and $W_1$.

If $r=q-1$,
the normal triple $h,e,f$, the parabolic subgroup $Q=L\,Q^\mathrm u$ 
and $L_e$ have the same description, with $K_e=L_eK_e^\mathrm u$.  
The quotient $\mathrm{Lie}\,Q^\mathrm u/\mathrm{Lie}\,K_e^\mathrm u$ remains a simple $L_e$-module of dimension $q-1$ but here in $\mathfrak k(1)$ there are exactly three simple $L_e$-submodules, $W_0,W_1,W_2$, of highest weight $\omega_{q-2}$ w.r.t.\ the $\mathrm{GL}(q-1)$ factor, isomorphic as $L_e$-modules but lying in three distinct isotypical $L$-components. Let $V$ be the $L_e$-complement of $W_0\oplus W_1\oplus W_2$ in $\mathrm{Lie}\,Q^\mathrm u$. As $L_e$-module, $\mathrm{Lie}\,K_e^\mathrm u$ is the direct sum of $V$ and a co-simple $L_e$-submodule of $W_0\oplus W_1\oplus W_2$ which projects non-trivially on every summand $W_0$, $W_1$ and $W_2$.

\subsubsection{$\mathbf{(-3,+2^{2r},+1^{2p-2r},-1^{2q-2-2r})}$}\label{sss:BDbay}
If $r\leq q-2$,
\[e=\big(\sum_{i=1}^re_i\otimes e'_{r-i+2}\big)+e_0\otimes e'_{1},\quad
f=-2e_0\otimes e'_{-1}-\sum_{i=1}^re_{-r+i-1}\otimes e'_{-i-1},\] 
\[h(e_i)=\left\{\begin{array}{cl}
e_i & \mbox{if $1\leq i\leq r$}\\
-e_i & \mbox{if $-r\leq i\leq -1$}\\
0 & \mbox{otherwise}
\end{array}\right.,\quad
h(e'_i)=\left\{\begin{array}{cl}
2e'_i & \mbox{if $i=1$}\\
e'_i & \mbox{if $2\leq i\leq r+1$}\\
-e'_i & \mbox{if $-r-1\leq i\leq -2$}\\
-2e'_i & \mbox{if $i=-1$}\\
0 & \mbox{otherwise}
\end{array}\right..\]
Let $Q=L\,Q^\mathrm u$ be the corresponding parabolic subgroup of $K$,
so that $L=K_h\cong\mathrm{GL}(r)\times\mathrm{SO}(2p-2r+1)\times\mathrm{GL}(1)\times\mathrm{GL}(r)\times\mathrm{SO}(2q-2r-2)$.

The centralizer of $e$ is $K_e=L_eK_e^\mathrm u$ where
$L_e\cong\mathrm{GL}(r)\times\mathrm S(\mathrm{O}(1)\times\mathrm{O}(2p-2r))\times\mathrm{SO}(2q-2r-2)$,
the $\mathrm S(\mathrm{O}(1)\times\mathrm{O}(2p-2r))$ factor of $L_e$ is embedded as 
\[(z,A)\mapsto(z,A,z)\] into $\mathrm S(\mathrm{O}(1)\times\mathrm{O}(2p-2r))\times\mathrm{GL}(1)$ 
where the $\mathrm S(\mathrm{O}(1)\times\mathrm{O}(2p-2r))$ factor is included in the $\mathrm{SO}(2p-2r+1)$ factor of L,
the $\mathrm{GL}(r)$ factor of $L_e$ is embedded skew-diagonally
\[B\mapsto(B,B^{-1})\]
into the $\mathrm{GL}(r)\times\mathrm{GL}(r)$ factor of $L$.
The quotient $\mathrm{Lie}\,Q^\mathrm u/\mathrm{Lie}\,K_e^\mathrm u$ is a simple $L_e$-module of dimension $r$ as follows. In $\mathfrak k(1)$ there are exactly two simple $L_e$-submodules, $W_0,W_1$, of highest weight $\omega_1$ w.r.t.\ the $\mathrm{GL}(r)$ factor, isomorphic as $L_e$-modules but lying in two distinct isotypical $L$-components. Let $V$ be the $L_e$-complement of $W_0\oplus W_1$ in $\mathrm{Lie}\,Q^\mathrm u$. As $L_e$-module, $\mathrm{Lie}\,K_e^\mathrm u$ is the direct sum of $V$ and a simple $L_e$-submodule of $W_0\oplus W_1$ which projects non-trivially on both summands $W_0$ and $W_1$.

If $r=q-1$, there exist two orbits labelled I and II. 
The case (I) can be described as above by specializing $r$ equal to $q-1$.
The case (II) can be obtained from the case (I) conjugating by $\tau$.

\subsection{$\mathfrak{so}(2p+2q+2)/\mathfrak{so}(2p+1) + \mathfrak{so}(2q+1)$}

$K=\mathrm{SO}(2p+1)\times\mathrm{SO}(2q+1)$, $p,q\geq2$, $\mathfrak p=V(\omega_1+\omega'_1)$. 
If $p=1$ and $q\geq2$, $\mathfrak p=V(2\omega+\omega'_1)$.  
If $p\geq2$ and $q=1$, $\mathfrak p=V(\omega_1+2\omega')$.
If $p=q=1$, $\mathfrak p=V(2\omega+2\omega')$.  

Let us fix a basis $e_1,\ldots,e_p,e_0,e_{-p},\ldots,e_{-1}$ of $\mathbb C^{2p+1}$ and a symmetric bilinear form $\beta$ such that $\beta(e_i,e_j)=\delta_{i,-j}$ for all $i,j$. Similarly, let us fix a basis $e'_1,\ldots,e'_q,e'_0$, $e'_{-q},\ldots,e'_{-1}$ of $\mathbb C^{2q+1}$ and a symmetric bilinear form $\beta'$ such that $\beta'(e'_i,e'_j)=\delta_{i,-j}$ for all $i,j$. Then $K=\mathrm{SO}(\mathbb C^{2p+1},\beta)\times\mathrm{SO}(\mathbb C^{2q+1},\beta')$ and 
\[\mathfrak p=\mathbb C^{2p+1}\otimes\mathbb C^{2q+1}.\]

\subsubsection{$\mathbf{(+2^{2r},+1^{2p+1-2r},-1^{2q+1-2r})}$, $r\geq 1$}\label{sss:symm8}

\[e=\sum_{i=1}^re_i\otimes e'_{r-i+1},\qquad
f=-\sum_{i=1}^re_{-r+i-1}\otimes e'_{-i},\] 
\[h(e_i)=\left\{\begin{array}{cl}
e_i & \mbox{if $1\leq i\leq r$}\\
-e_i & \mbox{if $-r\leq i\leq -1$}\\
0 & \mbox{otherwise}
\end{array}\right.,\quad
h(e'_i)=\left\{\begin{array}{cl}
e'_i & \mbox{if $1\leq i\leq r$}\\
-e'_i & \mbox{if $-r\leq i\leq -1$}\\
0 & \mbox{otherwise}
\end{array}\right..\]
Let $Q=L\,Q^\mathrm u$ be the corresponding parabolic subgroup of $K$,
so that $L=K_h\cong\mathrm{GL}(r)\times\mathrm{SO}(2p-2r+1)\times\mathrm{GL}(r)\times\mathrm{SO}(2q-2r+1)$.

The centralizer of $e$ is $K_e=L_eQ^\mathrm u$ where
$L_e\cong\mathrm{GL}(r)\times\mathrm{SO}(2p-2r+1)\times\mathrm{SO}(2q-2r+1)$,
the $\mathrm{GL}(r)$ factor of $L_e$ is embedded skew-diagonally, $A\mapsto(A,A^{-1})$, into the $\mathrm{GL}(r)\times\mathrm{GL}(r)$ factor of $L$.

\subsubsection{$\mathbf{(+3,+2^{2r},+1^{2p-1-2r},-1^{2q-2r})}$}\label{sss:BBaby}

\[e=e_1\otimes e'_0+\sum_{i=1}^re_{i+1}\otimes e'_{r-i+1},\quad
f=-\big(\sum_{i=1}^re_{-r+i-2}\otimes e'_{-i}\big)-2e_{-1}\otimes e'_0,\] 
\[h(e_i)=\left\{\begin{array}{cl}
2e_i & \mbox{if $i=1$}\\
e_i & \mbox{if $2\leq i\leq r+1$}\\
-e_i & \mbox{if $-r-1\leq i\leq -2$}\\
-2e_i & \mbox{if $i=-1$}\\
0 & \mbox{otherwise}
\end{array}\right.,\quad
h(e'_i)=\left\{\begin{array}{cl}
e'_i & \mbox{if $1\leq i\leq r$}\\
-e'_i & \mbox{if $-r\leq i\leq -1$}\\
0 & \mbox{otherwise}
\end{array}\right..\]
Let $Q=L\,Q^\mathrm u$ be the corresponding parabolic subgroup of $K$,
so that $L=K_h\cong\mathrm{GL}(1)\times\mathrm{GL}(r)\times\mathrm{SO}(2p-2r-1)\times\mathrm{GL}(r)\times\mathrm{SO}(2q-2r+1)$.

The centralizer of $e$ is $K_e=L_eK_e^\mathrm u$ where
$L_e\cong\mathrm{GL}(r)\times\mathrm{SO}(2p-2r-1)\times\mathrm S(\mathrm{O}(1)\times\mathrm{O}(2q-2r))$,
the $\mathrm S(\mathrm{O}(1)\times\mathrm{O}(2q-2r))$ factor of $L_e$ is embedded as 
\[(z,A)\mapsto(z,z,A)\] into $\mathrm{GL}(1)\times\mathrm S(\mathrm{O}(1)\times\mathrm{O}(2q-2r))$ 
where the $\mathrm S(\mathrm{O}(1)\times\mathrm{O}(2q-2r))$ factor is included in the $\mathrm{SO}(2q-2r+1)$ factor of L,
the $\mathrm{GL}(r)$ factor of $L_e$ is embedded skew-diagonally
\[B\mapsto(B,B^{-1})\]
into the $\mathrm{GL}(r)\times\mathrm{GL}(r)$ factor of $L$.
The quotient $\mathrm{Lie}\,Q^\mathrm u/\mathrm{Lie}\,K_e^\mathrm u$ is a simple $L_e$-module of dimension $r$ as follows. In $\mathfrak k(1)$ there are exactly two simple $L_e$-submodules, $W_0,W_1$, of highest weight $\omega_{r-1}$ w.r.t.\ the $\mathrm{GL}(r)$ factor, isomorphic as $L_e$-modules but lying in two distinct isotypical $L$-components. Let $V$ be the $L_e$-complement of $W_0\oplus W_1$ in $\mathrm{Lie}\,Q^\mathrm u$. As $L_e$-module, $\mathrm{Lie}\,K_e^\mathrm u$ is the direct sum of $V$ and a simple $L_e$-submodule of $W_0\oplus W_1$ which projects non-trivially on both summands $W_0$ and $W_1$.

\subsubsection{$\mathbf{(-3,+2^{2r},+1^{2p-2r},-1^{2q-1-2r})}$}\label{sss:BBbay}

This case can be obtained from the case \ref{sss:BBaby} by switching the role of $p$ and $q$.

\subsection{$\mathfrak{so}(2p+2q)/\mathfrak{so}(2p) + \mathfrak{so}(2q)$}

$K=\mathrm{SO}(2p)\times\mathrm{SO}(2q)$, $p,q\geq3$, $\mathfrak p=V(\omega_1+\omega'_1)$. 
If $p=2$ and $q\geq3$, $\mathfrak p=V(\omega+\omega'+\omega''_1)$.
If $p\geq3$ and $q=2$, $\mathfrak p=V(\omega_1+\omega'+\omega'')$.
If $p=2$ and $q=2$, $\mathfrak p=V(\omega+\omega'+\omega''+\omega''')$.

Let us fix a basis $e_1,\ldots,e_p,e_{-p},\ldots,e_{-1}$ of $\mathbb C^{2p}$ and a symmetric bilinear form $\beta$ such that $\beta(e_i,e_j)=\delta_{i,-j}$ for all $i,j$. Similarly, let us fix a basis $e'_1,\ldots,e'_q,e'_{-q},\ldots,e'_{-1}$ of $\mathbb C^{2q}$ and a symmetric bilinear form $\beta'$ such that $\beta'(e'_i,e'_j)=\delta_{i,-j}$ for all $i,j$. Then $K=\mathrm{SO}(\mathbb C^{2p},\beta)\times\mathrm{SO}(\mathbb C^{2q},\beta')$ and 
\[\mathfrak p=\mathbb C^{2p}\otimes\mathbb C^{2q}.\] 

Let us denote by $\tau$ the linear endomorphism of $\mathbb C^{2p+2q}$ switching $e_p$ and $e_{-p}$ and fixing all the other basis vectors. Similarly, let  us denote by $\tau'$ the linear endomorphism of $\mathbb C^{2p+2q}$ switching $e'_q$ and $e'_{-q}$ and fixing all the other basis vectors. The conjugation by $\tau$ (and by $\tau'$, respectively) is an involutive external automorphism of $\mathfrak g$, living $\mathfrak k$ and $\mathfrak p$ stable, and inducing the nontrivial involution of the first (the second, respectively) connected component of the Dynkin diagram of $\mathfrak k$.

\subsubsection{$\mathbf{(+2^{2r},+1^{2p-2r},-1^{2q-2r})}$, $r \geq 1$}\label{sss:DDaa}

If $r<p$ and $r<q$,
\[e=\sum_{i=1}^re_i\otimes e'_{r-i+1},\qquad
f=-\sum_{i=1}^re_{-r+i-1}\otimes e'_{-i},\] 
\[h(e_i)=\left\{\begin{array}{cl}
e_i & \mbox{if $1\leq i\leq r$}\\
-e_i & \mbox{if $-r\leq i\leq -1$}\\
0 & \mbox{otherwise}
\end{array}\right.,\quad
h(e'_i)=\left\{\begin{array}{cl}
e'_i & \mbox{if $1\leq i\leq r$}\\
-e'_i & \mbox{if $-r\leq i\leq -1$}\\
0 & \mbox{otherwise}
\end{array}\right..\]
Let $Q=L\,Q^\mathrm u$ be the corresponding parabolic subgroup of $K$,
so that $L=K_h\cong\mathrm{GL}(r)\times\mathrm{SO}(2p-2r)\times\mathrm{GL}(r)\times\mathrm{SO}(2q-2r)$.

The centralizer of $e$ is $K_e=L_eQ^\mathrm u$ where
$L_e\cong\mathrm{GL}(r)\times\mathrm{SO}(2p-2r)\times\mathrm{SO}(2q-2r)$,
the $\mathrm{GL}(r)$ factor of $L_e$ is embedded skew-diagonally, $A\mapsto(A,A^{-1})$, into the $\mathrm{GL}(r)\times\mathrm{GL}(r)$ factor of $L$.

If $r=p$ and $r<q$, there exist two orbits labelled I and II. 
The case (I) can be described as above by specializing $r$ equal to $p$.
The case (II) can be obtained from the case (I) conjugating by $\tau$.

If $r<p$ and $r=q$, there exist two orbits labelled I and II. 
The case (I) can be described as above by specializing $r$ equal to $q$.
The case (II) can be obtained from the case (I) conjugating by $\tau'$.

If $r=p=q$, there exist four orbits with a double label I or II. 
The case (I,I) can be described as above by specializing $r$ equal to $p=q$.
The case (I,II) can be obtained from the case (I,I) conjugating by $\tau'$.
The case (II,I) can be obtained from the case (I,I) conjugating by $\tau$.
The case (II,II) can be obtained from the case (I,I) conjugating by $\tau$ and $\tau'$.

\subsubsection{$\mathbf{(+3,+2^{2r},+1^{2p-2-2r},-1^{2q-1-2r})}$}\label{sss:DDady}

If $r\leq p-2$ and $r\leq q-2$,
\[e=e_1\otimes (e'_{q}+e'_{-q})+\sum_{i=1}^re_{i+1}\otimes e'_{r-i+1},\quad
f=-\big(\sum_{i=1}^re_{-r+i-2}\otimes e'_{-i}\big)-e_{-1}\otimes(e'_q+e'_{-q}),\] 
\[h(e_i)=\left\{\begin{array}{cl}
2e_i & \mbox{if $i=1$}\\
e_i & \mbox{if $2\leq i\leq r+1$}\\
-e_i & \mbox{if $-r-1\leq i\leq -2$}\\
-2e_i & \mbox{if $i=-1$}\\
0 & \mbox{otherwise}
\end{array}\right.,\quad
h(e'_i)=\left\{\begin{array}{cl}
e'_i & \mbox{if $1\leq i\leq r$}\\
-e'_i & \mbox{if $-r\leq i\leq -1$}\\
0 & \mbox{otherwise}
\end{array}\right..\]
Let $Q=L\,Q^\mathrm u$ be the corresponding parabolic subgroup of $K$,
so that $L=K_h\cong\mathrm{GL}(1)\times\mathrm{GL}(r)\times\mathrm{SO}(2p-2r-2)
\times\mathrm{GL}(r)\times\mathrm{SO}(2q-2r)$.

The centralizer of $e$ is $K_e=L_eK_e^\mathrm u$ where
$L_e\cong\mathrm{GL}(r)\times\mathrm{SO}(2p-2r-2)\times\mathrm S(\mathrm{O}(1)\times\mathrm{O}(2q-2r-1))$,
the $\mathrm S(\mathrm{O}(1)\times\mathrm{O}(2p-2r-1))$ factor of $L_e$ is embedded as 
\[(z,A)\mapsto(z,z,A)\] into $\mathrm{GL}(1)\times\mathrm S(\mathrm{O}(1)\times\mathrm{O}(2q-2r-1))$ 
where the $\mathrm S(\mathrm{O}(1)\times\mathrm{O}(2q-2r-1))$ factor 
is included in the $\mathrm{SO}(2q-2r)$ factor of L,
the $\mathrm{GL}(r)$ factor of $L_e$ is embedded skew-diagonally
\[B\mapsto(B,B^{-1})\]
into the $\mathrm{GL}(r)\times\mathrm{GL}(r)$ factor of $L$.
The quotient $\mathrm{Lie}\,Q^\mathrm u/\mathrm{Lie}\,K_e^\mathrm u$ is a simple $L_e$-module of dimension $r$ as follows. In $\mathfrak k(1)$ there are exactly two simple $L_e$-submodules, $W_0,W_1$, of highest weight $\omega_{r-1}$ w.r.t.\ the $\mathrm{GL}(r)$ factor, isomorphic as $L_e$-modules but lying in two distinct isotypical $L$-components. Let $V$ be the $L_e$-complement of $W_0\oplus W_1$ in $\mathrm{Lie}\,Q^\mathrm u$. As $L_e$-module, $\mathrm{Lie}\,K_e^\mathrm u$ is the direct sum of $V$ and a simple $L_e$-submodule of $W_0\oplus W_1$ which projects non-trivially on both summands $W_0$ and $W_1$.

If $r\leq p-2$ and $r=q-1$, 
the normal triple $h,e,f$, the parabolic subgroup $Q=L\,Q^\mathrm u$ 
and $L_e$ have the same description, with $K_e=L_eK_e^\mathrm u$.  
The quotient $\mathrm{Lie}\,Q^\mathrm u/\mathrm{Lie}\,K_e^\mathrm u$ remains a simple $L_e$-module of dimension $q-1$ but here in $\mathfrak k(1)$ there are exactly three simple $L_e$-submodules, $W_0,W_1,W_2$, of highest weight $\omega_{q-2}$ w.r.t.\ the $\mathrm{GL}(q-1)$ factor, isomorphic as $L_e$-modules but lying in three distinct isotypical $L$-components. Let $V$ be the $L_e$-complement of $W_0\oplus W_1\oplus W_2$ in $\mathrm{Lie}\,Q^\mathrm u$. As $L_e$-module, $\mathrm{Lie}\,K_e^\mathrm u$ is the direct sum of $V$ and a co-simple $L_e$-submodule of $W_0\oplus W_1\oplus W_2$ which projects non-trivially on every summand $W_0$, $W_1$ and $W_2$.

If $r=p-1$, there exist two orbits labelled I and II. 
The case (I) can be described as above by specializing $r$ equal to $p-1$.
The case (II) can be obtained from the case (I) conjugating by $\tau$.

\subsubsection{$\mathbf{(-3,+2^{2r},+1^{2p-1-2r},-1^{2q-2-2r})}$}\label{sss:DDday}

This case can be obtained from the case \ref{sss:DDady} by switching the role of $p$ and $q$.

\section{Tables of spherical nilpotent $K$-orbits  in $\mathfrak p$ in the classical non-Hermitian cases}\label{B}

Let $e \in \mathcal N_{\mathfrak p}$ and let $\{h,e,f\}$ be a normal triple containing it. The action of the semisimple element $h$ on $\mathfrak g$ induces a $\mathbb Z$-grading $\mathfrak g = \bigoplus_{i \in \mathbb Z} \mathfrak g(i)$, where we denote $\mathfrak g(i) = \{x \in \mathfrak g \ : \ [h,x] = ix \}$. This defines the \textit{height} of $e$ (which actually depends only on $Ge$), defined as $\height(e) = \max\{i \ : \ \mathfrak g(i) \neq 0\}$. By \cite[Theorem~2.6]{Pa1}, the orbit $Ge$ is spherical if and only if $\height(e) \leq 3$.

Similarly, one may consider the action of $h$ on $\mathfrak p$, and the corresponding $\mathbb Z$-grading $\mathfrak p = \bigoplus_{i \in \mathbb Z} \mathfrak p(i)$, where $\mathfrak p(i) = \mathfrak p \cap \mathfrak g(i)$. This defines the $\mathfrak p$-\textit{height} of $e$ (which actually depends only on $Ke$), defined as $\height_{\mathfrak p}(e) = \max\{i \ : \ \mathfrak p(i) \neq 0\}$. By \cite[Theorems~5.1 and 5.6]{Pa1}, the orbit $Ke$ is spherical if $\height(e) \leq 3$, whereas if $Ke$ is spherical then $\height(e) \leq 4$ and $\height_{\mathfrak p}(e) \leq 3$. Similarly to the adjoint case, $\overline{Ke}$ is normal if $\height_{\mathfrak p}(e) = 2$, in which case Hesselink's proof \cite{He} of the normality of the closure of a nilpotent adjoint $G$-orbit of height 2 essentially applies (see \cite[Proposition 2.1]{Ki06}).

In Tables~2--11, for every spherical orbit $Ke \subset \mathcal N_{\mathfrak p}$, we report its signed partition (column~2), the Kostant-Dynkin diagram and the height of $Ge$ (columns~3 and 4), the Kostant-Dynkin diagram and the $\mathfrak p$-height of $Ke$ (columns~5 and 6), the normality of $\ol{Ke}$ (column~7), the codimension of $\ol{Ke} \smallsetminus Ke$ in $\ol{Ke}$ (column~8) and the weight semigroup of $\wt{Ke}$ (column~9).

In the orthogonal cases, the generators of the weight semigroups given in the tables are expressed in terms of the following variation of the fundamental weights of an irreducible root system $R$
$$
\varpi_i = \left\{
	\begin{array}{ll}
	2\gro_i & \text{if } i = n, \; R = \sfB_n\\
	\gro_{n-1} + \gro_n & \text{if } i = n-1, n, \; R = \sfD_n\\
	\gro_i & \text{otherwise}
	\end{array} \right. \\
$$
and we set $\varpi_0 = 0$. 

In all cases with a Roman numeral, (I) or (II), one $K$-orbit is obtained from the other one by applying an involutive automorphism of a factor of $K$ of type $\mathsf D$. In some of these cases, the generators of $\grG(\wt{Ke})$ are given just for the $K$-orbit labeled with (I), the generators for the other one are obtained by switching  $\gro_{p-1}$ and $\gro_p$ (resp.\ $\gro'_{q-1}$ and $\gro'_q$) if the first (resp.\ the second) component is the one involved by the mentioned automorphism. Which is the component that is involved by the automorphism is clear from the Kostant-Dynkin diagrams of the two orbits.

In Tables~12--20, for every spherical nilpotent orbit $Ke$ in $\gop$, we report the Luna diagram and the set of spherical roots of the spherical system of $K[e]$.

For every family of $K$-orbits, we draw the Luna diagram as it looks like for values of the parameters $n,p,q$ big enough with respect to $r$. When $r$ becomes close to $n$, $p$ or $q$ the diagram may change. Let us explain how it changes.

Whenever $K$ has a factor of type $\mathsf D_t$, where the diagram ends with 
\[\begin{picture}(8100,2400)(-1500,-1200)
\put(0,0){\usebox{\edge}}
\put(1800,0){\usebox{\susp}}
\put(5400,0){\usebox{\bifurc}}
\put(0,0){\usebox{\wcircle}}
\thicklines
\put(-1500,0){\line(1,0){1500}}
\end{picture}\]
(the corresponding simple root $\alpha_s$ moving a color of type $\mathrm b$, 
with $\alpha_{s+1},\ldots,\alpha_t$ belonging to $S^\mathrm p$) as in case~3.1, the given diagram is for $s<t-1$. 
If $s=t-1$, both the simple roots $\alpha_{t-1}$ and $\alpha_t$ move colors of type $\mathrm b$.
If $s=t$, with Roman numeral (I), $\alpha_{t-1}\in\supp\Sigma$ and $\alpha_t$ moves a color of type $\mathrm b$.
If $s=t$, with Roman numeral (II), $\alpha_{t}\in\supp\Sigma$ and $\alpha_{t-1}$ moves a color of type $\mathrm b$.
For example, the diagram of the case~3.1 becomes\\ 
if $r=n-1$
\[\begin{picture}(9000,3000)(-300,-1500)
\multiput(0,0)(5400,0){2}{\usebox{\edge}}
\put(1800,0){\usebox{\susp}}
\put(7200,0){\usebox{\bifurc}}
\multiput(0,0)(5400,0){2}{\multiput(0,0)(1800,0){2}{\usebox{\aprime}}}
\multiput(8400,-1200)(0,2400){2}{\usebox{\wcircle}}
\end{picture}\]
if $r=n$ (I)
\[\begin{picture}(9000,2700)(-300,-1500)
\multiput(0,0)(5400,0){2}{\usebox{\edge}}
\put(1800,0){\usebox{\susp}}
\put(7200,0){\usebox{\bifurc}}
\multiput(0,0)(5400,0){2}{\multiput(0,0)(1800,0){2}{\usebox{\aprime}}}
\put(8400,1200){\usebox{\aprime}}
\put(8400,-1200){\usebox{\wcircle}}
\end{picture}\]
if $r=n$ (II)
\[\begin{picture}(9000,3600)(-300,-2100)
\multiput(0,0)(5400,0){2}{\usebox{\edge}}
\put(1800,0){\usebox{\susp}}
\put(7200,0){\usebox{\bifurc}}
\multiput(0,0)(5400,0){2}{\multiput(0,0)(1800,0){2}{\usebox{\aprime}}}
\put(8400,-1200){\usebox{\aprime}}
\put(8400,1200){\usebox{\wcircle}}
\end{picture}\]

Whenever $K$ has a factor of type $\mathsf D_t$, where the diagram ends with a tail
\[\begin{picture}(8100,2400)(-1500,-1200)
\put(0,0){\usebox{\edge}}
\put(1800,0){\usebox{\susp}}
\put(5400,0){\usebox{\bifurc}}
\put(0,0){\usebox{\gcircle}}
\thicklines
\put(-1500,0){\line(1,0){1500}}
\end{picture}\]
(the corresponding simple root $\alpha_s$ moving a color of type $\mathrm b$, 
with $\alpha_{s+1},\ldots,\alpha_t$ belonging to $S^\mathrm p$ 
and $2(\alpha_s+\ldots+\alpha_{t-2})+\alpha_{t-1}+\alpha_t$ belonging to $\Sigma$) as in case~7.2, 
the given diagram is for $s<t-1$. 
If $s=t-1$, the simple roots $\alpha_{t-1}$ and $\alpha_t$ move the same color of type $\mathrm b$
($\alpha_{t-1}+\alpha_t$ is a spherical root).
For example, the diagram of the case~7.2 for $r=q-2$ becomes
\[\begin{picture}(24750,3600)(-300,-1800)
\multiput(0,0)(17100,0){2}{
\put(0,0){\usebox{\edge}}
\multiput(0,0)(1800,0){2}{\usebox{\aone}}
\put(5400,0){\usebox{\aone}}
}
\multiput(1800,0)(7200,0){2}{\usebox{\susp}}
\put(18900,0){\usebox{\susp}}
\multiput(5400,0)(1800,0){2}{\usebox{\edge}}
\put(7200,0){\usebox{\wcircle}}
\put(12600,0){\usebox{\rightbiedge}}
\put(22500,0){\usebox{\bifurc}}
\multiput(23700,-1200)(0,2400){2}{\usebox{\wcircle}}
\multiput(24000,-1200)(0,2400){2}{\line(1,0){450}}
\put(24450,-1200){\line(0,1){2400}}
\multiput(0,900)(17100,0){2}{\line(0,1){900}}
\put(0,1800){\line(1,0){17100}}
\multiput(1800,900)(17100,0){2}{\line(0,1){600}}
\put(1800,1500){\line(1,0){15200}}
\put(17200,1500){\line(1,0){1700}}
\multiput(5400,900)(17100,0){2}{\line(0,1){300}}
\put(5400,1200){\line(1,0){11600}}
\put(17200,1200){\line(1,0){1600}}
\put(19000,1200){\line(1,0){3500}}
\multiput(1800,-900)(15300,0){2}{\line(0,-1){900}}
\put(1800,-1800){\line(1,0){15300}}
\multiput(3600,-1500)(0,300){3}{\line(0,1){150}}
\put(3600,-1500){\line(1,0){13400}}
\put(17200,-1500){\line(1,0){1700}}
\put(18900,-1500){\line(0,1){600}}
\put(5400,-900){\line(0,-1){300}}
\put(5400,-1200){\line(1,0){11600}}
\put(17200,-1200){\line(1,0){1600}}
\put(19000,-1200){\line(1,0){1700}}
\multiput(20700,-1200)(0,300){2}{\line(0,1){150}}
\multiput(0,600)(1800,0){2}{\usebox{\toe}}
\multiput(18900,600)(3600,0){2}{\usebox{\tow}}
\end{picture}\]

Whenever $K$ has a factor of type $\mathsf B_t$, where the diagram ends with a tail
\[\begin{picture}(8700,1800)(-1500,-900)
\put(0,0){\usebox{\edge}}
\put(1800,0){\usebox{\susp}}
\put(5400,0){\usebox{\rightbiedge}}
\put(0,0){\usebox{\gcircletwo}}
\thicklines
\put(-1500,0){\line(1,0){1500}}
\end{picture}\]
(the corresponding simple root $\alpha_s$ moving a color of type $\mathrm b$, 
with $\alpha_{s+1},\ldots,\alpha_t$ belonging to $S^\mathrm p$ 
and $2(\alpha_s+\ldots+\alpha_{t})$ belonging to $\Sigma$) as in case~7.3, 
the given diagram is for $s<t$. 
If $s=t$, the simple root $\alpha_t$ moves a color of type $2\mathrm a$
($2\alpha_t$ is a spherical root).
For example, the diagram of the case~7.3 for $r=p-1$ becomes
\[\begin{picture}(24000,3600)(-300,-1800)
\put(0,0){\usebox{\edge}}
\put(1800,0){\usebox{\susp}}
\multiput(0,0)(1800,0){2}{\usebox{\aone}}
\put(5400,0){\usebox{\aone}}
\put(9900,0){
\put(0,0){\usebox{\edge}}
\multiput(1800,0)(7200,0){2}{\usebox{\susp}}
\multiput(5400,0)(1800,0){2}{\usebox{\edge}}
\multiput(0,0)(1800,0){2}{\usebox{\aone}}
\put(5400,0){\usebox{\aone}}
}
\put(7200,0){\usebox{\aprime}}
\put(5400,0){\usebox{\rightbiedge}}
\put(-7200,0){
\put(24300,0){\usebox{\wcircle}}
\put(29700,0){\usebox{\bifurc}}
}
\multiput(0,900)(9900,0){2}{\line(0,1){900}}
\put(0,1800){\line(1,0){9900}}
\multiput(1800,900)(9900,0){2}{\line(0,1){600}}
\put(1800,1500){\line(1,0){8000}}
\put(10000,1500){\line(1,0){1700}}
\multiput(5400,900)(9900,0){2}{\line(0,1){300}}
\put(5400,1200){\line(1,0){4400}}
\put(10000,1200){\line(1,0){1600}}
\put(11800,1200){\line(1,0){3500}}
\multiput(0,-900)(11700,0){2}{\line(0,-1){900}}
\put(0,-1800){\line(1,0){11700}}
\put(1800,-900){\line(0,-1){600}}
\put(1800,-1500){\line(1,0){9800}}
\put(11800,-1500){\line(1,0){1700}}
\multiput(13500,-1500)(0,300){3}{\line(0,1){150}}
\multiput(3600,-1200)(0,300){2}{\line(0,1){150}}
\put(3600,-1200){\line(1,0){8000}}
\put(11800,-1200){\line(1,0){1600}}
\put(13600,-1200){\line(1,0){1700}}
\put(15300,-1200){\line(0,1){300}}
\multiput(1800,600)(3600,0){2}{\usebox{\tow}}
\multiput(9900,600)(1800,0){2}{\usebox{\toe}}
\end{picture}\]


\includepdf[fitpaper,pages=-,landscape]{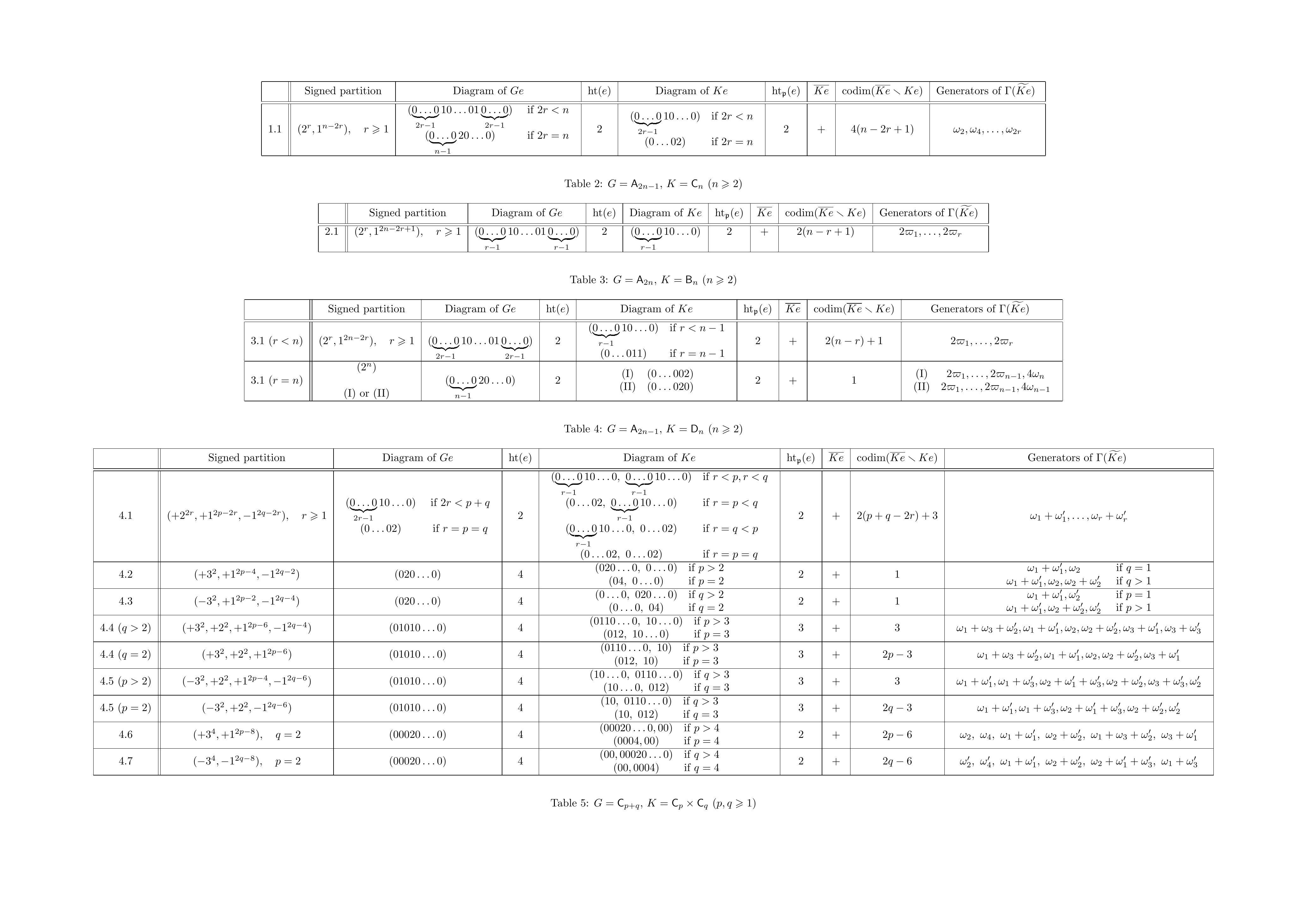}
\includepdf[fitpaper,pages=-,landscape]{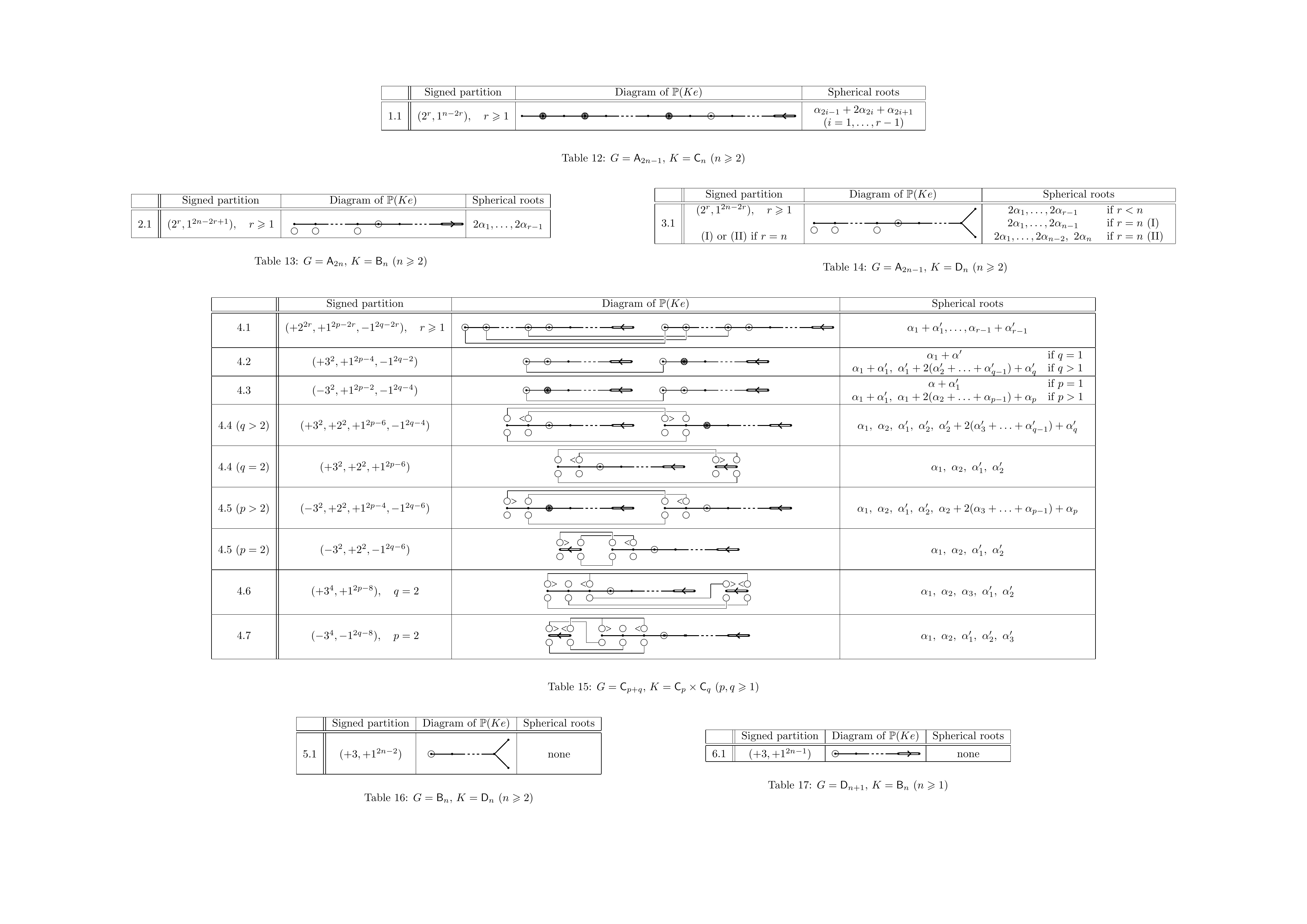}


\begin{thebibliography}{99}

\bibitem{AHV}
J.~Adams, J.-S.~Huang and D.A.~Vogan Jr.,
{\em Functions on the model orbit in $\mathsf{E}_8$}, 
\newblock Represent.\ Theory \textbf{2} (1998), 224--263.


\bibitem{Bi}
{ B.~Binegar},
\newblock {\em On a class of multiplicity-free nilpotent $K_\mC$-orbits},
\newblock J. Math. Kyoto Univ. \textbf{47} (2007), 735--766.

\bibitem{Bra}
{ P.~Bravi},
\newblock {\em Primitive spherical systems},
\newblock Trans.\ Amer.\ Math.\ Soc. \textbf{365} (2013), 361--407.

\bibitem{BGM}
P.~Bravi, J.~Gandini and A.~Maffei,
{\em Projective normality of model varieties and related results}, 
Represent.\ Theory \textbf{20} (2016), 39--93.

\bibitem{BL}
{ P.~Bravi and D.~Luna},
\newblock {\em An introduction to wonderful varieties with many examples of type ${\sf F}_4$},
\newblock J.\ Algebra \textbf{329} (2011), 4--51.

\bibitem{BP5}
{P.~Bravi and G.~Pezzini},
{\em Wonderful varieties of type ${\sf D}$},
Represent.\ Theory \textbf 9 (2005), 578--637.

\bibitem{BP}
{P.~Bravi and G.~Pezzini},
{\em Wonderful subgroups of reductive groups and spherical systems},
J.\ Algebra \textbf{409} (2014), 101--147.

\bibitem{BP15}
{P.~Bravi and G.~Pezzini},
{\em The spherical systems of the wonderful reductive subgroups}, 
J.\ Lie Theory \textbf{25} (2015), 105--123.

\bibitem{BP16}
{P.~Bravi and G.~Pezzini},
{\em Primitive wonderful varieties}, 
Math.\ Z., doi:10.1007/s00209-015-1578-5~.




\bibitem{CLM}
R.~Chiriv\`i, P.~Littelmann and A. Maffei,
\emph{Equations defining symmetric varieties and affine Grassmannians},
Int. Math. Res. Not. \textbf{2009}, {291--347}.


\bibitem{CM_projective-normality}
R.~Chiriv\`\i\ and A.~Maffei,
{\em  Projective normality of complete symmetric varieties},
Duke Math.\ J. \textbf{122} (2004), 93--123.


\bibitem{CoMG}
D.H.~Collingwood and W.M.~McGovern,
Nilpotent Orbits in Semisimple Lie Algebras,
Van Nostrand Reinhold Co., New York, 1993.

\bibitem{CF}
S.~Cupit-Foutou, 
\emph{Wonderful varieties: A geometrical realization},  arXiv:0907.2852v4 [math.AG]~.


\bibitem{D87}
D.\v Z.~\makebox[0pt]{\rule{3pt}{0pt}\rule[4pt]{3pt}{0.8pt}}Dokovi\'c,
{\em Proof of a conjecture of Kostant},
 Trans.\ Amer.\ Math.\ Soc. \textbf{302} (1987), 577--585.


\bibitem{Ga}
{ J.~Gandini},
\newblock{\em Spherical orbit closures in simple projective spaces and their normalizations},
\newblock Transform.\ Groups \textbf{16} (2011), 109--136. 

\bibitem{He}
W.~Hesselink,
{\em The normality of closures of orbits in a Lie algebra}, 
Comment.~Math.~Helv. \textbf{54} (1979), 105--110. 

\bibitem{HL}
J.-S.~Huang and J.S.~Li,
{\em Unipotent representations attached to spherical nilpotent orbits},
Amer.~J.~Math. \textbf{121} (1999), 497--517.

\bibitem{Ki02}
D.R.~King,
{\em Spherical nilpotent orbits and the Kostant-Sekiguchi correspondence}, 
Trans.\ Amer.\ Math.\ Soc.\ \textbf{354} (2002), 4909--4920. 

\bibitem{Ki04}
D.R.~King,
{\em Classification of spherical nilpotent orbits in complex symmetric space},
J.\ Lie Theory \textbf{14} (2004), 339--370.

\bibitem{Ki06}
D.R.~King,
{\em Small spherical nilpotent orbits and $K$-types of Harish Chandra modules},
arXiv:0701034 [math.RT].


\bibitem{KR}
B.~Kostant and S.~Rallis,
{\em Orbits and representations associated with symmetric spaces},
Amer.\ J.\ Math. \textbf{93} (1971), 753--809. 


\bibitem{KP2}
H.~Kraft and C.~Procesi,
{\em On the geometry of conjugacy classes in classical groups},
Comment.\ Math.\ Helv. \textbf{57} (1982), no. 4, 539--602. 


\bibitem{luna_typeA}
{ D.~Luna},
\newblock {\em Vari\'et\'es sph\'eriques de type ${\sf A}$},
\newblock  Publ.\ Math.\ Inst.\ Hautes \'Etudes Sci.\ \textbf{94} (2001), 161--226.

\bibitem{Ni1}
K.~Nishiyama,
\textit{Multiplicity-free actions and the geometry of nilpotent orbits},
Math. Ann. \textbf{318} (2000), 777--793.

\bibitem{Ni2}
K~Nishiyama,
\textit{Classification of spherical nilpotent orbits for $\mathrm{U}(p,p)$},
J.~Math.~Kyoto~Univ. \textbf{44} (2004), 203--215.

\bibitem{NOZ}
K.~Nishiyama, H.~Ochiai and C.-B.~Zhu,
\textit{Theta lifting of nilpotent orbits for symmetric pairs},
Trans. Amer. Math. Soc. \textbf{358} (2006), 2713--2734.


\bibitem{Pa1}
D.I.~Panyushev,
{\em On spherical nilpotent orbits and beyond},
Ann.\ Inst.\ Fourier (Grenoble) \textbf{49} (1999), no. 5, 1453--1476. 

\bibitem{Pa2}
D.I.~Panyushev,
{\em Some amazing properties of spherical nilpotent orbits},
Math.\ Z. \textbf{245} (2003), no. 3, 557--580.

\bibitem{Sab}
H.~Sabourin,
{\em Orbites nilpotentes sph\'eriques et repr\'esentations unipotentes associ\'ees: le cas $\mathfrak{sl}_n$},
Represent.\ Theory \textbf{9} (2005), 468--506.


\bibitem{S87}
J.~Sekiguchi,
{\em Remarks on nilpotent orbits of a symmetric pair},
J.\ Math.\ Soc.\ Japan \textbf{39} (1987), 127--138.





\bibitem{Wo}
K.D. Wong,
\textit{Regular functions of symplectic spherical nilpotent orbits and their quantizations},
Represent.\ Theory \textbf{19} (2015), 333--346.

\bibitem{Ya}
L.~Yang,
\textit{On the quantization of spherical nilpotent orbits},
Trans. Amer. Math. Soc. \textbf{365} (2013), 6499--6515.

\end{thebibliography}
\end{document}